\newtheorem{swtheorem}{Theorem}
\theoremstyle{plain}
\newtheorem{thm}{Theorem}[section]
\newtheorem{theorem}[thm]{Theorem}
\newtheorem{lemma}[thm]{Lemma}
\newtheorem{conjec}[thm]{Conjecture}
\newtheorem{prop}[thm]{Proposition}
\newtheorem{cor}[thm]{Corollary}
\theoremstyle{definition}
\newtheorem{defn}[thm]{Definition}
\newtheorem{remark}[thm]{Remark}
\newtheorem{ex}[thm]{Example}
\newtheorem{question}[thm]{Question}
\DeclareMathOperator{\cat}{\mathsf{cat}}
\DeclareMathOperator{\dcat}{\mathsf{dcat}}
\DeclareMathOperator{\s}{Susp}
\DeclareMathOperator{\cd}{{\rm cd}}
\DeclareMathOperator{\supp}{{\rm supp}}
\def\B{{\mathcal B}}
\def\A{{\mathcal A}}
\def\M{{\mathcal M}}
\def\P{{\mathcal P}}
\def\H{{\mathcal H}}
\def\F{{\mathcal F}}
\def\O{{\mathcal O}}
\newcommand \pa[2]{\frac{\partial #1}{\partial #2}}
\def\scr{\mathcal}
\def\A{{\scr A}}
\def\B{{\scr B}}
\def\O{{\scr O}}
\def\C{{\mathbb C}}
\def\Z{{\mathbb Z}}
\def\Q{{\mathbb Q}}
\def\R{{\mathbb R}}
\def\N{{\mathbb N}}
\def\1{\hbox{\rm\rlap {1}\hskip.03in{\rom I}}}
\def\Bbbone{{\rm1\mathchoice{\kern-0.25em}{\kern-0.25em}
{\kern-0.2em}{\kern-0.2em}I}}
\def\pa{\partial}
\def\wt{\widetilde}
\def\wh{\widehat}
\def\ov{\overline}
\long\def\forget#1\forgotten{}
\newcommand\ver[1]{\marginpar{\tiny Changed in Ver \VER}}
\date{\today}
\begin{document}

\title[Distributional category of manifolds]{Distributional category of manifolds}

\author[Ekansh Jauhari]{Ekansh Jauhari}

\address{Ekansh Jauhari, Department of Mathematics, University
of Florida, 358 Little Hall, Gainesville, FL 32611-8105, USA.}
\email{ekanshjauhari@ufl.edu}

\subjclass[2020]
{Primary 55M30; Secondary 55S35, 53C23, 57N65, 53D05, 57M10.}

\keywords{}

\begin{abstract}
Recently, a new homotopy invariant of metric spaces, called the distributional category, was defined, which provides a lower bound to the Lusternik--Schnirelmann (LS) category. In this paper, we obtain several sufficient conditions for the distributional category ($\dcat$) of a closed manifold to be maximum, i.e., equal to its classical LS-category ($\cat$). These give us many new computations of $\dcat$, especially for some essential manifolds and (generalized) connected sums. In the process, we also determine the $\dcat$ of closed $3$-manifolds having torsion-free fundamental groups and various closed geometrically decomposable $4$-manifolds. Finally, we extend some of our results to closed Alexandrov spaces with curvature bounded below and discuss their $\cat$ and $\dcat$ in dimension $3$. 
\end{abstract}



\keywords{Aspherical space, distributional category, connected sum, essential manifold, Lusternik--Schnirelmann category, Alexandrov space.}

\maketitle

\section{Introduction}\label{Introduction}
Given a continuous map $f:X \to Y$, the \emph{Lusternik--Schnirelmann (LS) category of} $f$, denoted $\cat(f)$, is the smallest integer $n$ such that $X$ can be covered by $n+1$ number of open sets $U_i$ for each of which the restriction $f_{|U_i}$ is null-homotopic, see~\cite{BG},~\cite{Ja}. When $f$ is the identity map, we recover the classical notion of the \emph{LS-category of a space} $X$, denoted $\cat(X)$, see~\cite{Ja},~\cite{CLOT}.

The LS-category of closed manifolds has been well-studied,~\cite{GLGA},~\cite{RO},~\cite{R1}, \cite{OR},~\cite{CLOT},~\cite{KR},~\cite{DKR},~\cite{OS},~\cite{DS}. Gómez-Larrañaga and González-Acuña completely described in~\cite{GLGA} the LS-category of closed $3$-manifolds in terms of their fundamental groups. This was supplemented by the work of Oprea and Rudyak who used a different method to obtain the same description in~\cite{OR}. For the connected sum of two closed $n$-manifolds $M$ and $N$ for any $n \ge 1$, Dranishnikov and Sadykov proved in~\cite{DS} the formula $\cat(M\#N) = \max\{\cat(M),\cat(N)\}$. 

Recently, in a joint work with Dranishnikov~\cite{DJ}, we defined a new homotopy invariant of spaces, called the distributional category, denoted $\dcat(X)$ for a space $X$ (see Section~\ref{lastnew} for its motivation). In general, $\dcat(X) \le \cat(X)$. The invariant $\dcat$ satisfies most of the nice properties as $\cat$. However, $\dcat$ is a different notion than $\cat$ and has various contrasting properties as well (see Section~\ref{pre1} for the details). Unlike the case of the classical LS-category, the distributional category is known only for a few classes of spaces~\cite{DJ},~\cite{KW},~\cite{Dr2}, and there are a limited number of tools available for its computation.

Since the inception of this nascent invariant whose research is an emerging topic, the following tantalizing question has been around.

\begin{question}\label{myques}
For what spaces $X$ do we have the equality $\dcat(X)=\cat(X)$?
\end{question}

To answer the above question, it is worthwhile to check if the techniques that are typically useful in determining LS-category remain somewhat useful in determining distributional category as well. One such technique is lower bounding the categories of a finite CW complex with its rational cup-length,~\cite{DJ}. Another example is the Eilenberg--Ganea technique~\cite{EG} of expressing the categories of the classifying space of a torsion-free discrete group in terms of the cohomological dimension of the group,~\cite{KW}. 

As a result of these techniques, some spaces on which the distributional category agrees with the LS-category include closed surfaces (other than $\R P^2$), complex projective spaces, finite products of spheres, and classifying spaces of torsion-free discrete groups. Spaces on which the two notions of category don't agree include real projective spaces and classifying spaces of finite groups,~\cite{DJ},~\cite{KW}. These examples make Question~\ref{myques} very interesting. 

The main objective of this paper is to answer Question~\ref{myques} by finding general classes of compact metric spaces (in particular, closed manifolds) on which the two notions of category coincide. We do this by presenting various sufficient conditions for the distributional category of a closed manifold to be equal to its classical LS-category. We show that careful use of some obstruction-theoretic methods turns out to be very helpful in computing the distributional category in various cases. 

As a result of our sufficient conditions, we obtain analogs of some classical results on the LS-category of closed manifolds (especially the two results mentioned in Paragraph $2$ above) for the distributional category. Our techniques also enable us to partially extend some of these sufficient conditions to a more general class of closed Alexandrov spaces with curvature bounded below,~\cite{BGP},~\cite{BBI}. This helps us determine the classical LS-category of closed Alexandrov spaces in some cases. 

Let us describe our main results and methods more precisely.

For closed essential manifolds with torsion-free fundamental groups, we define the notion of \emph{cap property} in Section~\ref{torsionfree}. Using this notion and a characterization of distributional category from Section~\ref{pre1}, we obtain the following.

\begin{swtheorem}\label{1}
    Let $M$ be a closed essential $n$-manifold such that $\pi_1(M)$ is torsion-free. If $M$ satisfies the cap property, then $\dcat(M)=\cat(M)=n$.
\end{swtheorem}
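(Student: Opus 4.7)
The plan is to establish the theorem via two inequalities, one of which is nearly automatic. From the general comparison $\dcat(X) \le \cat(X)$ recalled in Section~\ref{pre1}, together with the dimension bound $\cat(M) \le n$ for any closed $n$-manifold, I immediately get $\dcat(M) \le n$. On the other side, essentiality of $M$ already forces $\cat(M) \ge n$ by a classical argument using the classifying map $u\colon M \to B\pi$ (where $\pi = \pi_1(M)$) and the nontriviality of $u_*[M]$. So $\cat(M) = n$ comes for free, and the substance of the theorem lies entirely in the lower bound $\dcat(M) \ge n$.

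For this lower bound, I would combine the cap property with the characterization of $\dcat$ provided in Section~\ref{pre1}, which presumably converts the question into one concerning the nonvanishing of a specific cap- or cup-type invariant. The torsion-freeness of $\pi$ ensures that $B\pi$ can be modeled by a space whose cohomology is controlled by $\cd(\pi)$, and essentiality places the relevant classes in top degree. The cap property should, by design, supply a cohomology class $\alpha$ on $B\pi$ (possibly with local coefficients) in degree $n$ such that $u^*(\alpha) \cap [M] \ne 0$. An obstruction argument, using the Section~\ref{pre1} characterization, would then show that if $\dcat(M) \le n-1$, the cap witness $u^*(\alpha) \cap [M]$ would be forced to vanish, giving the required contradiction.

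The crux and main obstacle, in my view, is the adaptation of classical obstruction theory to the distributional regime. In the ordinary $\cat$ setting, obstructions live in ordinary cohomology and detect honest null-homotopies; in the distributional setting, sections are measure-valued, so only obstructions stable under convex combinations can hope to survive. This is most likely the reason the key hypothesis is phrased as a \emph{cap} condition against the fundamental class $[M]$: cap products against $[M]$ are linear and interact predictably with averaging. Making this compatibility precise will require careful bookkeeping of the coefficient systems, in particular local systems arising from the $\pi_1(M)$-action on the fibers of the relevant Ganea-type filtration, and this step is where I expect the main technical work to concentrate. Once that is settled, the two inequalities combine to give $\dcat(M) = \cat(M) = n$, as asserted.
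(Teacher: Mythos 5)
Your high-level plan --- reduce to the lower bound $\dcat(M)\ge n$, then use the cap property together with the lift characterization of $\dcat$ --- is the same as the paper's. But the difficulty you single out as the ``crux and main obstacle'' is a red herring, and it masks the step you actually leave unspecified. There is no need to adapt obstruction theory to a ``distributional regime'' or to worry about stability under convex combinations: Proposition~\ref{lift} already recasts $\dcat(f)<n$ as an honest lift of $f$ along the Hurewicz fibration $\B_n(p_\pi)\colon P(B\pi)_n\to B\pi$, and Section~\ref{basics} records that its fiber $\B_n(\Omega B\pi)$ is $(n-2)$-connected, so classical obstruction theory applies verbatim; no averaging or new compatibility is required.

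What you do not supply is the identity and nonvanishing of the degree-$n$ class you call $\alpha$. It is precisely the primary obstruction $\kappa_n\in H^n(B\pi;\F)$ to a section of $\B_n(p_\pi)$, and it is nonzero because essentiality plus torsion-freeness of $\pi$ give $\cd(\pi)\ge n$, hence $\dcat(B\pi)\ge n$ by Theorem~\ref{eg}, hence no global section by Proposition~\ref{characterization}. Since $\dim M=n$, the pullback $f^*(\kappa_n)$ is the \emph{only} obstruction to lifting the classifying map $f$. The cap property is by definition $f_*([M])\frown\kappa_n\ne 0$; since $f$ is a $\pi_1$-isomorphism, $f_*$ is an isomorphism on $H_0$ with any local coefficients, so $f_*\bigl([M]\frown f^*(\kappa_n)\bigr)=f_*([M])\frown\kappa_n\ne 0$, forcing $f^*(\kappa_n)\ne 0$. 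Therefore no lift exists, $\dcat(f)\ge n$, and $\dcat(M)\ge\dcat(f)\ge n$. Your proposal correctly anticipates the shape of the argument but omits exactly these steps, which are where the actual (short) work lies.
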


To prove Theorem~\ref{1} and other major results, we use some primary obstructions (see details in Section~\ref{torsionfree}) that turn out to be quite useful for our purpose due to the works of Knudsen and Weinberger~\cite{KW} and Dranishnikov~\cite{Dr2}. 

Using the relationship between the orientation sheaves of a connected sum and its summands, we generalize Theorem~\ref{1} to the following.

\begin{swtheorem}\label{2}
    Let $M$ be a closed $n$-manifold and $N$ be a closed essential $n$-manifold such that $\pi_1(N)$ is torsion-free. If $N$ satisfies the cap property, then we have the equality $\dcat(M \# N) = \cat(M\# N) = n$. 
\end{swtheorem}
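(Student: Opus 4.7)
The strategy is to reduce to Theorem~\ref{1} applied to $N$, by transporting the obstruction witnessing $\dcat(N) = n$ back to $M\# N$ along the standard degree-one collapse map $c \colon M\# N \to N$ (which sends the punctured $M$-summand to a basepoint on the connecting sphere and is the identity on the rest of $M\# N$). The LS-category side of the equality is immediate: Theorem~\ref{1} gives $\cat(N) = n$, both summands have category at most $n$, and the Dranishnikov--Sadykov connected-sum formula $\cat(M\# N) = \max\{\cat(M),\cat(N)\}$ from~\cite{DS} forces $\cat(M\# N) = n$. Combined with the general inequality $\dcat \leq \cat$, this already yields the upper bound $\dcat(M\# N) \leq n$.

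For the lower bound $\dcat(M\# N) \geq n$, I would invoke the primary-obstruction characterization of distributional category recalled in Section~\ref{pre1}. The cap property of $N$ produces a nonzero primary obstruction class $\mathfrak{o}_N \in H^n(N; \mathcal{O}_N)$ whose cap product with the twisted fundamental class $[N]$ is nonzero; this nonvanishing is precisely what forces $\dcat(N) \geq n$ in the proof of Theorem~\ref{1}. I would then pull $\mathfrak{o}_N$ back along $c$ to obtain $c^*(\mathfrak{o}_N) \in H^n(M\# N;\, c^*\mathcal{O}_N)$ and argue, using naturality of the cap product together with $c_*[M\# N] = [N]$ (which holds because $c$ has degree one on twisted fundamental classes), that
\[
c_*\bigl(c^*(\mathfrak{o}_N)\cap [M\# N]\bigr) \,=\, \mathfrak{o}_N \cap [N] \,\neq\, 0.
\]
Thus $c^*(\mathfrak{o}_N) \cap [M\# N] \neq 0$, so $c^*(\mathfrak{o}_N)$ serves as a nonzero primary obstruction for $M\# N$, yielding $\dcat(M\# N) \geq n$.

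The main technical obstacle is the bookkeeping of local coefficient systems under $c$. When $M$ is non-orientable, $c^*\mathcal{O}_N$ is not literally the orientation sheaf $\mathcal{O}_{M\# N}$, since the orientation character of $M\# N$ restricts to $w_1(M)$ rather than to the trivial character on the $\pi_1(M)$-factor of $\pi_1(M\# N)\cong \pi_1(M)\ast\pi_1(N)$. One must therefore describe the orientation sheaf of a connected sum carefully, relate $\mathcal{O}_{M\# N}$ and $c^*\mathcal{O}_N$ on the $N$-summand, and verify that the cap property of $N$, after pullback along $c$, correctly feeds into the $\dcat$-obstruction machinery of Section~\ref{pre1} for $M\# N$. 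Once this sheaf-theoretic compatibility is in place, the degree-one cap-product argument above completes the proof.
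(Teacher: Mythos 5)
Your overall strategy — transport the nontriviality of the primary obstruction from the $N$-summand back to $M\# N$ along a degree-one collapse and conclude via the obstruction characterization of $\dcat$ — is exactly the approach the paper takes, so the outline is sound. You also correctly identify that the upper bound $\dcat(M\# N)\le n$ is automatic (though the dimension bound $\cat(M\# N)\le n$ alone suffices; the Dranishnikov--Sadykov formula is not needed once you have the lower bound by a sandwich argument).

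However, you have correctly flagged, and then left open, precisely the step where the work is. Your display
\[
c_*\bigl(c^*(\mathfrak{o}_N)\cap[M\#N]\bigr) \;=\; \mathfrak{o}_N\cap[N]
\]
is not literally an instance of the projection formula, because the fundamental class $[M\# N]$ lives in homology twisted by $\mathcal O_{M\# N}$, and when $M$ is non-orientable there is no natural map $\mathcal O_{M\# N}\to c^*\mathcal O_N$; so the pushforward $c_*$ in the coefficients you need is not defined at the outset. The paper gets around this not by ``relating the sheaves on the $N$-summand'' in some ad hoc way but by a specific structural device: it factors the collapse $\phi\colon M\# N\to N$ as $d\circ\psi\circ q$ through the wedge $M\vee N$, arranges the identification $(\psi\circ q)^*(\mathcal O) = \mathcal O_\#$ of orientation sheaves, computes $(\psi\circ q)_*([M\# N]) = [M]\oplus[N]$ and $d^*(g^*\kappa_n) = \alpha\oplus g^*\kappa_n$, and then reads off the nonvanishing from the $N$-component $[N]\frown g^*\kappa_n\ne 0$ supplied by the cap property. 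That wedge-sum factorization is the missing idea; without it (or an equivalent device) your cap-product line is not well-posed in twisted coefficients. Two smaller points: the obstruction class should live in $H^n(N; g^*\F)$ for the obstruction module $\F$ (it is only after capping with $[N]$ that $\mathcal O_N$ enters, via $\mathcal O_N\otimes\F$), and the final step from ``the pulled-back obstruction is nonzero'' to ``$\dcat(M\# N)\ge n$'' should explicitly route through $\dcat$ of the map $g\circ\phi$ via Propositions~\ref{lift} and~\ref{obvious}(\ref{t2}), since the class you produce obstructs a lift of that map rather than a section of $\B_n(p_{M\# N})$ over $M\# N$ itself.
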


As an application of Theorem~\ref{2}, we completely determine the distributional category of closed $3$-manifolds having torsion-free fundamental groups, thereby obtaining an analog of the main result of~\cite{GLGA} and~\cite{OR} in all but one case.

\begin{swtheorem}\label{bb}
    If $M$ is a closed $3$-manifold such that $\pi_1(M)$ is torsion-free, then $\dcat(M) = \cat(M)$. More precisely,
    \[
\dcat(M) =\begin{cases}
    1 & \text{if } \pi_1(M) = 0 \\
    2 & \text{if } \pi_1(M)\ne 0 \text{ is free} \\
    3 & \text{if } \pi_1(M) \text{ is not free but torsion-free}
\end{cases}
\]
\end{swtheorem}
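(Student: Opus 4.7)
The plan is to split on the structure of $\pi_1(M)$ and, in each case, match the value of $\cat(M)$ established by G\'omez-Larra\~naga and Gonz\'alez-Acu\~na with a corresponding lower bound for $\dcat(M)$.

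If $\pi_1(M) = 0$, the Poincar\'e conjecture forces $M \cong S^3$, for which it is standard that $\dcat(S^3) = \cat(S^3) = 1$.

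If $\pi_1(M)$ is a nontrivial free group, the sphere theorem combined with the Kneser--Milnor prime decomposition implies that $M$ is a connected sum of finitely many copies of $S^1 \times S^2$ and its twisted analogue $S^1 \widetilde{\times} S^2$. The upper bound $\dcat(M) \le \cat(M) = 2$ is immediate. For the lower bound I would first verify $\dcat = 2$ on each bundle summand: this follows from rational cup-length for $S^1 \times S^2$, while the non-orientable case needs a separate input, for instance by passing to the orientable double cover $S^1 \times S^2$ or by a twisted cup-length estimate. A monotonicity statement of the shape $\dcat(M_1 \# M_2) \ge \max\{\dcat(M_1),\dcat(M_2)\}$, which is expected to be available from an earlier section of the paper, then transfers this lower bound to the whole connected sum.

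If $\pi_1(M)$ is torsion-free but not free, geometrization together with Kneser--Milnor forces the prime decomposition of $M$ to contain at least one aspherical summand $N$: by the sphere theorem, every irreducible prime factor is either an aspherical $K(\pi,1)$ (with $\pi$ necessarily torsion-free) or has finite fundamental group, while the only non-irreducible prime factors are the two $S^2$-bundles over $S^1$, contributing free factors to $\pi_1$; ruling out torsion and purely free options leaves at least one aspherical prime. Then $M = M' \# N$ with $N$ a closed essential $3$-manifold whose fundamental group is torsion-free, and Theorem~\ref{2} delivers $\dcat(M) = \cat(M) = 3$, provided $N$ satisfies the cap property. Verifying the cap property uniformly for all closed aspherical $3$-manifolds is the principal hurdle: it amounts to exhibiting a nontrivial cap-product of the fundamental class of $N$ with a suitable class pulled back from $B\pi_1(N)$, which should be obtainable by exploiting the fact that $N$ itself is a model for $B\pi_1(N)$ and that its classifying map is the identity up to homotopy.
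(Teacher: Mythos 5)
Your overall architecture matches the paper's: split on $\pi_1(M)$, use the GLGA/OR computation of $\cat$ for the upper bound, and supply matching lower bounds for $\dcat$. Cases 1 and 3 are essentially the paper's argument. In case 3 you extract the aspherical prime summand via geometrization and the sphere theorem, whereas the paper instead cites $\cat(M)=3$ and the connected-sum formula for $\cat$ from \cite{DS} to find a prime $M_j$ with $\cat(M_j)=3$ and deduce irreducibility; both routes work. The ``hurdle'' you flag --- the cap property for a closed aspherical $3$-manifold --- is in fact already handled in the paper by Corollary~\ref{realmain2} (equivalently Corollary~\ref{main1}), where the cap product $[N]\frown\kappa_n$ is seen to be nonzero by Poincar\'e duality in $N$, since the classifying map of an aspherical $N$ can be taken to be the identity. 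So this is a known step, not an open hurdle.

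The genuine gap is in case 2, in the sentence invoking ``a monotonicity statement of the shape $\dcat(M_1\#M_2)\ge\max\{\dcat(M_1),\dcat(M_2)\}$, which is expected to be available from an earlier section of the paper.'' No such statement is in the paper, and it is not a routine fact: it would follow from an affirmative answer to Question~\ref{conj} (the distributional analog of Rudyak's conjecture, applied to the degree-$1$ collapsing map $M_1\#M_2\to M_i$), which the paper leaves open. The inequality that \emph{is} available is the cup-length estimate $\max\{c\ell_{\mathbb{F}}(M_1),c\ell_{\mathbb{F}}(M_2)\}\le\dcat(M_1\#M_2)$ for $\mathbb{F}\in\{\Q,\R\}$ (cited from \cite[Remark~8.2]{J2}), which is strictly weaker than monotonicity of $\dcat$ itself. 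This distinction matters: you want to propagate $\dcat(S^2\times S^1)=2$ to the connected sum, but you must route the lower bound through the \emph{reason} $\dcat(S^2\times S^1)=2$, namely $c\ell_{\Q}=2$, and only that survives passage to a connected sum. The paper's fix is exactly this: write $M=N\#M_j$ with $M_j$ an $S^2$-bundle over $S^1$; if $M_j=S^2\times S^1$, apply the cup-length bound to $N\#M_j$ directly; if $M_j$ is the twisted bundle, note that $N\#N\#(S^2\times S^1)$ is a double cover of $N\#M_j$, apply the cup-length bound to that cover, and finish with the covering-map inequality $\dcat(\tilde X)\le\dcat(X)$ from Theorem~\ref{covv}. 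Your proposal to deal with the twisted bundle by passing to its double cover $S^2\times S^1$ is in the right spirit, but you must take the cover of the \emph{entire} connected sum, not just of the one summand, and you must replace the hypothetical $\dcat$-monotonicity by the rational cup-length inequality.
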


Next, we turn towards some generalized connected sums (or fiber sums) of orientable manifolds. In Section~\ref{6.1}, we define the notion of the connected sum $M\#_L N$ of two closed smooth orientable $n$-manifolds $M$ and $N$ \emph{joined} along a space $L$ which is obtained using the embeddings of a common closed submanifold $S$ in $M$ and $N$. For these generalized connected sums, we get the following.

\begin{swtheorem}\label{3}
    Let $M$ and $N$ be closed smooth orientable $n$-manifolds glued along $L$ (in the sense of Section~\ref{6.1}) to form $M\#_L\hspace{0.2mm}N$. Let $N/L$ be a closed orientable $n$-psuedomanifold. If $N/L$ is aspherical, then $\dcat(M\#_L\hspace{0.2mm}N) = \cat(M\#_L\hspace{0.2mm}N) = n$. 
\end{swtheorem}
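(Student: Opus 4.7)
The plan is to establish $\dcat(M\#_L N) = \cat(M\#_L N) = n$ by two-sided bounds. The upper bound $\cat(M\#_L N) \le n$ is immediate from dimension (as $M\#_L N$ is $n$-dimensional), and $\dcat \le \cat$ gives the rest of the upper estimate. For the lower bound $\dcat(M\#_L N) \ge n$, I would extend the obstruction-theoretic technique used for Theorems~\ref{1} and~\ref{2} by pulling back a nontrivial primary obstruction on $N/L$ along a degree-one collapse map.

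First I would analyze $N/L$. Being a closed orientable aspherical $n$-pseudomanifold, $N/L$ is a $K(\pi,1)$-space where $\pi := \pi_1(N/L)$. Asphericity together with $\dim(N/L)=n$ forces $\cd(\pi) \le n$, hence $\pi$ is torsion-free; orientability and $0 \ne [N/L] \in H_n(N/L;\Z)$ force $\cd(\pi) \ge n$. So $\cd(\pi) = n$, and by the Knudsen--Weinberger-type result recalled in Section~\ref{pre1} we get $\dcat(N/L) = \cat(N/L) = n$, with $N/L \simeq B\pi$ satisfying the cap property from Section~\ref{torsionfree}. In particular, there is a primary obstruction class $\omega \in H^n(N/L;\Z)$ (essentially the Poincar\'e dual of $[N/L]$) that witnesses this.

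Next I would construct the natural collapse map $q: M\#_L N \to N/L$. Using the gluing description from Section~\ref{6.1}, $q$ is defined as the composition $N \setminus U_N \hookrightarrow N \to N/L$ on the $N$-side and as the constant map to the image point $\bar L \in N/L$ on the $M$-side; a short check (using the homotopy equivalence $N/U_N \simeq N/L$, where $U_N$ is a tubular neighborhood of $L$) shows that $q$ is well-defined, continuous, and has degree one, i.e., $q_*[M\#_L N] = [N/L]$. Naturality of the cap-product pairing then yields
\[
\bigl\langle q^*\omega,\ [M\#_L N] \bigr\rangle \;=\; \bigl\langle \omega,\ q_*[M\#_L N] \bigr\rangle \;=\; \bigl\langle \omega,\ [N/L] \bigr\rangle \;\ne\; 0,
\]
so the pulled-back class $q^*\omega \in H^n(M\#_L N;\Z)$ is nonzero.

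The obstruction-theoretic machinery of Section~\ref{torsionfree} (the same engine that powers Theorem~\ref{1}) then concludes $\dcat(M\#_L N) \ge n$ from the nonvanishing of $q^*\omega$, matching the upper bound. The main obstacle is this transfer step: one must verify that the cap property on the pseudomanifold $N/L$ produces a class $\omega$ whose pullback $q^*\omega$ genuinely obstructs $\dcat(M\#_L N) < n$ in the distributional sense, not merely in the classical LS sense. The orientability assumptions on $M$, $N$, and $N/L$ are crucial here, rendering the relevant orientation sheaves trivial and reducing the analysis to untwisted $\Z$-coefficients, thereby sidestepping the twisted-coefficient bookkeeping of orientation sheaves that arose in the proof of Theorem~\ref{2}.
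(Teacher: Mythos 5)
Your overall strategy is the paper's: collapse $M\#_L N$ onto $N/L$, check the collapse has degree one, and pull back the (nonzero) primary obstruction to a section of $\B_n(p_{N/L})$ via Poincar\'e duality in top degree. However, there is a genuine error in the coefficient bookkeeping that, taken at face value, would invalidate the key step.

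You assert that the primary obstruction lives in $H^n(N/L;\Z)$ and is ``essentially the Poincar\'e dual of $[N/L]$,'' and that orientability ``reduces the analysis to untwisted $\Z$-coefficients.'' Neither claim is correct. The primary obstruction $\kappa_n$ to a section of $\B_n(p_{N/L}):P(N/L)_n\to N/L$ lives in $H^n(N/L;\F)$, where $\F \cong H_{n-1}(\B_n(\Omega(N/L)))$ is a genuinely nontrivial $\pi_1(N/L)$-module; it is not the Poincar\'e dual of $[N/L]$ (which sits in $H^0$), and it is not an integral class. Orientability of $N$, $N/L$, and $M\#_L N$ trivializes only the orientation sheaves $\O_{N/L}$, $\O$ — it does nothing to $\F$. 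Consequently the Kronecker pairing $\langle \omega, [N/L]\rangle$ you write is not well-formed: for a class in local coefficients one cannot simply pair with the fundamental class and get an integer. The correct mechanism is the cap product $[N/L]\frown\kappa_n \in H_0(N/L;\O_{N/L}\otimes\F) = H_0(N/L;\F)$ (orientability kills the $\O_{N/L}$ factor only), which is nonzero by Poincar\'e duality in top degree for orientable pseudomanifolds, together with the projection formula $\phi_*([M\#_L N]\frown\phi^*\kappa_n) = \phi_*([M\#_L N])\frown\kappa_n = [N/L]\frown\kappa_n$. That is precisely what the paper's proof does, and it is what you need in place of your $\Z$-coefficient pairing. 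Once that is fixed, the step you flag as ``the main obstacle'' — whether the class obstructs $\dcat$ rather than $\cat$ — is not actually an issue: $\kappa_n$ is by construction the primary obstruction to a section of $\B_n(p_{N/L})$, and Propositions~\ref{characterization} and~\ref{lift} translate its nonvanishing directly into $\dcat(\phi)\ge n$ (note $\dim(M\#_L N) = n$ makes it the only obstruction). Your preliminary analysis of $N/L$ (asphericity plus orientability force $\cd(\pi_1(N/L)) = n$, so $\pi_1$ is torsion-free and $\dcat(N/L)=n$) is correct and matches the paper.
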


We also look at some other generalized connected sums, namely the doubles of punctured compact manifolds with boundaries. For any $k\ge 1$, let us call the process of the removal of $k$ number of small, disjoint open $n$-discs (or the interiors of closed $n$-discs) from a closed orientable $n$-manifold $M$ a \emph{$k$-puncturing} of $M$. For the doubles of the finite products of these manifolds, we obtain the following.

\begin{swtheorem}\label{4}
    For $1\le i \le n$, let $k_i$, $r_i\ge 1$ and let $M_i$ be a closed orientable aspherical $r_i$-manifold. Let $X_i$ be obtained from $M_i$ after a $k_i$-puncturing of $M_i$. If $M = \prod_{i=1}^n X_i$, then $\dcat(\mathcal{D}M) = \sum_{i=1}^n r_i\hspace{0.5mm}$ for the double $\mathcal{D}M=M\#_{\pa M}\ov M$ of $M$. 
\end{swtheorem}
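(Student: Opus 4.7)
The plan is to reduce the computation to Theorem~\ref{3} applied to each factor of a natural product decomposition of $\mathcal{D}M$, and then combine the per-factor lower bounds using multiplicativity of the primary obstruction classes for $\dcat$ developed earlier in the paper.

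First, interpreting the double of the product-with-corners $M = \prod_{i=1}^n X_i$ along its boundary iteratively along each codim-$1$ face $X_1 \times \cdots \times \partial X_i \times \cdots \times X_n$, one obtains the identification
\[
\mathcal{D}M \cong \prod_{i=1}^n \mathcal{D}X_i.
\]
In particular $\dim \mathcal{D}M = \sum_{i=1}^n r_i$, so the upper bound $\dcat(\mathcal{D}M) \le \cat(\mathcal{D}M) \le \sum r_i$ is immediate from the standard dimension bound.

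Next, for each $i$ realize the factor $\mathcal{D}X_i$ as the generalized connected sum $M_i \#_{L_i} \overline{M_i}$, where $L_i = \bigsqcup^{k_i} S^{r_i-1}$ is embedded in both $M_i$ and $\overline{M_i}$ as the boundaries of the $k_i$ removed open $r_i$-discs. Collapsing $L_i$ in $\overline{M_i}$ is, up to homotopy, the same as collapsing the $k_i$ bounded $r_i$-discs to a common point, i.e., as identifying $k_i$ points of $M_i$ to one; hence
\[
\overline{M_i}/L_i \simeq M_i \vee \bigvee_{k_i-1} S^1.
\]
Since $M_i$ is aspherical and wedges of aspherical CW complexes are aspherical (the universal cover of such a wedge is a tree of copies of the contractible universal covers of the summands, hence contractible), the space $\overline{M_i}/L_i$ is aspherical. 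It is also a closed orientable $r_i$-pseudomanifold: a single singular point whose link is $\bigsqcup^{k_i} S^{r_i-1}$, over the otherwise manifold locus. Theorem~\ref{3} then gives $\dcat(\mathcal{D}X_i) = r_i$ for each $i$.

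Finally, it remains to promote the per-factor lower bounds into $\dcat(\mathcal{D}M) \ge \sum r_i$. The proof of Theorem~\ref{3} furnishes, for each $i$, a class $\alpha_i \in H^{r_i}(\mathcal{D}X_i;\, R_i)$ with appropriate local coefficients---essentially the pullback of the top class of $\overline{M_i}/L_i$---having full $\dcat$-weight $r_i$. Their external product $\alpha_1 \times \cdots \times \alpha_n$ lies in $H^{\sum r_i}\bigl(\prod_i \mathcal{D}X_i;\, \bigotimes_i R_i\bigr) = H^{\sum r_i}(\mathcal{D}M;\, R)$ and witnesses $\dcat(\mathcal{D}M) \ge \sum r_i$. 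The main obstacle is this last multiplicativity step: one must verify that the primary obstruction employed in the proofs of Theorems~\ref{1}--\ref{3} (following the framework of Knudsen--Weinberger~\cite{KW} and Dranishnikov~\cite{Dr2}) is compatible with external products of manifolds with twisted coefficients. This should follow from the naturality of the construction, but requires careful bookkeeping of the relevant local coefficient systems on the product.
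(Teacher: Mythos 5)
Your proposal hinges on the identification $\mathcal{D}M \cong \prod_{i=1}^n \mathcal{D}X_i$, and this is false with the paper's definition of the double. The double $\mathcal{D}M = M \cup_{\partial M} \ov M$ glues one reversed copy of $M$ to $M$ along the full boundary $\partial M = \bigcup_i X_1\times\cdots\times\partial X_i\times\cdots\times X_n$; the result is not the product of doubles. The simplest counterexample (take $M_1=M_2=S^1$, $k_i=1$, so $X_i \cong [0,1]$ and $M=[0,1]^2\cong D^2$): here $\mathcal{D}M = D^2\cup_{S^1}D^2 \cong S^2$, whereas $\mathcal{D}X_1\times\mathcal{D}X_2 \cong S^1\times S^1 = T^2$. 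Your phrase ``iteratively along each codim-$1$ face'' describes a different construction that would produce $2^n$ copies of $M$, not the two-copy double $M\#_{\partial M}\ov M$ that the theorem is about. Since every subsequent step is built on this identification, the argument does not go through.

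A second issue, which you flag yourself, is the final ``multiplicativity of primary obstruction classes'' step. Nothing in the paper (nor in~\cite{KW} or~\cite{Dr2} as cited) establishes a K\"unneth-type compatibility of the primary obstruction to a section of $\B_n(p_X)$ with external products of twisted coefficient systems, and the cup-length bound of Proposition~\ref{obvious}(4) requires classes pulled back from a symmetric power, not arbitrary twisted classes, so it does not give this for free.

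The paper avoids both problems by never decomposing $\mathcal{D}M$ at all. It instead constructs a single degree-one map $\tau\circ\phi : \mathcal{D}M \to Z := \prod_i M_i$ --- first collapse $\ov M$ to obtain $M/\partial M$, then fill each puncture by a disc to map into $Z$ --- and runs the obstruction-theoretic argument of Theorems~\ref{1}--\ref{3} against the non-vanishing primary obstruction in the closed aspherical manifold $Z$. If you want to salvage a factor-by-factor strategy, you would first need a correct description of $\mathcal{D}M$ (e.g., as a suitable fiber sum of $2^n$ copies of $\prod M_i$ glued along the faces), and then a genuinely new product lemma for the primary obstruction; both are substantial extra work compared with the degree-one-map argument.
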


Finally, we turn towards the class of closed Alexandrov spaces with curvature bounded below. In Section~\ref{definitions}, we define the notion of the connected sum of closed Alexandrov $n$-spaces along their regular points as a direct extension of the corresponding notion from~\cite{decomposition} for the case $n=3$. Using techniques similar to those used in the proof of Theorem~\ref{2}, we obtain the following.

\begin{swtheorem}\label{5}
    Let $M$ and $N$ be closed Alexandrov $n$-spaces. If $N$ is an aspherical manifold, then $\dcat(M\#N) = \cat(M\#N)=n$ for the connected sum $M\# N$ along regular points of $M$ and $N$.
\end{swtheorem}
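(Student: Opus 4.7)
The plan is to imitate the proof of Theorem~\ref{2}, using the aspherical hypothesis on $N$ to supply the obstruction-theoretic input and the local Euclidean structure at regular points of Alexandrov spaces to reduce the geometric step to the manifold case. Since closed Alexandrov $n$-spaces satisfy the dimensional bound $\cat\le n$ (recalled in Section~\ref{definitions}) and since $\dcat\le\cat$ always, the theorem reduces to proving $\dcat(M\#N)\ge n$.

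First I would set up a collapse map $\rho\colon M\#N\to N$. By construction the connected sum is formed at a regular point $p\in M$ and a regular point $q\in N$, so by the definition of regular point both have Euclidean neighborhoods on which the gluing is the standard one. Collapsing everything on the $M$-side of the gluing sphere to a single point in $N$ yields a continuous surjection $\rho$, and a Seifert--van~Kampen decomposition along the gluing sphere (valid because it sits inside Euclidean charts) shows that $\rho_{*}\colon\pi_1(M\#N)\to\pi_1(N)$ is a retraction, hence in particular surjective.

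Next I would invoke the obstruction theory of Section~\ref{torsionfree}. Because $N$ is an aspherical closed $n$-manifold, $\pi_1(N)$ is torsion-free, $N$ is essential, and $N$ satisfies the cap property (its classifying map $N\to B\pi_1(N)$ is a homotopy equivalence, and the orientation class in $H^n(N;\mathcal{O}_N)$ caps the fundamental class to a generator). The proof of Theorem~\ref{1} then produces a class $\alpha\in H^n(N;\mathcal{O}_N)$ whose weight, under the characterization of $\dcat$ from Section~\ref{pre1}, equals $n$. Pulling back along $\rho$ gives $\rho^{*}\alpha\in H^n(M\#N;\rho^{*}\mathcal{O}_N)$, and the surjectivity of $\rho_{*}$ together with the naturality of the primary obstructions ensures that $\rho^{*}\alpha$ still has weight $n$; this is precisely the mechanism that underlies Theorem~\ref{2}, where a witness of $\dcat=n$ was transported from a summand to the full connected sum via a collapse map.

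The main obstacle is checking that the Alexandrov setting does not damage this obstruction-theoretic bookkeeping. One must verify that the pullback local system $\rho^{*}\mathcal{O}_N$ is well-defined on $M\#N$, that the primary obstruction used in Section~\ref{torsionfree} is natural with respect to the continuous map $\rho$, and that the step promoting a nonzero twisted cohomology class in top degree to a lower bound for $\dcat$ goes through for compact metric spaces rather than only for closed manifolds. Each of these points should reduce to the observation that the relevant local data are supported inside the Euclidean neighborhoods of the regular points where the gluing takes place, but making this rigorous is the delicate step.
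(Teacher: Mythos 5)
The overall strategy in your proposal matches the paper's: collapse $M\#N$ onto $N$, pull back the primary obstruction to a section of $\B_n(p_N)$, and argue that the pullback is non-zero so that $\dcat$ is forced up to $n$. However, the proposal has a genuine gap at the one place where the Alexandrov setting actually matters.

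The way one shows $\phi^*(\kappa_n)\ne 0$ in Theorem~\ref{2} and Corollary~\ref{realmain2} is the cap-product computation
\[
\phi_*\bigl([M\#N]\frown\phi^*(\kappa_n)\bigr)=\phi_*\bigl([M\#N]\bigr)\frown\kappa_n=[N]\frown\kappa_n\ne 0,
\]
where the final non-vanishing is Poincar\'e duality in the closed manifold $N$ in $\O_N\otimes\F$ coefficients. This argument requires a fundamental class $[M\#N]$ on the connected sum itself, and $M\#N$ is in general \emph{not} a manifold (only $N$ is assumed to be). The reason the argument survives is that closed Alexandrov spaces still carry fundamental classes: $[M\#N]_R$ exists for $R=\Z$ or $R=\Z_2$ by Mitsuishi's theorem, which the paper explicitly invokes before the proof. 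Your proposal never identifies this ingredient; instead, you assert that ``the surjectivity of $\rho_*$ together with the naturality of the primary obstructions'' suffices. Surjectivity on $\pi_1$ is used only to get an isomorphism on $H_0$; by itself it says nothing about injectivity of $\rho^*$ on top-dimensional cohomology, and without a fundamental class for $M\#N$ the detection step collapses (indeed, trying instead to restrict to the punctured-$N$ side fails outright, since $H^n$ of a compact $n$-manifold with non-empty boundary vanishes in twisted coefficients). Your closing remark that the delicate step ``should reduce to the observation that the relevant local data are supported inside the Euclidean neighborhoods of the regular points'' is therefore misdirected: the obstacle is global (existence of $[M\#N]_R$ on a possibly non-manifold space), not local at the gluing locus. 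A minor secondary point: you place the obstruction class and its pullback in $\O_N$-coefficients, but $\kappa_n$ lives in $H^n(N;\F)$ for the relevant $\pi_1(N)$-module $\F$; the orientation sheaf enters only via the Poincar\'e pairing, in $\O_N\otimes\F$ coefficients.
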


By relaxing the manifold condition on $N$ in Theorem~\ref{5} and instead imposing orientability on the spaces involved, we get the following.

\begin{swtheorem}\label{6}
    Let $M$ and $N$ be closed orientable Alexandrov $n$-spaces. If $N$ is aspherical, then $\dcat(M\# N) =\cat(M\# N)=n$ for the connected sum $M\# N$ along regular points of $M$ and $N$.
\end{swtheorem}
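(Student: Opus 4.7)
The plan is to adapt the proof of Theorem~\ref{2} to the Alexandrov setting, using orientability as a substitute for the absent manifold structure on $N$. Since $N$ is aspherical, it has the homotopy type of $K(\pi,1)$ with $\pi = \pi_1(N)$, and $\pi$ is torsion-free because it acts freely on the contractible universal cover of $N$. The orientability of $N$ provides a fundamental class $[N] \in H_n(N;\Z)$ whose image under the classifying map $u_N : N \to K(\pi,1)$ is non-zero (as $u_N$ is a homotopy equivalence), so $N$ is essential in the relevant sense. Because $N \simeq K(\pi,1)$, the cap property for $N$ reduces to a group-cohomological statement about the torsion-free group $\pi$ of finite cohomological dimension, which follows from the techniques of Knudsen--Weinberger~\cite{KW} and Dranishnikov~\cite{Dr2} already used in Section~\ref{torsionfree}.

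Next, I would transfer the obstruction to $M \# N$ through the collapse map $c : M \# N \to M \vee N \to N$, which is well-defined because the connected sum is performed along regular points admitting Euclidean neighborhoods, so the local picture near the gluing region is identical to the manifold case. The orientability of both $M$ and $N$ yields orientability of the connected sum, and $c$ sends $[M \# N]$ to $[N]$ up to sign. Concretely, one pulls back the primary obstruction class from $N$ via $c^*$ and caps against $[M \# N]$; the cap property together with essentiality shows that the pulled-back primary obstruction is non-trivial, forcing $\dcat(M \# N) \ge n$. The opposite inequality $\cat(M \# N) \le n$ is immediate from the standard dimensional upper bound.

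The main obstacle will be adapting the primary obstruction computation of Section~\ref{torsionfree} from manifolds to orientable Alexandrov spaces, since the manifold version may implicitly invoke Poincar\'e duality that is not available for general Alexandrov spaces. However, the essential inputs of that argument reduce to the existence of a fundamental class and the non-triviality of a specific cap product in degree $n$, both of which are available for closed orientable Alexandrov $n$-spaces. This also clarifies why orientability of $M$, which is not required in Theorem~\ref{5}, becomes necessary in Theorem~\ref{6}: it supplies the fundamental class $[M \# N]$ needed for the cap-product step that previously came for free from the manifold structure on $N$.
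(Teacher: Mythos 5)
Your overall strategy coincides with the paper's: collapse $M\# N$ onto $N$ via the pinch map $\phi$, pull back the primary obstruction $\kappa_n$ to a section of $\B_n(p_N)$, cap the pullback against the fundamental class of $M\# N$, and push forward to conclude $[N]\frown\kappa_n\ne 0$. However, there is a misattribution at the decisive step. You write that the cap property for $N$ ``reduces to a group-cohomological statement about the torsion-free group $\pi$ \ldots which follows from the techniques of Knudsen--Weinberger and Dranishnikov.'' Those techniques (Theorem~\ref{eg}) give $\dcat(B\pi)=\cd(\pi)=n$ and hence $\kappa_n\ne 0$, but they say nothing about the non-vanishing of the cap product $[N]\frown\kappa_n$. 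For a closed Alexandrov space that is \emph{not} a manifold, the latter is a genuine geometric fact: the paper invokes Mitsuishi's Poincar\'e duality theorem for closed $R$-orientable Alexandrov $n$-spaces (Theorem~\ref{mit5.1}), which states that $\mathrm{PD}_n\colon H^n(N;G)\to H_0(N;G)$ is an isomorphism for any $R$-module $G$. Without this, knowing $\kappa_n\ne 0$ does not by itself give $[N]\frown\kappa_n\ne 0$.

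This also corrects your explanation of why orientability enters. You locate it in the need for a fundamental class $[M\# N]$; but a $\Z_2$-fundamental class always exists, and indeed Theorem~\ref{5} requires no orientability of $M$ at all. The binding constraint is the orientability of $N$: Mitsuishi's duality is stated only for untwisted $R$-module coefficients $G$, whereas for non-orientable $N$ the cap product lands in $\O_N\otimes\F$ coefficients, to which Theorem~\ref{mit5.1} does not apply (and $\O_N\otimes\F\ne\F$ in general). This is precisely what the manifold hypothesis in Theorem~\ref{5} was compensating for, via the twisted Poincar\'e duality available on manifolds. Once you replace the appeal to Knudsen--Weinberger/Dranishnikov at the cap-product step by an appeal to Mitsuishi's Theorem~\ref{mit5.1}, your argument matches the paper's.
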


To the best of our knowledge, the classical LS-category of closed Alexandrov $n$-spaces has not been studied in the literature. In this paper, we initiate a study in that direction by proving Theorems~\ref{5} and~\ref{6} using some novel ideas. 

We note that each of the above statements is proven in dimensions $\ge 3$. A closed $1$-manifold is a circle and a closed $2$-manifold is a surface of genus $\ge 0$. Also, an Alexandrov $n$-space is a manifold for $n \le 2$ (see Section~\ref{definitions}). The distributional category of these spaces has been completely determined in~\cite[Section 6.1]{DJ}. In fact, $\dcat(M) = \cat(M)$ if $M$ is a closed $n$-manifold for $n \le 2$ other than $\R P^2$ and $\dcat(\R P^2) = 1$. So, in this paper, we shall always work in dimensions $n \ge 3$ unless explicitly stated otherwise.

\subsection*{Significance of our results} 
Here, we highlight the relevance of each of the above theorems for $\dcat$ and $\cat$.

Theorem~\ref{1} generalizes the fact from~\cite{KW} and~\cite{Dr2} that $\dcat(M) = \dim(M)$ for any closed aspherical $n$-manifold $M$. Theorem~\ref{2} gives a partial connected sum formula for $\dcat$ analogous to the formula of~\cite[Theorem 1]{DS} for $\cat$. Theorems~\ref{3} and~\ref{4} determine $\dcat$ of some orientable generalized connected sums that are not covered in Theorem~\ref{2}. Theorems~\ref{5} and~\ref{6} extend Theorem~\ref{2} to some non-manifold cases and provide a partial connected sum formula for $\cat$ and $\dcat$ of (possibly non-manifold) closed Alexandrov spaces. This is also relevant because the connected sum formula for $\cat$ from~\cite{DS} is not known for closed Alexandrov spaces.

\subsection*{Applications of our results} 
Here, we highlight the new computations of $\dcat$ that are obtained as a consequence of the above theorems.

Theorem~\ref{1} computes $\dcat$ of several symplectically aspherical manifolds that are not aspherical. Theorem~\ref{2} computes $\dcat$ of connected sums involving the examples considered in Theorem~\ref{1} and aspherical manifolds whose rational cup-lengths are not maximum. It also helps determine $\dcat$ of closed $3$-manifolds, see Theorem~\ref{bb}. Theorem~\ref{bb} proves an analog of Rudyak's conjecture~\cite{R1},~\cite{R2} for $\dcat$ of low-dimensional manifolds in various cases. Theorem~\ref{4} computes $\dcat$ and $\cat$ of some closed geometrically decomposable $4$-manifolds~\cite{Hi} that are not aspherical. The computation of the classical LS-category of these spaces seems missing from the literature, so Theorem~\ref{4} indeed produces some new computations. Theorem~\ref{5} calculates $\dcat$ of connected sums of non-manifolds with some examples considered in Theorem~\ref{2}. Theorems~\ref{5} and~\ref{6} also help in estimating $\dcat$ and $\cat$ of closed Alexandrov $3$-spaces in some special cases. 

Lastly, we note that distributional category ($\dcat$) is a fairly new numerical invariant; indeed, many aspects of its behavior remain to be understood fully. We hope that the results of this paper, which suggest a close relationship of $\dcat$ with other classical invariants and compute $\dcat$ of various manifolds using some novel techniques, will stimulate interest in this invariant and further research activity.


\subsection*{Structure of the paper}
In Section~\ref{Preliminaries}, we revisit the theory of distributional category and recall some facts and notions from algebraic topology. In Section~\ref{essential}, we prove Theorem~\ref{1} and see its various applications. In Section~\ref{connsums}, we prove Theorem~\ref{2} and use it to obtain many new computations of $\dcat$. In Section~\ref{lowdim}, we study $\dcat$ of low-dimensional manifolds, prove Theorem~\ref{bb}, and apply it to an analog of Rudyak's conjecture for distributional category. Section~\ref{gensum} contains the proof of Theorems~\ref{3} and~\ref{4}, and the application of Theorem~\ref{4} to closed geometrically decomposable $4$-manifolds. In Section~\ref{alexalex1}, we study closed Alexandrov spaces, prove Theorems~\ref{5} and~\ref{6}, and provide some new computations for both $\cat$ and $\dcat$. We end this paper in Section~\ref{alexalex2}, where we focus on non-manifold Alexandrov $3$-spaces, discuss some problems in determining their $\cat$ and $\dcat$, and describe spaces $X$ for which $\cat(X) > 1$ using their curvature and cohomogeneity. 

\subsection*{Notations and conventions} All the topological spaces considered in this paper are compact and path-connected metric spaces. We use the symbol ``$=$" to denote homeomorphisms of spaces and isomorphisms of groups, and ``$\simeq$" to denote homotopy equivalences. Only the rings with unity are considered and the tensor product is taken over $\Z$. All the results and computations in this paper are obtained for dimensions $\ge 3$.

\section{Preliminaries}\label{Preliminaries}

\subsection{Motivation for $\dcat$}\label{lastnew}
Given a pointed topological space $(Z,z_0)$, let us define $P(Z) = \left\{\phi: [0,1] \to Z \mid \phi(1) = z_{0}\right\}$, and let $p_Z: P(Z) \to Z$ be the evaluation fibration defined as $p(\phi)= \phi(0)$ for all $\phi\in P(Z)$. For each $n\ge 1$, we define the $n$-th \emph{Ganea space}, denoted $G_{n}(Z)$, to be the fiberwise iterated join of $n$ copies of $P(Z)$ along $p_Z$, i.e.,
\[
G_{n}(Z) = \left\{\sum_{i=1}^{n} \lambda_{i} \phi_{i} \hspace{1mm} \middle| \hspace{1mm} \phi_{i} \in P(Z), \sum_{i=1}^{n}\lambda_{i} = 1, \lambda_{i} \geq 0, \phi_{i}(0) = \phi_{j}(0)\right\}.
\]
Each element of $G_n(Z)$ is a formal ordered linear combination where the terms with weights $\lambda_i=0$ are dropped.
On this space, we define the $n$-th \emph{Ganea fibration} $p_{n}^{Z} : G_{n}(Z) \to Z$ such that $p_{n}^{Z} (\sum_{i=1}^{n} \lambda_{i} \phi_{i}) = \phi_{i}(0)$ for any $i\le n$ such that $\lambda_{i} > 0$.

The following theorem gives the Ganea--Schwarz characterization of the LS-category of pointed maps,~\cite{Sch},~\cite{BG},~\cite{Ja},~\cite{CLOT}.

\begin{theorem}
    Given a pointed map $f:(X,x_0)\to (Y,y_0)$,  $\cat(f) < n$ if and only if there exists a lift of $f$ with respect to the fibration $p_{n}^{Y}: G_{n}(Y) \to Y$.
\end{theorem}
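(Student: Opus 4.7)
The plan is to prove both implications directly by translating the combinatorial data of an open categorical cover with null-homotopies into the fiberwise join structure of $G_n(Y)$, and vice versa.

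For the forward direction, I would suppose $\cat(f) < n$ and fix an open cover $X = U_1 \cup \cdots \cup U_n$ with null-homotopies $H_i : U_i \times [0,1] \to Y$ satisfying $H_i(\cdot, 0) = f|_{U_i}$ and $H_i(\cdot, 1) = y_0$. Each $H_i$ determines a continuous map $\widehat H_i : U_i \to P(Y)$ via $\widehat H_i(x)(t) = H_i(x, t)$, with $p_Y \circ \widehat H_i = f|_{U_i}$. Since $X$ is paracompact, fix a partition of unity $\{\lambda_i\}_{i=1}^n$ subordinate to $\{U_i\}_{i=1}^n$, and define
\[
\widetilde f(x) = \sum_{i \,:\, \lambda_i(x) > 0} \lambda_i(x)\, \widehat H_i(x).
\]
The common-basepoint condition $\phi_i(0) = \phi_j(0)$ built into $G_n(Y)$ is automatic since every surviving path starts at $f(x)$; hence $\widetilde f$ lands in $G_n(Y)$ and evidently $p_n^Y \circ \widetilde f = f$.

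For the backward direction, given a lift $\widetilde f : X \to G_n(Y)$ of $f$, I would define $V_i \subset G_n(Y)$ to be the open subset consisting of elements whose $i$-th weight is strictly positive, and set $U_i := \widetilde f^{-1}(V_i)$. Because the weights in any element of $G_n(Y)$ sum to $1$, the sets $\{V_i\}_{i=1}^n$ cover $G_n(Y)$, so $\{U_i\}_{i=1}^n$ is an open cover of $X$. Over $V_i$ the assignment $\pi_i : \sum_j \lambda_j \phi_j \mapsto \phi_i$ is well-defined (terms with zero weight being dropped by convention) and continuous, and the resulting path $(\pi_i \circ \widetilde f)(x)$ goes from $f(x)$ to $y_0$. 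Therefore
\[
H_i(x,t) := (\pi_i \circ \widetilde f)(x)(t)
\]
is a null-homotopy of $f|_{U_i}$, showing $\cat(f) \le n-1$.

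The main technical point to verify carefully is the continuity of $\pi_i$ on $V_i$ together with the continuity of $\widetilde f$ in the construction above; both reduce to the standard fact that the iterated fiberwise join of $P(Y) \to Y$ along $p_Y$ has a quotient topology in which the coordinate projections are continuous on the open loci where their inputs carry positive weight. Once that is in hand, the two constructions are essentially inverse to each other on the nose (and certainly up to fiber-homotopy), which is the classical Ganea--Schwarz characterization (cf.~\cite{CLOT}).
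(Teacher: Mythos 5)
Your proof is correct and is essentially the classical Ganea--Schwarz argument. Note that the paper itself does not prove this statement; it is quoted as a known theorem with references to Schwarz, Berstein--Ganea, James, and Cornea--Lupton--Oprea--Tanr\'e, and your two constructions are precisely the ones carried out in those sources (e.g.\ in~\cite{CLOT}).

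Two small points worth making explicit, though neither is a gap. First, your forward direction tacitly uses that a null-homotopy of $f|_{U_i}$ can be arranged to terminate at the chosen basepoint $y_0$ rather than at some arbitrary constant; this is automatic because the paper's standing conventions require $Y$ to be path-connected, but it should be said. Second, in the partition-of-unity step, the reason $\widetilde f$ is continuous at a point $x$ lying on the boundary of $\{\lambda_i>0\}$ is that subordination forces $\operatorname{supp}(\lambda_i)\subset U_i$, so $\widehat H_i$ is still defined and continuous in a full neighborhood of such an $x$; combined with the join topology's behavior as a weight tends to $0$, the $i$-th term contributes continuously and then drops out. You gesture at this in your final paragraph, but spelling it out would close the one place a reader might hesitate. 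Otherwise the argument, including the open sets $V_i=\{\lambda_i>0\}$ and the coordinate projections $\pi_i$ in the reverse direction, is exactly right.
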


So, $\cat(f)$ is the smallest integer $n$ such that each $x\in X$ is mapped to a formal \emph{ordered} linear combination of at most $n+1$ non-negatively weighted paths in $P(Y)$ where each path has initial point $f(x)$ and endpoint $y_0$.

One of the main motivations for defining the distributional version of this classical notion of LS-category is to think about an invariant that assigns to a given pointed map $f:(X,x_0)\to (Y,y_0)$ the smallest integer $n$ such that each $x\in X$ is mapped to a formal \emph{unordered} linear combination of at most $n+1$ weighted paths in $P(Y)$ having initial point $f(x)$ and endpoint $y_0$ (in~\cite{DJ}, we considered the special case when $f$ is the identity map). Then, as in~\cite{DJ}, one can develop a theory of this new invariant parallel to the theory of the classical invariant $\cat$.

\subsection{Distributional category}\label{pre1}For a metric space $Z$, let $\mathcal{B}(Z)$ denote the set of probability measures $\mu=\sum\lambda_i\delta_{x_i}$ on $Z$, where $\sum\lambda_i=1$, $\lambda_i\ge 0$, and $\delta_{x_i}$ is the point Dirac measure at $x_i\in Z$. For any $n \ge 1$, let
\[
\mathcal{B}_{n}(Z) = \left\{\mu \in \mathcal{B}(Z) \mid |\supp(\mu)| \le n \right\}
\]
denote the space of probability measures on $Z$ supported by at most $n$ points, equipped with the Lévy--Prokhorov metric~\cite{Pr} (or the Wasserstein distance metric), see also~\cite[Section 3.1]{DJ}. Here, $\supp(\mu)$ denotes the support of $\mu$. Given the pointed metric space $(Z,z_0)$, for each $z\in Z$, let $P(z,z_0)=\{\phi\in P(Z)\mid \phi(0)=z\}$.

\begin{defn}
    The {\em distributional category} of a map $f:(X,x_0) \to (Y,y_0)$, denoted $\dcat(f)$, is the smallest integer $n$ for which there exists a continuous map $H: X \to \B_{n+1}(P(Y))$ such that $H(x) \in \B_{n+1}(P(f(x),y_0))$ for all $x \in X$.
\end{defn}

Upon taking $f$ as the identity map, we get the notion of the distributional category of a space $X$, denoted $\dcat(X)$. For the following discussion, we refer to~\cite[Sections 3.2 and 4.2]{DJ}. 

Given a space $X$, $\dcat(X)$ is a lower bound to its classical LS-category $\cat(X)$. It is easy to see that $\dcat(X) = 0$ if and only if $X$ is contractible. Hence, $\dcat(S^n) = 1$ for all $n$. Interestingly, $\dcat(\R P^n) = 1$ for all $n$, in contrast to the well-known equality $\cat(\R P^n) = n$. This result leads to various differences between the theories of $\cat$ and $\dcat$, see~\cite[Section 6.1]{DJ} and~\cite[Remark 4.13]{J2}. In general, $\dcat(X)$ can be arbitrarily large; indeed, the rational and the real cup-lengths of $X$
are lower bounds to $\dcat(X)$. However, we note that, unlike the case of the classical LS-category, the $R$-cup-length of $X$, denoted $c\ell_{R}(X)$, is not necessarily a lower bound to $\dcat(X)$ for a general ring $R$ due to the example of $\R P^n$. Furthermore, $\dcat$ is a homotopy invariant of spaces. So, for a finitely generated discrete group $\Gamma$, one can define $\dcat(\Gamma): = \dcat(B\Gamma)$, where $B\Gamma$ denotes the unique (up to homotopy) classifying space of $\Gamma$. 

The well-known result of Eilenberg and Ganea~\cite{EG} says that $\cat(\Gamma) = \cd(\Gamma)$, where $\cd(\Gamma)=\max\{k\in\mathbb{Z}\mid H^k(\Gamma,\A)\ne 0 \text{ for some } \Z\Gamma\text{-module } \A\}$ is the cohomological dimension of $\Gamma$. For any finite group $\Gamma$, while $\cat(\Gamma)=\cd(\Gamma)$ is infinite, it follows from~\cite[Theorem 7.2]{KW} that $\dcat(\Gamma)\le |\Gamma|-1$ (see also~\cite[Section 8.A]{J1}). In fact, for any prime $p\ge 2$, $\dcat(\Z_p)=p-1$ due to~\cite[Theorem 5.12]{Dr2} (see also~\cite{DJ} for the case $p=2$). Therefore, an analog of the Eilenberg--Ganea theorem does not hold in the theory of distributional category for groups having torsion. However, such an analog holds for torsion-free discrete groups.

\begin{theorem}[\protect{\cite[Theorem 7.4]{KW},~\cite[Theorem 5.3]{Dr2}}]\label{eg}
    If $\Gamma$ is a torsion-free discrete group, then $\dcat(\Gamma) = \textup{acat}(\Gamma) = \cat(\Gamma) = \cd(\Gamma)$.
\end{theorem}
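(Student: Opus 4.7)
The plan is to reduce the four-way equality to the single nontrivial inequality $\dcat(\Gamma) \ge \cd(\Gamma)$. The upper bound is immediate: Eilenberg--Ganea gives $\cat(\Gamma) = \cd(\Gamma)$, the relation $\dcat(X) \le \cat(X)$ is recalled in Section~\ref{pre1}, and the variant $\acat$ is known to satisfy the sandwich $\dcat \le \acat \le \cat$. So once $\dcat(\Gamma) \ge \cd(\Gamma)$ is proved, all four quantities collapse to $\cd(\Gamma)$.

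For the lower bound, I would set up the distributional analog of the Ganea--Schwarz characterization. Unpacking the definition from Section~\ref{pre1}, $\dcat(B\Gamma) < n$ amounts to the existence of a section of a Ganea-type fibration $q_n : E_n \to B\Gamma$ whose total space is built from $\B_n(P(B\Gamma))$ and whose fiber over the basepoint is $\B_n(\Omega B\Gamma)$. Since $B\Gamma$ is aspherical, $\Omega B\Gamma \simeq \Gamma$ as a discrete set, so the fiber is $\B_n(\Gamma)$, the space of formal probability measures on $\Gamma$ with at most $n$ atoms. This space is homotopy equivalent to the $(n-1)$-skeleton of the infinite-dimensional simplex on $\Gamma$; in particular, it is $(n-2)$-connected, and its first nontrivial homotopy group $\pi_{n-1}(\B_n(\Gamma))$ carries a natural $\Z\Gamma$-module structure.

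Next I would apply primary obstruction theory: the obstruction to a section of $q_n$ lives in $H^n(B\Gamma; \pi_{n-1}(\B_n(\Gamma)))$. Setting $n = \cd(\Gamma)$, one picks a $\Z\Gamma$-module $A$ with $H^n(\Gamma; A) \ne 0$ and argues that the universal obstruction maps nontrivially to $H^n(\Gamma; A)$ under a suitable coefficient homomorphism, thereby witnessing a class that represents the cohomological dimension. The main obstacle I expect is precisely this last step: making explicit the $\Z\Gamma$-module $\pi_{n-1}(\B_n(\Gamma))$, typically via the iterated join or a bar-type resolution, and verifying that the obstruction is genuinely nonzero rather than a formal artifact. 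Torsion-freeness of $\Gamma$ is indispensable here, because it ensures that $\Gamma$ acts freely on the simplicial strata of $\B_n(\Gamma)$; when $\Gamma$ has torsion, the induced identifications collapse the relevant homotopy, which is consistent with the failure of the theorem for finite groups noted immediately before the statement. Once the obstruction is shown to be nonzero, no section of $q_n$ exists, giving $\dcat(\Gamma) \ge n = \cd(\Gamma)$ and closing the proof.
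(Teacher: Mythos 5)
The paper gives no proof of this statement---it cites it directly from Knudsen--Weinberger~\cite{KW} and Dranishnikov~\cite{Dr2}---so there is no internal argument to compare your sketch against. Evaluated on its own terms, your reduction and setup are sound and match the route taken in those sources: reduce the four-way equality to $\dcat(\Gamma)\ge\cd(\Gamma)$ via Eilenberg--Ganea and the chain $\dcat\le\acat\le\cat$; invoke the section-of-fibration characterization (Proposition~\ref{lift} here); identify the fiber $\B_n(\Omega B\Gamma)\simeq\B_n(\Gamma)$ with the $(n-1)$-skeleton of the full simplicial complex on the set $\Gamma$; record $(n-2)$-connectivity; run primary obstruction theory in $H^n(B\Gamma;\pi_{n-1}(\B_n(\Gamma)))$. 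Your diagnosis of where torsion-freeness enters is also correct: if $g\in\Gamma$ had finite order $k\le n$, then the uniform measure on $\{e,g,\dots,g^{k-1}\}$ is a fixed point of the $\Gamma$-action on $\B_n(\Gamma)$, so freeness of that action is precisely the torsion-free hypothesis.

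The gap is that you stop at the step that carries all the content. You write that one ``picks a $\Z\Gamma$-module $A$ with $H^n(\Gamma;A)\ne 0$ and argues that the universal obstruction maps nontrivially $\dots$ under a suitable coefficient homomorphism,'' and then immediately flag this as ``the main obstacle I expect.'' That sentence is the theorem; without it you have only shown that the obstruction \emph{lives} in the right group, not that it is nonzero. The cited proofs close this gap by a concrete identification, not by hoping for a convenient coefficient map. One version: take $B\Gamma$ to be $m$-dimensional with $m=\cd(\Gamma)$; since $\Gamma$ acts freely and cellularly on the $(m-2)$-connected complex $\B_m(\Gamma)$, its reduced cellular chain complex is a partial free $\Z\Gamma$-resolution of $\Z$ of length $m-1$ whose $(m-1)$-st homology is $\pi_{m-1}(\B_m(\Gamma))$; the vanishing of the primary obstruction would let one splice a section over the $m$-skeleton into a free resolution of length $<m$, contradicting $\cd(\Gamma)=m$. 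Dranishnikov's version instead relates the primary obstruction of $\B_m(p)$ to the $m$-th power of the Berstein--Schwarz class $\beta_\Gamma^m$ and invokes Theorem~\ref{bs}. Either way, some such explicit computation is indispensable; as written, ``verifying that the obstruction is genuinely nonzero'' is an assertion, not an argument.
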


Here, $\text{acat}$ denotes the ``analog category", which is a new homotopy invariant of topological spaces introduced recently by Knudsen and Weinberger in~\cite{KW}, who first proved that $\text{acat}(\Gamma) = \cd(\Gamma)$. We note that the notions of $\text{acat}$ and $\dcat$ are very similar, see~\cite[Conjecture 1.2]{KW},~\cite[Section 8.A]{J1}, and~\cite[Page 2]{Dr2}. 

Recall the evaluation fibration $p_Z: P(Z) \to Z$  defined as $p_Z(\phi) = \phi(0)$ for all $\phi\in P(Z)$. For each $n \ge 1$, the result of the fiberwise application of the functor $\mathcal B_n$ to the based path space $P(Z)$ yields a space
\[
P(Z)_n = \bigcup_{z \in Z}\mathcal{B}_n  (p_Z^{-1}(z)) =  \left\{ \mu \in \B_n(P(Z)) \hspace{1mm} \middle| \hspace{1mm} \supp(\mu) \subset p_Z^{-1}(z), \hspace{1mm} z \in Z \right\}
\]
and a continuous map $\mathcal{B}_n(p_Z) : P(Z)_n \to Z$ such that $\mathcal{B}_n(p_Z)(\mu) = z$ whenever $\mu \in \mathcal{B}_n (p_Z^{-1}(z))$. We refer to~\cite[Section 5]{DJ} for a proof of the following.

\begin{prop}\label{characterization}
The map $\mathcal{B}_n(p_Z) : P(Z)_n \to Z$ is a Hurewicz fibration, and $\dcat(Z) < n$ if and only if the fibration $\mathcal{B}_n(p_Z)$ admits a section.
\end{prop}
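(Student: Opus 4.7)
The plan is to decompose Proposition~\ref{characterization} into its two assertions and handle them separately: first, that $\mathcal{B}_n(p_Z) : P(Z)_n \to Z$ is a Hurewicz fibration, and second, that the existence of a section of this map is equivalent to the inequality $\dcat(Z) < n$. The underlying strategy for both parts is to exploit the fact that $p_Z : P(Z) \to Z$ is the standard path-space fibration, for which an explicit continuous lifting function is well known, and to show that it propagates through the fiberwise $\mathcal{B}_n$ construction.

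For the fibration assertion, I would build a lifting function for $\mathcal{B}_n(p_Z)$ directly from the one for $p_Z$. Given $\mu \in P(Z)_n$ supported in $p_Z^{-1}(z)$ and a path $\alpha: [0,1] \to Z$ with $\alpha(0) = z$, one lifts each path in $\supp(\mu)$ to a path in $P(Z)$ covering $\alpha$ (using the standard concatenation-of-reversed-initial-segment-with-the-given-path construction), and transports the weights accordingly; the image at time $t$ is then a measure in $\mathcal{B}_n(p_Z^{-1}(\alpha(t)))$, and therefore lies in $P(Z)_n$. What remains is the continuity of this lifting function in the L\'evy--Prokhorov metric. Since the path-space lift is continuous in the compact-open topology on $P(Z)$, and since L\'evy--Prokhorov distance is controlled by perturbation of atoms with weights held fixed, the composition is continuous. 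The subtle point, which I expect to be the main obstacle, is that the representation $\mu = \sum \lambda_i \delta_{\phi_i}$ is not unique when atoms coincide or some $\lambda_i$ vanish, so the lift must be defined intrinsically in terms of the measure rather than a chosen representative. The cleanest way to arrange this is to factor through the fiberwise application of $\mathcal{B}_n$ to the lifting function itself, viewed as a continuous map out of the pullback $\{(e,\alpha)\in P(Z)\times Z^{[0,1]} : p_Z(e)=\alpha(0)\}$, and then restrict to tuples whose underlying paths all live over a single point of $Z$.

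For the characterization, the argument is essentially a matter of unpacking definitions once one observes that $p_Z^{-1}(z) = P(z, z_0)$, so the fiber of $\mathcal{B}_n(p_Z)$ over $z$ is precisely $\mathcal{B}_n(P(z, z_0))$. Applying the definition of $\dcat$ with $f = \id_Z$, the inequality $\dcat(Z) < n$ asserts the existence of a continuous map $H: Z \to \mathcal{B}_n(P(Z))$ with $H(z) \in \mathcal{B}_n(P(z, z_0))$ for every $z$; this is the same datum as a continuous map $Z \to P(Z)_n$ satisfying $\mathcal{B}_n(p_Z) \circ H = \id_Z$, i.e., a section. Hence the two conditions coincide, and combined with the first part the proposition follows.
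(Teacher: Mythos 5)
Your proposal is essentially correct and follows the natural route. The paper itself does not spell out a proof of Proposition~\ref{characterization}; it defers to \cite[Section 5]{DJ}. However, the characterization half of your argument is literally the same unpacking that the paper carries out explicitly when proving the closely related Proposition~\ref{lift} (the generalization from $\id_Z$ to an arbitrary map $f$): observing that $p_Z^{-1}(z) = P(z,z_0)$, that a map $H:Z\to\B_n(P(Z))$ with $H(z)\in\B_n(P(z,z_0))$ for all $z$ automatically lands in $P(Z)_n$, and that the defining condition on $H$ is exactly $\B_n(p_Z)\circ H=\id_Z$. For the fibration half, your plan — build a path-lifting function for $\B_n(p_Z)$ by applying the fiberwise $\B_n$ functor to the standard lifting function for the path fibration $p_Z$ — is the expected argument and the one the referenced source takes. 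Two small remarks: the non-uniqueness of the representation $\mu=\sum\lambda_i\delta_{\phi_i}$ that you flag as the main obstacle is in fact a non-issue, because the pushforward $F_*\mu$ is intrinsically defined by $F_*\mu(A)=\mu(F^{-1}(A))$ and does not depend on a choice of atoms and weights; and the joint continuity in $(\mu,\alpha,t)$ of the lifted measure is best obtained, as you note at the end, by phrasing everything as a single application of the functor $\B_n$ to a continuous map on a pullback (using that $\B_n$ sends continuous maps of metric spaces to continuous maps in the L\'evy--Prokhorov metric), rather than by ad hoc perturbation estimates on atoms.
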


We note that the distributional category of a space is a special case of the general notion of the ``distributional sectional category" of a fibration, denoted $\mathsf{dsecat}$, which is the distributional analog of the classical notion of the sectional category~\cite{Sch},~\cite{Ja}. We refer the reader to~\cite[Section 5]{J1} for details on $\mathsf{dsecat}$.

Given a map $f: (X,x_0) \to (Y,y_0)$, in analogy with the nice properties of $\cat(f)$ discussed in~\cite{BG} and~\cite[Section 7]{Ja}, most of the statements for $\dcat(X)$ obtained in~\cite{DJ} can be easily generalized to get the corresponding statements for $\dcat(f)$. 

\begin{prop}\label{obvious}
\begin{enumerate}
    \itemsep-0.2em 
    \item  $\dcat$ is a homotopy invariant of pointed maps.
    \item $\dcat(f) \le \min\{\cat(f),\dcat(X),\dcat(Y)\}$.\label{t2}
    \item For any map $g:(Y,y_0) \to (Z,z_0)$, $\dcat(g \circ f) \le \min\{\dcat(f),\dcat(g)\}$.
    \item Let $\beta_i \in H^{k_i}(SP^{n!}(Y);R)$, $1 \le i \le n$, for some ring $R$ and $k_i \ge 1$. If $\alpha_i = (\delta_n\circ f)^*(\beta_i)$ such that $\alpha_1 \smile \cdots \smile \alpha_n \ne 0$, then $\dcat(f) \ge n$.
\end{enumerate}
\end{prop}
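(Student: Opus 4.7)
The plan is to derive all four items as relative versions of the absolute statements already proven for $\dcat(X)$ in \cite[Sections 3.2 and 4.2]{DJ}. Parts (1)--(3) are essentially bookkeeping arguments using natural maps between $G_n(Y)$, $\B_n(P(Y))$, and fiberwise push-forward operations, while part (4) carries the real content and will follow the symmetric product strategy from \cite{DJ}, with $f$ inserted at the right spot.

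For (1), given a pointed homotopy $h_t:(X,x_0)\to (Y,y_0)$ from $f$ to $f'$ and a distributional lift $H:X\to \B_{n+1}(P(Y))$ of $f$, I would build a lift of $f'$ by prepending the homotopy track. Concretely, for each $x\in X$ and each $\phi\in\supp H(x)$, the concatenation of $s\mapsto h_{1-s}(x)$ (a path from $f'(x)$ to $f(x)$) with $\phi$ produces a path from $f'(x)$ to $y_0$; keeping the weights unchanged defines the new measure-valued map, which is continuous because concatenation is continuous on $P(Y)$. For (2), the three inequalities are obtained from three natural maps. The bound $\dcat(f)\le \cat(f)$ uses the forgetful map $G_{n+1}(Y)\to \B_{n+1}(P(Y))$, $\sum \lambda_i\phi_i\mapsto \sum \lambda_i\delta_{\phi_i}$, applied to a Ganea lift of $f$. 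The bound $\dcat(f)\le \dcat(Y)$ follows by precomposing a distributional section of $\B_{n+1}(p_Y)$ (Proposition~\ref{characterization}) with $f$. The bound $\dcat(f)\le \dcat(X)$ follows by applying the push-forward $\B_{n+1}(P(X))\to \B_{n+1}(P(Y))$ induced by $\phi\mapsto f\circ\phi$ to a distributional section for $X$; the condition $f(x_0)=y_0$ guarantees the support condition is preserved. Part (3) is analogous: $\dcat(g\circ f)\le \dcat(f)$ uses the push-forward $P(Y)\to P(Z)$, $\phi\mapsto g\circ\phi$, while $\dcat(g\circ f)\le \dcat(g)$ uses precomposition with $f$ of a distributional lift of $g$.

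The main obstacle is part (4), where I would adapt the rational cup-length estimate from \cite[Section 4.2]{DJ}. Suppose for contradiction that $\dcat(f)<n$, so there exists a continuous $H:X\to \B_n(P(Y))$ with $\mathcal{B}_n(p_Y)\circ H=f$. The strategy is to use the natural map $\B_n(P(Y))\to SP^{n!}(P(Y))$ that sends a rationally weighted measure $\sum(k_i/n!)\delta_{\phi_i}$ to the unordered tuple $[\phi_1^{k_1},\ldots,\phi_s^{k_s}]$ (extending by approximation on general weights), and then post-compose with fiberwise evaluation at $0$ to obtain $\Phi:X\to SP^{n!}(Y)$. A straight-line reparametrization homotopy in $SP^{n!}(Y)$ identifies $\Phi$ with $\delta_n\circ f$ up to homotopy. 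On the other hand, $\Phi$ factors through a subspace of $SP^{n!}(P(Y))$ whose image in cohomology has all $n$-fold cup products zero---the distributional analog of the join-theoretic vanishing driving the $\cat$ version. Naturality then forces $\alpha_1\smile\cdots\smile\alpha_n=(\delta_n\circ f)^*(\beta_1\smile\cdots\smile\beta_n)=0$, contradicting the hypothesis. The delicate steps are making the map $\B_n\to SP^{n!}$ rigorous on arbitrary-weight measures and checking that the cup-product vanishing transfers through the approximation; both are handled in the absolute case in \cite{DJ}, and naturality in $f$ supplies the rest.
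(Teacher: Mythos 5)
The paper itself offers no proof of this proposition --- it simply asserts that the absolute statements from \cite{DJ} ``can be easily generalized'' to $\dcat(f)$ --- so the comparison is really about correctness of your reconstruction, not about matching an argument in the text.

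Parts (1)--(3) are fine. Precomposing a section of $\B_{n+1}(p_Y)$ with $f$, pushing forward along $\phi\mapsto f\circ\phi$, pushing forward along $\phi\mapsto g\circ\phi$, and the forgetful map $G_{n+1}(Y)\to P(Y)_{n+1}$, together with Proposition~\ref{lift}, are exactly the right devices and the support conditions are preserved in each case because $f$ and $g$ are pointed. Prepending the homotopy track gives (1). These are the natural relativizations and I have no objections there.

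Part (4) contains a genuine gap. The map you propose, $\B_n(P(Y))\to SP^{n!}(P(Y))$ sending $\sum(k_i/n!)\,\delta_{\phi_i}\mapsto[\phi_1^{k_1},\dots,\phi_s^{k_s}]$ and ``extending by approximation,'' does not exist as a continuous map. Already for $n=2$: a natural continuous $q:\B_2(Z)\to SP^{2}(Z)$ compatible with $z\mapsto\delta_z$ and the diagonal $z\mapsto[z,z]$ would, along the path $\mu_\lambda=\lambda\delta_{z_0}+(1-\lambda)\delta_{z_1}$ with $z_0\ne z_1$ fixed, have to pass continuously among the three isolated multisets $[z_0,z_0]$, $[z_0,z_1]$, $[z_1,z_1]$; continuity forces the value to be locally constant, contradicting $q(\mu_0)=[z_1,z_1]$ and $q(\mu_1)=[z_0,z_0]$. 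The obstruction is exactly the integrality of multiplicities, and approximating by measures with weights in $\frac{1}{n!}\Z$ does not help --- it makes the discontinuity worse, since the map is then only defined on a nowhere-dense subset whose image is discrete in $SP^{n!}(P(Y))$. So the appearance of $SP^{n!}(Y)$ in the statement cannot come from a direct ``rounding'' map $\B_n\to SP^{n!}$, and the argument in \cite[Section 4.2]{DJ} must be structured differently. (Your observation that, once one \emph{does} have such a factorization, inserting $f$ is harmless, is correct; the missing piece is the factorization itself. Note also that after evaluation at time $0$ your $\Phi$ would already equal $\delta_n\circ f$ on the nose, no reparametrization homotopy needed --- a sign that the intended route uses the time parameter more substantially than just at the endpoints.) Before finalizing, you should look up the actual construction in \cite{DJ} and relativize \emph{that}; the strategy ``insert $f$ into the absolute argument'' is sound, but the absolute argument is not the one you have sketched.
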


Here, $\delta_n:Y \to SP^{n!}(Y)$ is the diagonal embedding into the $n!$ symmetric power of $Y$, which is defined as the orbit space of the natural action of the symmetric group $S_{n!}$ on the product space $Y^{n!}$. 

The following characterization was stated in~\cite[Proposition 2.8]{Dr2} as an extension of the second part of Proposition~\ref{characterization}. Here, we provide a simple proof.

\begin{prop}\label{lift}
    Given a pointed map $f:(X,x_0)\to (Y,y_0)$, $\dcat(f) < n$ if and only if there exists a lift of $f$ with respect to the fibration $\B_n(p_Y):P(Y)_n \to Y$.
\end{prop}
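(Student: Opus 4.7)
The plan is to prove both implications by directly unwinding the definitions of $\dcat(f)$ and of $P(Y)_n$, noting two simple facts: (i) every path in $P(Y)$ ends at $y_0$ by construction, so any measure in $\B_n(P(Y))$ is automatically supported on paths terminating at $y_0$; and (ii) the space $P(Y)_n$ is precisely the union $\bigcup_{y\in Y}\B_n(p_Y^{-1}(y))$, so an element $\mu\in\B_n(P(Y))$ lies in $P(Y)_n$ if and only if there is some $y\in Y$ with $\supp(\mu)\subset p_Y^{-1}(y)=P(y,y_0)$, in which case $\B_n(p_Y)(\mu)=y$.

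For the forward direction, I would begin with a witness $H:X\to \B_n(P(Y))$ to $\dcat(f)<n$, so that $H(x)\in\B_n(P(f(x),y_0))$ for every $x\in X$. Observation (ii) then says precisely that $H(x)\in P(Y)_n$ with $\B_n(p_Y)(H(x))=f(x)$, so viewing $H$ as a map $\tilde f:X\to P(Y)_n$ exhibits it as a lift of $f$ along $\B_n(p_Y)$. Continuity of $\tilde f$ is inherited from continuity of $H$ because $P(Y)_n$ carries the subspace topology from $\B_n(P(Y))$.

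For the converse, given a lift $\tilde f:X\to P(Y)_n$ of $f$, the equation $\B_n(p_Y)\circ\tilde f=f$ forces $\supp(\tilde f(x))\subset p_Y^{-1}(f(x))=P(f(x),y_0)$, where the final identification uses (i). Composing with the inclusion $P(Y)_n\hookrightarrow \B_n(P(Y))$ then produces a continuous map $X\to \B_n(P(Y))$ landing in $\B_n(P(f(x),y_0))$ on $x$, which is exactly the data certifying $\dcat(f)<n$.

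The proof is essentially bookkeeping, so I do not anticipate a real obstacle; the only mild care needed is to track the endpoint condition on paths, which is automatic from the definition of $P(Y)$, and to observe that the subspace topology on $P(Y)_n$ makes both passages between $H$ and $\tilde f$ continuous. Proposition~\ref{characterization} already ensures that $\B_n(p_Y)$ is the correct fibration to lift against, so the statement becomes a direct parametrization of the same data in two ways.
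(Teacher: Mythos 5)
Your proof is correct and takes essentially the same route as the paper's: both directions simply unwind the definitions of $\dcat(f)$ and $P(Y)_n$, observing that a witnessing map $H$ has image in $P(Y)_n$ and satisfies $\B_n(p_Y)\circ H = f$, and conversely that a lift has supports constrained to $P(f(x),y_0)$. Your extra remarks on the subspace topology and the endpoint condition are harmless amplifications of what the paper leaves implicit.
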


\begin{proof}
    If $\dcat(f) < n$, then for a basepoint $y_0 \in Y$, there exists a continuous map $H: X \to \B_n(P(Y))$ such that $H(x) \in \B_n(P(f(x),y_0))$ for each $x\in X$. In particular, the image of $H$ is in $P(Y)_n$. By definition of $H$, $\B_n(p_Y)(H(x)) = f(x)$ for all $x$. Hence, $H$ is the required lift. Conversely, if $g: X \to P(Y)_n$ is a lift of $f$ with respect to $\B_n(p_Y)$, then $\B_n(p_Y)(g(x)) = f(x)$ and thus, we must have $g(x) \in \B_n(P(f(x),y_0))$ for all $x \in X$. So, by definition, $\dcat(f) \le n-1$. 
\end{proof}

The following covering map inequality for $\dcat$ (which is a direct analog of the corresponding inequality for $\cat$, see~\cite[Corollary 1.45]{CLOT}) will be very useful in Sections~\ref{threemani} and~\ref{simpobs}.

\begin{theorem}[\protect{\cite[Theorem 3.7]{DJ}}]\label{covv}
    If $p:X\to Y$ is a covering map, then $\dcat(X)\le\dcat(Y)$.
\end{theorem}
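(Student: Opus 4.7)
The plan is to apply the section-theoretic characterization \propref{characterization}: $\dcat(Z) < n$ iff the fibration $\B_n(p_Z): P(Z)_n \to Z$ admits a continuous section. So, assuming $\dcat(Y) < n$, I fix a section $s: Y \to P(Y)_n$ of $\B_n(p_Y)$ and aim to build a continuous section $s': X \to P(X)_n$ of $\B_n(p_X)$. This would give $\dcat(X) < n$, and since $n > \dcat(Y)$ is arbitrary, $\dcat(X) \le \dcat(Y)$.

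Fix basepoints $y_0 \in Y$ and $x_0 \in p^{-1}(y_0)$. As $X$ is compact, $p^{-1}(y_0)$ is a finite discrete set; for each $z \in p^{-1}(y_0)$, choose once and for all a path $\alpha_z \in P(X)$ from $z$ to $x_0$, with $\alpha_{x_0}$ the constant path. Using unique path lifting for the covering map $p$, I define
\[
\Phi : E := \left\{ (x, \phi) \in X \times P(Y) : \phi(0) = p(x) \right\} \To P(X), \qquad \Phi(x,\phi) = \tilde{\phi}_x \cdot \alpha_{\tilde{\phi}_x(1)},
\]
where $\tilde{\phi}_x$ is the unique lift of $\phi$ starting at $x$ and $\cdot$ is concatenation with standard reparameterization. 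By construction $\Phi(x, \phi) \in P(x, x_0)$, so fiberwise over $X$ we obtain $\Phi_x: P(p(x), y_0) \to P(x, x_0)$. Pulling back $s$ along $p$ yields a continuous map $\tilde{s}: X \to \B_n(E)$ with $\tilde{s}(x)$ being $s(p(x))$ identified with a measure supported in the fiber $P(p(x), y_0) \subset E$ over $x$. I then set $s' := \B_n(\Phi) \circ \tilde{s}$; by the single-fiber support, $s'$ lands in $P(X)_n$, and by construction it is a section of $\B_n(p_X)$.

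The main obstacle is the continuity of $\Phi$, which I would split into three pieces: (i) joint continuity of the path-lifting map $(x, \phi) \mapsto \tilde{\phi}_x$, standard since covering maps are Hurewicz fibrations with unique lifts determined by the initial point; (ii) local constancy of the endpoint map $(x, \phi) \mapsto \tilde{\phi}_x(1) \in p^{-1}(y_0)$, which forces $(x, \phi) \mapsto \alpha_{\tilde{\phi}_x(1)}$ to be locally constant into the finite set $\{\alpha_z : z \in p^{-1}(y_0)\}$; and (iii) continuity of concatenation and reparameterization on path spaces. Once continuity of $\Phi$ is established, functoriality of $\B_n$ gives continuity of $\B_n(\Phi)$, and continuity of $\tilde{s}$ is immediate from that of $s$ and $p$, so the desired section $s'$ is continuous and \propref{characterization} finishes the argument.
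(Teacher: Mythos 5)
The paper cites \cite[Theorem 3.7]{DJ} for this result rather than reproving it, so there is no in-paper proof to compare against; your argument is correct and is the natural proof given the section characterization in Proposition~\ref{characterization}. Lifting a section of $\B_n(p_Y)$ to one of $\B_n(p_X)$ by parametrized unique path lifting, re-targeting endpoints to $x_0$ via a fixed choice of path from each point of the discrete fiber $p^{-1}(y_0)$, and pushing measures forward through $\B_n(\Phi)$ is exactly the construction one expects here, and the continuity checks (i)--(iii) that you isolate are precisely the ones that need to be verified.
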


\subsection{Topological basics}\label{basics} Given a discrete group $\Gamma$, its \emph{Berstein--Schwarz class}, denoted $\beta_\Gamma$, is the first obstruction to a lift of the classifying space $B\Gamma$ to its universal cover $E\Gamma$. If $I(\Gamma)$ denotes the augmentation ideal of the group ring $\Z\Gamma$, then $\beta_\Gamma\in H^1(B\Gamma;I(\Gamma))$. We refer the reader to~\cite[Section 2]{DKR} for a nice review of cohomology with local coefficients (see also~\cite[Section 3.H]{Ha}).

The Berstein--Schwarz class is ``universal" in some sense, see~\cite{Sch} and~\cite{DR}. The following well-known result, called the Berstein--Schwarz theorem, relates the cohomological dimension of a discrete group with its Berstein--Schwarz class.

\begin{theorem}[\protect{\cite{Sch},~\cite{DR}}]\label{bs}
    For a discrete group $\Gamma$, $\cd(\Gamma) = \max\{k \in \Z\mid \beta_\Gamma^k\ne 0\}$.
\end{theorem}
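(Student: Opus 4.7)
The strategy is to prove two inequalities, setting $c := \max\{k\in\Z\mid\beta_\Gamma^k\ne 0\}$. The inequality $c\le\cd(\Gamma)$ is immediate: the class $\beta_\Gamma^k$ lives in $H^k(B\Gamma;I(\Gamma)^{\otimes k})$, where $I(\Gamma)^{\otimes k}$ is given a $\Z\Gamma$-module structure via the diagonal action. By the definition of cohomological dimension, this cohomology group vanishes whenever $k>\cd(\Gamma)$, and hence $\beta_\Gamma^k = 0$ for all such $k$.

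For the reverse inequality $\cd(\Gamma)\le c$, the heart of the matter is the \emph{universality} of the Berstein--Schwarz class: for every $\Z\Gamma$-module $A$ and every class $\alpha\in H^k(B\Gamma;A)$, there exists a $\Z\Gamma$-module homomorphism $\phi:I(\Gamma)^{\otimes k}\to A$ such that the induced coefficient-change map sends $\beta_\Gamma^k$ to $\alpha$. Granting universality, suppose $\cd(\Gamma)=n$, so that some $H^n(B\Gamma;A)$ contains a nonzero class $\alpha$. If $\beta_\Gamma^n$ were zero, then by naturality we would have $\alpha=\phi_*(\beta_\Gamma^n)=0$, a contradiction; hence $\beta_\Gamma^n\ne 0$ and $c\ge n$. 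This reduces everything to the universality statement.

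The main obstacle, therefore, is to establish universality. The plan is algebraic: choose a CW model of $B\Gamma$ with a single $0$-cell, so that the cellular chain complex of the universal cover $E\Gamma$ gives a free $\Z\Gamma$-resolution $\cdots\to C_1\to C_0 = \Z\Gamma\to\Z\to 0$ whose augmentation kernel is precisely $I(\Gamma)$. In this language, $\beta_\Gamma$ is represented in $\operatorname{Ext}^1_{\Z\Gamma}(\Z,I(\Gamma))$ by the short exact sequence $0\to I(\Gamma)\to\Z\Gamma\to\Z\to 0$. Under the standard identification of the topological cup product on $H^*(B\Gamma;-)$ with the algebraic Yoneda product on $\operatorname{Ext}^*_{\Z\Gamma}(\Z,-)$, the power $\beta_\Gamma^k$ corresponds to the $k$-fold Yoneda splice of this extension, a distinguished element of $\operatorname{Ext}^k_{\Z\Gamma}(\Z, I(\Gamma)^{\otimes k})$.

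Finally, given $\alpha\in\operatorname{Ext}^k_{\Z\Gamma}(\Z,A)$ represented by a $k$-fold extension (or equivalently a cellular $A$-valued cocycle), the coefficient homomorphism $\phi:I(\Gamma)^{\otimes k}\to A$ is produced by comparing this extension with the Yoneda splice representing $\beta_\Gamma^k$: freeness of the resolution allows one to lift the identity on $\Z$ to a chain map between the two $k$-fold extensions, and the induced map on the leftmost terms is the required $\phi$; exactness makes $\phi$ well-defined up to the relations defining $\operatorname{Ext}^k$, and by construction $\phi_*(\beta_\Gamma^k)=\alpha$. The delicate step will be aligning the topological cup product with the Yoneda product in a manner compatible with coefficient changes of local systems, which is where most of the bookkeeping lies; once this is fixed, the construction of $\phi$ is essentially forced by homological algebra and the theorem follows.
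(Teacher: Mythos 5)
The paper does not prove this theorem; it is stated as a known result with citations to Schwarz [Sch] and Dranishnikov--Rudyak [DR], so there is no in-text proof to compare against. Your outline correctly reproduces the shape of the standard argument: $c\le\cd(\Gamma)$ by degree reasons, and $\cd(\Gamma)\le c$ by reducing everything to the universality of the Berstein--Schwarz class, which you then attack by a comparison of resolutions.

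As written, though, the universality step has a genuine gap. You describe $\beta_\Gamma^k$ as ``the $k$-fold Yoneda splice of this extension'' (meaning $0\to I(\Gamma)\to\Z\Gamma\to\Z\to 0$), but that sequence ends on the right in $\Z$ and begins on the left with $I(\Gamma)$, so it cannot be spliced with itself. The correct construction first tensors it over $\Z$ with $I(\Gamma)^{\otimes(j-1)}$ (diagonal $\Gamma$-action) to get $0\to I(\Gamma)^{\otimes j}\to\Z\Gamma\otimes I(\Gamma)^{\otimes(j-1)}\to I(\Gamma)^{\otimes(j-1)}\to 0$, and then splices these, producing the $k$-fold exact sequence $0\to I(\Gamma)^{\otimes k}\to\Z\Gamma\otimes I(\Gamma)^{\otimes(k-1)}\to\cdots\to\Z\Gamma\otimes I(\Gamma)\to\Z\Gamma\to\Z\to 0$. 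Even granting this, the comparison-theorem lift you invoke requires the middle terms of this sequence to be $\Z\Gamma$-projective, and that is the crux you omit. The needed lemma is that $\Z\Gamma\otimes M$ with the diagonal action is isomorphic, via $g\otimes m\mapsto g\otimes g^{-1}m$, to $\Z\Gamma$ tensored with the underlying trivial module of $M$, hence is $\Z\Gamma$-free whenever $M$ is $\Z$-free, as each $I(\Gamma)^{\otimes(j-1)}$ is. Without establishing this freeness, the lift of $\mathrm{id}_\Z$ to a chain map into the extension representing $\alpha$ is not justified and $\phi$ need not exist; with it, the tensored-and-spliced sequence becomes a partial free resolution whose $k$-th syzygy is $I(\Gamma)^{\otimes k}$, and your homological-algebra endgame goes through.
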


Given a fibration $f:E \to B$, where $B$ is a  CW complex and the fiber $F$ is $(n-2)$-connected for some $n\ge 3$, the \emph{primary obstruction} to a section of $f$ is a cohomology class $\kappa_n \in H^n(B;H_{n-1}(F))$, where coefficients are in the $\pi_1(B)$-module $H_{n-1}(F)$. 

For a discrete group $\Gamma$, it follows from~\cite{Sch} that the $n$-th power of its Berstein--Schwarz class, $\beta_\Gamma^n$, is the primary obstruction to a section of the $n$-th Ganea fibration $p_\Gamma:G_n(B\Gamma)\to B\Gamma$ defined in Section~\ref{lastnew}.

In our setting, for the fibration $\B_n(p_X):P(X)_n\to X$, the fiber is $\B_n(\Omega X)$ for the loop space $\Omega X$ and it is $(n-2)$-connected due to~\cite[Theorem 3.1]{Dr2} whenever $X$ is a CW complex. Hence, the fibration $\B_n(p_X)$ admits a section on the $(n-1)$-skeleton of $X$. So, the primary obstruction to a section of $\B_n(p_X)$, say $\kappa_n$, is a cohomology class of dimension $n$, i.e., $\kappa_n \in H^n(X;\F)$ for a $\pi_1(X)$-module $\F$. If $\dim(X)=n$, then $\kappa_n$ is the only obstruction to a section of $\B_n(p_X)$. Thus, in that case, $\kappa_n = 0$ if and only if $\B_n(p_X)$ admits a section. 

Moving forward, we reserve the notation $\kappa_n$ for such primary obstructions.

We end this section by recalling Poincaré duality. Given a closed manifold $M$ of dimension $n$, the orientation sheaf $\O_M$ on $M$ defines a fundamental class $[M]$ of $M$ such that for any local coefficients $\A$ on $M$ and integer $k \in \{0,\ldots,n\}$, the $k$-th homomorphism
\[
\text{PD}_{k}: H^k(M;\A) \to H_{n-k}(M;\A \otimes \O_M),
\]
that sends each $\alpha \in H^k(M;\A)$ to the cap product $[M]\frown \alpha \in H_{n-k}(M;\A \otimes \O_M)$, is an isomorphism, see~\cite[Chapter V, Sections 9 and 10]{Bre}. If a mapping $f:M \to N$ takes the orientation sheaf $\O_N$ to $\O_M$, then the \emph{degree} of $f$, denoted $\text{deg}(f)$, is the integer for which $f_*([M]) = \text{deg}(f)[N]$. If $\text{deg}(f) = \pm 1$, then $f$ induces an epimorphism $f_{\bullet}:\pi_1(M) \to \pi_1(N)$ of the fundamental groups.

\section{For essential manifolds}\label{essential}

Let $M$ be a closed $n$-manifold and $\pi: = \pi_1(M)$. We follow Gromov~\cite{Gr} and say that $M$ is \emph{essential} if there exists a classifying map $f:M \to B\pi$ that induces an isomorphism of the fundamental groups such that $f$ cannot be deformed into the $(n-1)$-skeleton of $B\pi$ (see also~\cite[Definition 3.1]{BD}). Some obvious examples of essential manifolds include aspherical manifolds. Non-aspherical examples include $\R P^n$ for each $n\ge 2$.

There are several equivalent definitions of essential manifolds. 

\begin{theorem}[\protect{\cite{KR}}]\label{iff}
    For a closed $n$-manifold $M$, the following are equivalent. 
        \vspace{-2mm}
\begin{enumerate}
    \itemsep-0.25em 
        \item $M$ is essential.\label{oneoone}
        \item $\cat(M) = \dim(M) = n$.\label{two}
        \item There exists a coefficient system $\A$ and a map $f:M\to B\pi$ such that the induced homomorphism $f^*:H^n(B\pi;\A) \to H^n(M;f^*(\A))$ is non-trivial.\label{333}
    \end{enumerate}
\end{theorem}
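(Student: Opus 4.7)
The plan is to identify a single cohomology class that governs all three conditions, namely the pullback $f^*(\beta_\pi^n) \in H^n(M; f^*(I(\pi)^{\otimes n}))$ of the $n$-th power of the Berstein--Schwarz class along a classifying map $f: M \to B\pi$, where $\pi = \pi_1(M)$. The main inputs are Theorem~\ref{bs} and the universality of $\beta_\pi$ for degree-$n$ cohomology of $B\pi$ with local coefficients.

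I would first prove (\ref{oneoone}) $\Leftrightarrow$ (\ref{333}). By classical obstruction theory, $f$ deforms into the $(n-1)$-skeleton $B\pi^{(n-1)}$ iff the primary obstruction to such a deformation vanishes; by universality of $\beta_\pi$, this obstruction is precisely $f^*(\beta_\pi^n)$. Hence $M$ is essential iff $f^*(\beta_\pi^n) \ne 0$, which is condition (\ref{333}) for the choice $\A = I(\pi)^{\otimes n}$. Conversely, if (\ref{333}) holds for some local system $\A$, then any nonzero class in $H^n(B\pi;\A)$ detected by $f^*$ factors, via universality, through a cup power of $\beta_\pi$; consequently $f^*(\beta_\pi^n) \ne 0$ and $M$ is essential.

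Next, I would prove (\ref{oneoone}) $\Leftrightarrow$ (\ref{two}) through the Ganea fibration $p_n^M: G_n(M) \to M$. By the Ganea--Schwarz characterization recalled in Section~\ref{lastnew}, $\cat(M) < n$ iff $p_n^M$ admits a section. The fiber of $p_n^M$, being the $n$-fold fiberwise join of the path-space fibre $\Omega M$, is $(n-2)$-connected, so the primary obstruction to a section lies in $H^n(M;\cdot)$. By naturality of $G_n$ under $f: M \to B\pi$ and the identification from the previous step, this primary obstruction equals $f^*(\beta_\pi^n)$. Since $\dim(M) = n$, all higher obstructions vanish automatically for cohomological reasons, so the primary obstruction is in fact the complete obstruction. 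Thus $\cat(M) < n$ iff $f^*(\beta_\pi^n) = 0$ iff $M$ is not essential. Combined with the standard upper bound $\cat(M) \le \dim(M) = n$, this gives (\ref{oneoone}) $\Leftrightarrow$ (\ref{two}).

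The main obstacle is the second step: identifying the primary obstruction to a section of $p_n^M$ with $f^*(\beta_\pi^n)$ and confirming that no higher obstructions survive. The identification rests on Berstein--Schwarz universality together with the naturality of Ganea fibrations under classifying maps (so that the obstruction for $M$ is the $f^*$-pullback of the obstruction for $B\pi$, which is $\beta_\pi^n$), while the vanishing of higher obstructions uses crucially that $M$ is a closed CW-manifold of dimension exactly $n$.
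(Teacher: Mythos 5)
The paper does not prove Theorem~\ref{iff}; it is cited from \cite{KR} (and the relevant ingredient is also \cite[Theorem 2.51]{CLOT}), so there is no internal proof to compare against. Your outline matches the standard treatment in the literature and the overall plan is sound: everything is reduced to the nonvanishing of $f^*(\beta_\pi^n)$, with (\ref{oneoone}) $\Leftrightarrow$ (\ref{333}) handled by universality of the Berstein--Schwarz class and (\ref{oneoone}) $\Leftrightarrow$ (\ref{two}) by obstruction theory on the Ganea fibration.

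Two points need to be filled in. First, in your argument for (\ref{333}) $\Rightarrow$ (\ref{oneoone}) you implicitly treat the $f$ of condition~(\ref{333}) as a classifying map, but~(\ref{333}) quantifies over arbitrary maps $f:M\to B\pi$; you need to note that any such $f$ is homotopic to $B\rho\circ g$ for a classifying map $g$ and a self-map $B\rho$ of $B\pi$, so $f^*=g^*\circ(B\rho)^*$ and nonvanishing of $f^*$ forces nonvanishing of $g^*$. Second, and more substantively, the assertion that the primary obstruction to a section of $p_n^M$ ``equals'' $f^*(\beta_\pi^n)$ is not automatic from naturality alone: the obstruction to a section of $p_n^M$ lives a priori in $H^n(M;\pi_{n-1}(\Omega M^{*n}))$, while $f^*(\beta_\pi^n)$ lives in $H^n(M;f^*(I(\pi)^{\otimes n}))$. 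One must check that the map of fibrations $G_n(M)\to G_n(B\pi)$ over $f$ induces an isomorphism on $\pi_{n-1}$ of the fibers; this does hold for $n\ge 3$, since the fibers $\Omega M^{*n}$ and $\Omega B\pi^{*n}$ are $(n-2)$-connected (as $(n-1)$-fold suspensions up to homotopy) and both have $\pi_{n-1}\cong\tilde H_0(\Omega M)^{\otimes n}\cong I(\pi)^{\otimes n}$ compatibly with the map, because $\Omega M\to\Omega B\pi$ is a $\pi_0$-bijection. With that coefficient-module comparison supplied, your argument is correct; this is essentially Berstein's theorem and is the route taken in \cite{CLOT} and \cite{KR}.
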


If $\A=\Q$ in part~(\ref{333}) above, then $M$ is called \emph{rationally essential}.

Of course, if $\dcat(M) = \dim(M)=n$, then $M$ is essential in view of the implication~(\ref{two})$\implies$(\ref{oneoone}) above. However, we note that an analog of Theorem~\ref{iff} does not hold for $\dcat$: even though $\R P^n$ is essential for each $n$, we have $\dcat(\R P^n) = 1 < n$ for all $n \ge 2$.

So, it is natural to look for essential manifolds whose distributional category is equal to the dimension of the manifold. 

\subsection{Torsion-free fundamental groups}\label{torsionfree}
We now restrict our attention to closed essential manifolds having torsion-free fundamental groups. Let $M$ be an essential $n$-manifold such that $\pi:=\pi_1(M)$ is torsion-free, and $f:M \to B\pi$ be a classifying map as above that cannot be deformed into the $(n-1)$-skeleton of $B\pi$. Since $M$ is essential, $(f^*(\beta_\pi))^n \ne 0$ for the Berstein--Schwarz class $\beta_\pi \in H^1(B\pi;I(\pi))$, see~\cite[Theorem 2.51]{CLOT} and also~\cite[Theorem 4.1]{KR}. This means $\beta_\pi^n \ne 0$ and thus, by Theorem~\ref{bs}, $\cd(\pi) \ge n$. Due to Theorem~\ref{eg}, we get $\dcat(B\pi)\ge n$. Hence, by Proposition~\ref{characterization}, the fibration $\B_n(p_\pi):P(B\pi)_n\to B\pi$ does not admit a section. Therefore, if $\dim(B\pi)=n$, then the primary obstruction $\kappa_n\in H^n(B\pi;\F)$ to a section of $\B_n(p_\pi)$ is non-zero.

\begin{defn}\label{capp}
        We say that $M$ satisfies the \emph{cap property} if $f_*([M]) \frown \kappa_n \ne 0$.
\end{defn}

\begin{remark}\label{formal}
    Let us formalize the cap property. The orientation sheaf $\O_M$ on $M$ is uniquely determined by the kernel of the homomorphism $w:\pi\to\Z_2=\text{Aut}(\Z)$, where $w$ is the first Stiefel--Whitney class of $M$, see~\cite[Page 32]{Ba}. Hence, for $M$ to have (co)homology with coefficients in $\O_M$ is the same as having (co)homology with coefficients in the twisted integers $\Z_w$. Since $[M]\in H_n(M;\O_M)=H_n(M;\Z_w)$ and $f:M\to B\pi$ induces an isomorphism of the fundamental groups, we have that $f_*([M])\in H_n(B\pi;\Z_w)$. Therefore, $f_*([M])\frown\kappa_n \in H_0(B\pi;\Z_w\otimes\F)$, where $\Z_w\otimes\F$ is a $\Z\pi$-module.
\end{remark}

\begin{proof}[Proof of Theorem~\ref{1}]
    Let $\kappa_n\in H^n(B\pi;\F)$ be the primary obstruction to a section of the fibration $\B_n(p_\pi):P(B\pi)_n\to B\pi$ and let $f:M\to B\pi$ be a classifying map that cannot be deformed into the $(n-1)$-skeleton of $B\pi$. Then the image $f^*(\kappa_n) \in H^n(M;f^*(\F))$ is the primary obstruction to a lift of $f$  with respect to $\B_n(p_\pi)$. Since $\dim(M)=n$, the only obstruction to such a lift of $f$ is $f^*(\kappa_n)$. Because $f$ induces an isomorphism (and hence an epimorphism) of fundamental groups, the induced map
    \[
    f_*:H_0(M;f^*(\A)) \to H_0(B\pi;\A)
    \]
    is an isomorphism for each coefficient system $\A$. Therefore, due to the cap property,
    \[
    f_*\left([M]\frown f^*(\kappa_n) \right) = f_*([M]) \frown \kappa_n \ne 0
    \]
    in $\Z_w\otimes\F$ coefficients (see Remark~\ref{formal}). Thus, $f^*(\kappa_n) \ne 0$. So, $f$ does not admit a lift with respect to $\B_n(p_\pi)$. Hence, by Proposition~\ref{lift}, we get $\dcat(f)\ge n$. This gives
    \[
    n \le \dcat(f) \le \dcat(M) \le \cat(M) = \dim(M) = n
    \]
    due to Proposition~\ref{obvious}~(\ref{t2}) and Theorem~\ref{iff}.
\end{proof}

\begin{cor}\label{main1}
    Let $M$ be a closed essential $n$-manifold with $\pi:=\pi_1(M)$. If $B\pi$ is a closed $n$-manifold, then $\dcat(M) = \cat(M)=n$.
\end{cor}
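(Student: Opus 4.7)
The plan is to verify that $M$ satisfies the cap property of \defref{capp} and then invoke \theoref{1}. Two preliminary observations set the stage: since $B\pi$ is a closed $n$-manifold, it is a finite-dimensional $K(\pi,1)$, so $\cd(\pi) \le n$, which forces $\pi$ to be torsion-free (any group with torsion has infinite cohomological dimension); and since $B\pi$ is aspherical, \theoref{eg} combined with \theoref{bs} gives $\dcat(B\pi) = \cd(\pi) = n$, so by \propref{characterization} the primary obstruction $\kappa_n \in H^n(B\pi; \F)$ to a section of $\B_n(p_\pi)$ is nonzero.

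The crucial ingredient is Poincar\'e duality on $B\pi$ itself. Because $B\pi$ is a closed $n$-manifold, cap product with $[B\pi]$ is an isomorphism $H^n(B\pi; \F) \to H_0(B\pi; \Z_w \otimes \F)$, so $[B\pi] \frown \kappa_n \ne 0$. Thus $B\pi$ trivially satisfies the cap property with respect to its identity classifying map. To bootstrap to the given essential $M$, I would choose a classifying map $f: M \to B\pi$ that cannot be deformed into the $(n-1)$-skeleton of $B\pi$; since $f$ is a $\pi_1$-isomorphism between closed $n$-manifolds, $f_*([M])$ lies in $H_n(B\pi; \Z_w) \cong \Z \cdot [B\pi]$ and equals $d \cdot [B\pi]$ for some $d \in \Z$, where $d \ne 0$ is a reformulation of essentiality (otherwise $f$ could be deformed off a top-dimensional cell of $B\pi$). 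By naturality of the cap product,
\[
f_*([M]) \frown \kappa_n = d \cdot \bigl([B\pi] \frown \kappa_n\bigr),
\]
which yields the required nonzero element; the cap property for $M$ is then verified, and \theoref{1} concludes $\dcat(M) = \cat(M) = n$.

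The main obstacle is justifying the final nonvanishing claim, i.e., ruling out that $d \cdot \kappa_n = 0$ in $H^n(B\pi; \F)$ when $|d| > 1$. In prototypical constructions such as $M \simeq B\pi \#(S^k \times S^{n-k})$, the classifying map already has degree $\pm 1$ and the issue is vacuous. For the general case, one invokes the link between $\kappa_n$ and the Berstein--Schwarz class $\beta_\pi$ from \theoref{bs}: since $\beta_\pi^n$ has infinite order in $H^n(B\pi; I(\pi)^{\otimes n})$ and $\kappa_n$ is the obstruction class encoded by $\beta_\pi^n$, multiplication by any nonzero integer $d$ preserves nontriviality, closing the loop.
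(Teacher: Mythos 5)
Your approach is essentially the same as the paper's: verify the cap property of \defref{capp} for $M$ by appealing to Poincar\'e duality in $B\pi$ and the non-vanishing of the degree of the classifying map, then invoke \theoref{1}. Your preliminary observations are fine (the torsion-freeness of $\pi$ is indeed forced by $\cd(\pi)\le n<\infty$, and $\kappa_n\ne 0$ follows from $\dcat(B\pi)=n$ as you say), and the identity $f_*([M])\frown\kappa_n=\deg(f)\,\bigl([B\pi]\frown\kappa_n\bigr)$ together with $\deg(f)\ne 0$ is precisely the paper's computation (the paper cites Babenko/Bolotov--Dranishnikov for $f_*([M])\ne 0$, equivalently $\deg(f)\ne 0$).

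Where you part ways with the paper is in the final paragraph, and the resolution you propose there does not hold up. You correctly observe that the issue is whether $d\cdot\bigl([B\pi]\frown\kappa_n\bigr)$, equivalently $d\cdot\kappa_n\in H^n(B\pi;\F)$, can vanish when $|d|>1$; this is a real concern because $H^n(B\pi;\F)\cong H_0(B\pi;\O_\pi\otimes\F)$ is a coinvariants group that need not be torsion-free, and $\deg(f)$ really can exceed $1$ in absolute value (the degree-two branched covers $X^n\to T^{2n}$ of Example~\ref{cover} are exactly such instances to which \corref{main1} is applied). However, your attempted fix conflates two different obstruction classes. The class $\beta_\pi^n$ lives in $H^n(B\pi;I(\pi)^{\otimes n})$ and is the primary obstruction for the Ganea fibration $G_n(B\pi)\to B\pi$, whereas $\kappa_n$ lives in $H^n(B\pi;\F)$ with $\F=\pi_{n-1}\bigl(\B_n(\Omega B\pi)\bigr)$ and is the primary obstruction for $\B_n(p_\pi)$. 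There is a comparison map (forgetting order) from the Ganea fibration to $\B_n(p_\pi)$ under which $\kappa_n$ is the image of $\beta_\pi^n$ in the coefficient change $I(\pi)^{\otimes n}\to\F$, so $\kappa_n\ne 0$ implies $\beta_\pi^n\ne 0$ but not conversely, and information about the order of $\beta_\pi^n$ does not transfer to $\kappa_n$. Moreover, the assertion that $\beta_\pi^n$ has infinite order in $H^n(B\pi;I(\pi)^{\otimes n})$ is itself unjustified. In short: the concern you raise is genuine and worth stating, but the Berstein--Schwarz bridge you build to dismiss it does not carry the weight; the paper itself simply asserts the nonvanishing from Poincar\'e duality without elaborating, so if you wish to address this point you will need a different argument (for instance, a direct analysis of the $\Z\pi$-module $\F$ or of the coinvariants $H_0(B\pi;\O_\pi\otimes\F)$), not an appeal to $\beta_\pi^n$.
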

\begin{proof}
Since $M$ is essential, we have $f_*([M])\ne 0\in H_n(B\pi;\Z_w)$ for a classifying map $f:M\to B\pi$ as above. This follows from~\cite[Theorem 8.2]{Ba}. For the case when $M$ is orientable, this is proved in~\cite[Proposition 3.2]{BD} as well. We note that the proof of~\cite[Proposition 3.2]{BD} also works if $M$ is non-orientable since $f$ takes the orientation sheaf $\O_\pi$ on the closed $n$-manifold $B\pi$ to the orientation sheaf $\O_M$ on $M$. This happens because $\O_M$ and $\O_\pi$ are obtained as pullbacks of the canonical $\Z$-bundle $\O$ on $\R P^{\infty}$ along the maps $w_M:M\to \R P^{\infty}$ and $w_\pi:B\pi\to\R P^\infty$, respectively, that represent the respective first Stiefel--Whitney classes (see~\cite[Section 2.4]{Dr1}), and $w_\pi\circ f=w_M$. In particular, $\text{deg}(f)\ne 0$. Then, since the primary obstruction $\kappa_n\in H^n(B\pi;\F)$ to a section of the fibration $\B_n(p_\pi):P(B\pi)_n\to B\pi$ is also non-zero, we get that
\[
f_*([M])\frown\kappa_n  = \text{deg}(f)[B\pi]\frown\kappa_n = \text{deg}(f)\left([B\pi]\frown\kappa_n\right)\ne 0
\]
due to Poincaré duality in $B\pi$ in $\O_\pi\otimes\F$ coefficients. Thus, $M$ satisfies the cap property. Hence, the conclusion follows from Theorem~\ref{1}.
\end{proof}

This recovers the following statement, which is a consequence of Theorem~\ref{eg}.

\begin{cor}\label{kwobv}
    If $M$ is a closed aspherical $n$-manifold, then $\dcat(M) =n$.
\end{cor}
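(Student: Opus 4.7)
The plan is to derive Corollary~\ref{kwobv} as a direct consequence of what has already been established, without reopening any obstruction-theoretic computation. Two routes are available, and I would mention both briefly before committing to the shortest one.

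The most efficient route is to apply Corollary~\ref{main1}. If $M$ is a closed aspherical $n$-manifold, then $M$ is a $K(\pi,1)$ for $\pi := \pi_1(M)$, so $B\pi \simeq M$ is itself a closed $n$-manifold. Hence the hypothesis of Corollary~\ref{main1} is satisfied as soon as I verify that $M$ is essential. Essentiality of closed aspherical manifolds is standard: the identity map $M \to B\pi$ induces an isomorphism on $\pi_1$ and cannot be deformed into the $(n-1)$-skeleton of $B\pi \simeq M$ (for instance because the fundamental class $[M] \in H_n(M;\O_M)$ is nonzero and would have to push forward to zero in the skeleton). Once essentiality is in hand, Corollary~\ref{main1} delivers $\dcat(M)=\cat(M)=n$ immediately.

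As the author explicitly advertises, the same conclusion follows from Theorem~\ref{eg}. Here the key auxiliary facts are that the fundamental group of a closed aspherical manifold is torsion-free (any finite subgroup $G \le \pi$ would force $BG$ to embed as a subcomplex of the finite-dimensional $M$, contradicting $\cd(G)=\infty$), and that $\cd(\pi)=n$ (the inequality $\cd(\pi)\ge n$ follows from Poincaré duality applied to $M = B\pi$, and $\cd(\pi)\le n$ follows because $B\pi$ has the homotopy type of an $n$-dimensional CW complex). Combined with the homotopy invariance of $\dcat$, Theorem~\ref{eg} yields
\[
\dcat(M) \;=\; \dcat(B\pi) \;=\; \cd(\pi) \;=\; n.
\]

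There is essentially no obstacle here; the only care-point is the verification of essentiality (respectively torsion-freeness plus $\cd(\pi)=n$), and both are well-documented in the literature cited in the paper. I would therefore present the proof in a single short paragraph invoking Corollary~\ref{main1}, with a parenthetical note that the result is also immediate from Theorem~\ref{eg} applied to $\pi_1(M)$.
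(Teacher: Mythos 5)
Your proposal is correct and matches the paper exactly: the paper states that Corollary~\ref{kwobv} ``is a consequence of Theorem~\ref{eg}'' and is also ``recovered'' by Corollary~\ref{main1}, which are precisely the two routes you outline. One tiny phrasing nit: when verifying $\pi_1(M)$ is torsion-free, the point is that the covering space of $M$ corresponding to a finite subgroup $G$ would be a finite-dimensional $K(G,1)$, not that $BG$ embeds as a subcomplex of $M$; this does not affect the soundness of your argument.
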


\subsection{Symplectically aspherical manifolds} To emphasize the utility of Corollary~\ref{main1}, we will present in this section various examples of non-aspherical essential manifolds whose distributional category turns out to be maximum.

Let $(M,\omega)$ be a closed symplectic manifold of dimension $2n$ such that $[\omega]^n \ne 0$, where $[\omega]\in H^2(M;\R)$ is the de Rham cohomology class corresponding to the non-degenerate symplectic $2$-form $\omega$. If $(M,\omega)$ is simply connected, then clearly, $n \le c\ell_{\R}(M) \le \dcat(M) \le \cat(M) \le n$ implies $\dcat(M) = n$. So, we assume that $M$ is not simply connected. Let $c_1 \in H^2(M;\Z)$ denote the first Chern class of $M$ and let $h:\pi_2(M) \to H_2(M;\Z)$ denote the Hurewicz homomorphism. We follow~\cite{Ori} and call $(M,\omega)$ \emph{spherically monotone} if there exists some $\lambda \in \R$ such that the composition 
\[
([\omega]-\lambda c_1)\circ h:\pi_2(M) \to H_2(M;\Z) \to \R
\]
is trivial. When $\lambda = 0$, then $(M,\omega)$ is called \emph{symplectically aspherical}.
Examples of these kinds include symplectic manifolds whose second homotopy groups vanish, such as aspherical manifolds. 

\begin{cor}\label{sympl}
    Let $(M,\omega)$ be a spherically monotone $2n$-manifold with some $\lambda\in\R$ such that $([\omega]-\lambda c_1)^n\ne 0$. If $B\pi_1(M)$ is a closed $2n$-manifold, then $\dcat(M) = 2n$.
\end{cor}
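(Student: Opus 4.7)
The plan is to reduce the statement to Corollary~\ref{main1} by showing that $M$ is essential; the other hypothesis of that corollary, namely that $B\pi_1(M)$ is a closed $2n$-manifold, is already supplied.

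Let $\pi := \pi_1(M)$, pick a classifying map $f : M \to B\pi$, and set $\eta := [\omega] - \lambda c_1 \in H^2(M;\R)$. The main tool I would use is the standard exact sequence coming from the Leray--Serre spectral sequence of the universal covering $\widetilde M \to M \to B\pi$. Since $\widetilde M$ is simply connected we have $H^1(\widetilde M;\R) = 0$, and the low-degree edge terms produce an exact sequence
\[
0 \longrightarrow H^2(B\pi;\R) \xrightarrow{\,f^*\,} H^2(M;\R) \longrightarrow H^2(\widetilde M;\R)^{\pi} \cong \Hom(\pi_2(M),\R),
\]
where the right-hand identification uses the Hurewicz isomorphism together with $\pi_2(\widetilde M) \cong \pi_2(M)$, and the rightmost arrow sends $\beta$ to the homomorphism $\alpha \mapsto \langle \beta, h(\alpha) \rangle$, i.e.\ to evaluation on spherical classes.

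Spherical monotonicity says exactly that $\eta$ lies in the kernel of this evaluation map, so there exists $\alpha \in H^2(B\pi;\R)$ with $f^*(\alpha) = \eta$. Consequently,
\[
f^*(\alpha^n) \;=\; \eta^n \;=\; ([\omega] - \lambda c_1)^n \;\neq\; 0,
\]
so $f^* : H^{2n}(B\pi;\R) \to H^{2n}(M;\R)$ is non-trivial. By Theorem~\ref{iff}(\ref{333}), $M$ is rationally essential, and Corollary~\ref{main1} then yields $\dcat(M) = \cat(M) = 2n$.

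The only non-routine ingredient is the five-term identification of the image of $f^*$ with the kernel of evaluation on spherical classes, but this is a textbook consequence of the Serre spectral sequence once one notes that $\widetilde M$ is simply connected, so I do not expect it to pose a serious obstacle. Beyond that, the argument is a direct combination of results already established in the excerpt.
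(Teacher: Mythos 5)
Your proof is correct and arrives at the same structure as the paper's — both reduce to Corollary~\ref{main1} by establishing that $M$ is essential — but you take a genuinely different route to the essentiality claim. The paper simply cites external references: when $\lambda = 0$ it invokes~\cite[Corollary 4.2]{RO}, and for general $\lambda$ it invokes~\cite[Theorem 4.6]{Ori}, which already assert that a spherically monotone manifold with $([\omega]-\lambda c_1)^n \ne 0$ is essential. You instead reprove this from scratch using the Serre spectral sequence of the universal covering fibration $\widetilde M \to M \to B\pi$: since $\widetilde M$ is simply connected, $E_2^{p,1} = 0$, the edge map $f^* : H^2(B\pi;\R) \to H^2(M;\R)$ is injective, and its image is precisely the kernel of restriction to $\widetilde M$, i.e.\ the classes that vanish on spherical homology. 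Spherical monotonicity puts $\eta := [\omega] - \lambda c_1$ in that kernel, so $\eta = f^*\alpha$ for some $\alpha \in H^2(B\pi;\R)$, and $f^*(\alpha^n) = \eta^n \ne 0$ exhibits $M$ as satisfying condition~(\ref{333}) of Theorem~\ref{iff}. Your approach buys self-containment and makes explicit why spherical monotonicity forces the symplectic (or monotone) class to descend to group cohomology, at the cost of carrying out a spectral-sequence argument that the cited references have already done in essentially this form. One cosmetic remark: the identification you write, $H^2(\widetilde M;\R)^\pi \cong \Hom(\pi_2(M),\R)$, is not literally an equality (the invariants need not be everything, and the target of the edge map is $E_\infty^{0,2}$, a subgroup of $E_2^{0,2}$), but since the relevant map factors injectively through $H^2(\widetilde M;\R) \cong \Hom(\pi_2(M),\R)$, the characterization of $\operatorname{im}(f^*)$ as the kernel of evaluation on spherical classes is still exactly right, and the argument goes through unchanged.
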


\begin{proof}
    When $\lambda=0$, then $M$ is essential due to~\cite[Corollary 4.2]{RO}. In general, $M$ is essential by~\cite[Theorem 4.6]{Ori} if $([\omega]-\lambda c_1)^n\ne 0$. So, Corollary~\ref{main1} applies.
\end{proof}

\begin{remark}
    We note that the LS-category of these symplectic manifolds was calculated in~\cite{RO} and~\cite{Ori} using the notion of category weight (see~\cite{St},~\cite{R1}). For $\dcat$, the author has not been able to develop a ``useful" notion of ``distributional category weight" that can determine $\dcat(M)$. However, our current technique does give many new computations for $\dcat$ of symplectically aspherical manifolds.
\end{remark}

We now focus on closed symplectically aspherical (SA) $2n$-manifolds $(M,\omega)$ for which $B\pi_1(M)$ are closed $2n$-manifolds. 

\begin{ex}[\protect{Aspherical}]\label{asph}
    Let $\mathscr{A}$ denote the class of closed aspherical manifolds that are symplectic. Interesting examples in this class include the following. For each $n \ge 1$, let $\P_n = \prod_{i=1}^n \Sigma_{g_i}$, where $\Sigma_{g_i}$ is the closed orientable surface of genus $g_i \ge 1$. Then $\P_n$ is an aspherical symplectic $2n$-manifold. Thus, $\dcat(\P_n) = 2n$. 
     
    Let $H_3(\R)$ and $H_3(\Z)$ denote, respectively, the real and the discrete Heisenberg groups. Then $\H_3 = H_3(\R)/H_3(\Z)$ is the Heisenberg $3$-manifold whose fundamental group is $H_3(\Z)$. Let $\mathcal{KT} = \H_3 \times S^1$ denote the Kodaira--Thurston manifold. It is an aspherical symplectic $4$-manifold~\cite{RT}. Thus, $\dcat(\mathcal{KT}) = 4$. 
    
    Let $G$ be a simply connected completely solvable Lie group of dimension $2n$ and $\Gamma$ be a co-compact lattice in $G$. Then the quotient $G/\Gamma$ is a closed aspherical $2n$-manifold. If there exists $\alpha\in H^2(G/\Gamma;\R)$ such that $\alpha^n\ne 0\in H^{2n}(G/\Gamma;\R)$, then $G/\Gamma$ has a symplectic structure by~\cite[Lemma 2.2]{IKRT} and thus, it is SA. Hence, $\dcat(G/\Gamma) = 2n$ is obtained. 
    \end{ex}

However, to get new computations of $\dcat$ using Corollary~\ref{sympl}, we are interested in non-aspherical SA manifolds (while Gompf~\cite{Go} provides such examples in dimension $4$, we don't know if they satisfy the cap property and so, their $\dcat$ cannot be determined by our methods). Indeed, many such manifolds exist in all even dimensions $\ge 4$ and we obtain new computations of $\dcat$ in all those cases.

\begin{ex}[\protect{In the fourth dimension}]\label{exist}
    Due to~\cite[Corollary 5.4]{KRT}, there exists an SA $4$-manifold $\M^4$ such that $\pi_1(\M^4) = \Z^4$ and $\pi_2(\M^4) \ne 0$. Thus, we get $\dcat(\M^4) = 4$ from Corollary~\ref{sympl}. Let the class of such $4$-manifolds be denoted by $\mathscr{B}$. In Example~\ref{prod}, we shall give such examples in all even dimensions.
\end{ex}

\begin{ex}[\protect{Branched covers}]\label{cover}
    For each fixed $n \ge 2$, let $Y^n = \C^n/\Lambda$ be an abelian variety of complex dimension $n$ for $\Lambda = \Z^{2n}$ ($Y^n$ is a projective $2n$-torus). In particular, $Y^n$ is SA and $\pi_1(Y^n) = \Z^{2n}$. In~\cite[Section 3]{Wa} and~\cite[Section 2]{DCP}, using an ample line bundle $L$ of $Y^n$ and choosing a smooth divisor $B^{n-1}$ in the linear system $|2L|$, a degree two branched cover $f: X^n \to Y^n$ is defined with $B^{n-1}$ as the branch locus. These branched covers are interesting objects in algebraic geometry, see~\cite{Wa},~\cite{DCP}. In particular, $X^n$ is not aspherical due to~\cite[Theorem 2.2]{DCP}. Here, $B^{n-1}$ can be chosen to be a (real) codimension $2$ symplectic submanifold of $Y^n$. So, $X^n$ is an SA $2n$-manifold by~\cite[Lemma 1 and Theorem 3]{Go}. Finally, we note that $\pi_1(X^n) = \pi_1(Y^n) = \Z^{2n}$ due to~\cite[Corollary 3.4]{PT} (see also the proof of~\cite[Theorem 2.2]{DCP}). Hence, Corollary~\ref{sympl} gives $\dcat(X^n) = 2n$. Let the class of such non-aspherical SA branched covers be denoted by $\mathscr{C}$. 
\end{ex}

\begin{ex}[\protect{Products}]\label{prod}
For each $n\ge 1$, we compute $\dcat$ of SA $(2n+4)$-manifolds $M$ for which $\pi_2(M)\ne 0$. Let $\P_n,G/\Gamma\in\mathscr A$ and $\mathcal{M}^4 \in \mathscr{B}$.
    \vspace{-2.5mm}
\begin{enumerate}
    \itemsep-0.3em 
    \item If $M = \P_n \times \M^4$, then $B\pi_1(M) = \P_n \times T^4$. Hence, $\dcat(M)=2n+4$.
        \item If $M = G/\Gamma \times \M^4$, then $B\pi_1(M) = G/\Gamma \times T^4$. Hence, $\dcat(M) =2n+4$.
\end{enumerate}
    \vspace{-1mm}
Here, $T^4$ denotes the $4$-torus. In general, finite products of manifolds in classes $\mathscr{B}$ and $\mathscr{C}$, and their products with manifolds in class $\mathscr{A}$, will give non-aspherical SA manifolds whose $\dcat$ values will be maximum as a result of Corollary~\ref{sympl}.
\end{ex}

\begin{ex}[\protect{From symplectic Lefschetz fibrations}]\label{fibersum}
    For $g \ge 1$, let $\Gamma_g: = \pi_1(\Sigma_g)$. Note that $\Gamma_g$ is finitely presented and the identity map $i_g:\Gamma_g\to\Gamma_g$ is a finite presentation. Clearly, the induced map $i_g^*:H^2(\Gamma_g;\R) \to H^2(\Gamma_g;\R)$ is non-zero. So, by~\cite[Proposition 3.1]{KRT}, there exists a symplectic Lefschetz fibration
    \[
    \Sigma_h \hookrightarrow X  \xrightarrow{f} S^2
    \]
    for some $h \ge g$ with $\pi_1(X) = \Gamma_g$ and $\Omega\in H^2(X;\R)$ such that $\langle \Omega,\phi_*[S^2] \rangle = 0$ and $\psi_y^*(\Omega) \ne 0$ for all smooth maps $\phi:S^2 \to X$ and $\psi_y:f^{-1}(y) \hookrightarrow X$ for $y \in S^2$. For any $k \ge 1$, let $Y_k = \Sigma_h \times \Sigma_k$. Then the Gompf symplectic fiber sum $X \#_{\Sigma_h} Y_k$ (see~\cite[Section 3]{IKRT} for its definition and also Section~\ref{6.1} for a general description of fiber sums) is SA by~\cite[Proposition 3.3]{KRT}. Due to~\cite[Corollary 4.5]{KRT}, 
    \[
    \pi_1\left(X \#_{\Sigma_h} Y_k\right) = \Gamma_g \oplus \Gamma_k.
    \]
    Hence, we apply Corollary~\ref{sympl} to get $\dcat(X \#_{\Sigma_h} Y_k) = 4$. We note that in general, the fiber sums $X \#_{\Sigma_h} Y_k$ need not be aspherical.
\end{ex}

We now explain how for any $n \ge 2$, any direct sum of $n$-number of surface groups (i.e., fundamental groups of closed orientable surfaces $\Sigma_g$ of genus $g \ge 1$) can be realized as the fundamental group of a closed symplectically aspherical $2n$-manifold $M$ that is not aspherical but whose $\dcat(M) = \cat(M)=2n$.

\begin{remark}
    For any $n \ge 2$, let $\Gamma^n := \oplus_{i=1}^n \Gamma_{g_i}$, where $\Gamma_{g_i} = \pi_1(\Sigma_{g_i})$ for $g_i \ge 1$. We saw in Examples~\ref{exist} and~\ref{prod} that if at least two summands in $\Gamma^n$ are $\Z^2$, then there exists a non-aspherical SA $2n$-manifold $M$ such that $\pi_1(M) = \Gamma^n$ and $\dcat(M) = 2n$. Now, let us assume that at most one summand in $\Gamma^n$ is $\Z^2$. Then there exists $j \le n$ such that $g_j \ge 2$. In Example~\ref{fibersum}, we take $g = g_j$ and $k = g_s$ for any fixed $s \ne j$ and get $M_{js} = X \#_{\Sigma_h}Y_{g_s}$ for some $h \ge g_j$. If $n \ge 3$, then we let $\P_{n-2} = \prod_{i=1, i \ne j,s}^n\Sigma_{g_i}$ and consider $M = M_{js} \times \P_{n-2}$. Whenever $M_{js} \ne \Sigma_{g_j} \times \Sigma_{g_s}$, it is not aspherical. So, in those cases, $M$ is a non-aspherical SA $2n$-manifold such that $\pi_1(M) = \Gamma^n$ and $\dcat(M) = 2n$ due to Corollary~\ref{sympl}.
    \end{remark}

\section{For connected sums}\label{connsums}
Let $M$ and $N$ be closed $n$-manifolds and $M\# N$ be their connected sum. An analog of the formula 
\[
\cat(M\#N) = \max\{\cat(M),\cat(N)\}
\]
from~\cite{DS} for the LS-category of connected sum does not hold for $\dcat$, at least when both $\pi_1(M)$ and $\pi_1(N)$ have torsion --- see~\cite[Remark 6.5]{DJ} for a non-orientable example when $M=N=\R P^2$ and Remark~\ref{notinorient} for an orientable example when $M=N=\R P^3$. It is unclear if such a general connected sum formula holds for $\dcat$ if $\pi_1(N)$ is torsion-free. In this section, we provide such a partial formula when $N$ is essential and satisfies the cap property defined in Section~\ref{torsionfree}.

Let $\O_{M}$ and $\O_N$ be the orientation sheaves on the closed $n$-manifolds $M$ and $N$, respectively. Let $B$ denote a closed $n$-disc common to $M$ and $N$, whose interior is deleted from $M$ and $N$ to identify the resulting boundary spheres and form the connected sum $M\# N$. If $\O_B$ is the orientation sheaf on $B$, then we can identify $\O_M$ and $\O_N$ along $\O_B$ to get the orientation sheaf $\O'$ on $M \cup_B N$. Here, $M \cup_B N$ denotes the union of $M$ and $N$ along the closed $n$-disc $B$. Consider the quotient map $\psi:M\cup_BN\to(M\cup_BN)/B = M \vee N$. Since $B$ is contractible, $\psi$ admits a homotopy inverse $\iota:M \vee N \to M\cup_BN$. Then the orientation sheaf $\O$ on $M\vee N$ is obtained by pulling back $\O'$ along $\iota$. If $q:M\# N\to M\cup_B N$ denotes the inclusion, then the restriction of $\O'$ along $q$ gives the orientation sheaf $\O_{\#}$ on $M\# N$. So, in particular, the map $\psi\circ q$ takes $\O$ to $\O_{\#}$.

\begin{proof}[Proof of Theorem~\ref{2}]
     We follow the above notations. Let $d: M \vee N \to N$ and $\phi:M\#N \to N$ be the maps collapsing $M$ to a point, and let $g:N \to B\pi_1(N)$ be a classifying map that cannot be deformed into the $(n-1)$-skeleton of $B\pi_1(N)$. Let us consider the following commutative diagram.
\[
\begin{tikzcd}[contains/.style = {draw=none,"\in" description,sloped}]
&
M\# N \arrow{r}{q} \arrow[swap]{d}{\phi} 
& 
M\cup_B N \arrow{d}{\psi}
\\
B\pi_1(N)
&
N   \arrow{l}[swap]{g}
& 
M \vee N \arrow{l}[swap]{d}
\end{tikzcd}
\]
By definition, $\phi_*([M\#N])=[N]$ and $q_*([M\#N]) = [M]\oplus [N]$. Thus, $q$ induces an epimorphism of the fundamental groups. So, in view of the homotopy equivalence $\psi$, the induced map 
\[
    (\psi\circ q)_*:H_0(M\# N;(\psi\circ q)^*(\A)) \to H_0(M\vee N;\A)
    \]
is an isomorphism for each coefficient system $\A$. Note that $(\psi\circ q)^*(\O)=\O_{\#}$. Let $\kappa_n\in H^n(B\pi_1(N);\F)$ be the primary obstruction to a section of the fibration  
$\B_n(p):P(B\pi_1(N))\to B\pi_1(N)$, see Paragraph 1 of Section~\ref{torsionfree} for details. Then 
\[
(g\circ\phi)^*(\kappa_n) \in H^n(M\#N;(g\circ\phi)^*(\F))
\]
is the primary obstruction to the lift of $g\circ\phi$ with respect to the fibration $\B_n(p)$. Since $\dim(M\# N)=n$, it is the only obstruction to such a lift of $g\circ \phi$. Clearly, we have $d^*(g^*(\kappa_n)) = \alpha \oplus g^*(\kappa_n)$ for some cohomology class $\alpha$. Since $\phi = d \circ \psi \circ q$ and $(\psi \circ q)_*([M\#N]) = [M]\oplus [N]$, we get the following set of equalities in $\O\otimes \hspace{0.5mm}(g\circ d)^*(\F)$ coefficients:
    \begin{multline*}
(\psi \circ q)_*\left([M\#N]\frown \phi^*(g^*(\kappa_n))\right)
= (\psi \circ q)_*\left([M\#N]\frown (\psi \circ q)^* \left(d^*(g^*(\kappa_n))\right)\right)
\\
= (\psi \circ q)_*([M\# N])\frown d^*(g^*(\kappa_n))
= ([M]\oplus [N])\frown(\alpha \oplus g^*(\kappa_n)).
\end{multline*}
Let $w:\pi_1(N)\to\Z_2=\text{Aut}(\Z)$ be the first Stiefel--Whitney class and $\Z_w$ be the twisted integers so that $[N]\in H_n(N;\O_N)= H_n(N;\Z_w)$, see Remark~\ref{formal}. Then, by the cap property of $N$, we have in $\Z_w\otimes\F$ coefficients that
\[
g_*([N]\frown g^*(\kappa_n)) = g_*([N]) \frown \kappa_n \ne 0.
\]
In particular, $[N]\frown g^*(\kappa_n) \ne 0$. Therefore, $([M]\oplus [N])\frown(\alpha \oplus g^*(\kappa_n)) \ne 0$ and thus, the first equality above implies that $\phi^*(g^*(\kappa_n)) = (g\circ\phi)^*(\kappa_n) \ne 0$. Hence, $g\circ \phi$ does not admit a lift with respect to $\B_n(p)$. By Propositions~\ref{obvious}~(\ref{two}) and~\ref{lift}, we get that 
\[
n \le \dcat(g\circ\phi)\le \dcat(M\#N) \le \cat(M\#N) = n.
\]
Thus, $\dcat(M\#N) = n$. 
\end{proof}

It follows from Theorem~\ref{2} and the proof of Corollary~\ref{main1} that if $N$ is a closed essential $n$-manifold such that $B\pi_1(N)$ is a closed $n$-manifold, then $\dcat(M\#N) = n$ for each closed $n$-manifold $M$. Therefore, we can directly use our symplectically aspherical manifolds from Examples~\ref{exist},~\ref{cover}, and~\ref{prod} to get more examples of manifolds whose $\dcat$ will be maximum.

\begin{ex}
    If $M^k$ is a closed $k$-manifold, $\P_n\in\mathscr A$, $\M^4\in\mathscr B$, and $X^n\in\mathscr{C}$, then
        \vspace{-6mm}
\begin{enumerate}
        \itemsep-0.35em 
        \item $\dcat(M^4 \# \M^4) = 4$.
        \item For each $n\ge 2$, $\dcat(M^{2n}\# X^n) = 2n$.
        \item For each $n\ge 1$, $\dcat(M^{2n+4}\# (\M^4\times\P_n)) = 2n+4$.
    \end{enumerate}
\end{ex}

\begin{cor}\label{realmain2}
    Let $M$ and $N$ be closed $n$-manifolds. If $N$ is aspherical, then we have $\dcat(M \# N) = \cat(M \# N) = n$.  
\end{cor}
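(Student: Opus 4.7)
The plan is to reduce Corollary~\ref{realmain2} directly to Theorem~\ref{2}, so the entire task is to verify that a closed aspherical $n$-manifold $N$ satisfies the three hypotheses placed on $N$ in Theorem~\ref{2}: that $N$ is essential, that $\pi_1(N)$ is torsion-free, and that $N$ satisfies the cap property of Definition~\ref{capp}. Once this is done, Theorem~\ref{2} immediately gives $\dcat(M \# N) = \cat(M\#N) = n$ for every closed $n$-manifold $M$. In fact, the remark the author makes just before the statement (``it follows from Theorem~\ref{2} and the proof of Corollary~\ref{main1} $\ldots$'') already indicates that this is the intended route, so my job is essentially bookkeeping.

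Next I would dispense with the first two hypotheses, which are standard. Since $N$ is closed and aspherical, $N$ is itself a model for $B\pi_1(N)$, so $B\pi := B\pi_1(N)$ is realized as a closed $n$-manifold. Torsion-freeness of $\pi := \pi_1(N)$ is immediate from this: any finite subgroup of $\pi$ would force $B\pi$ to have cohomology in arbitrarily high degree, contradicting $\cd(\pi) \le n$. That $N$ is essential follows from Theorem~\ref{iff} together with the Eilenberg--Ganea theorem, which give $\cat(N) = \cd(\pi) = n = \dim(N)$.

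For the cap property, I would mimic the short argument in the proof of Corollary~\ref{main1}. Let $f:N \to B\pi$ be a classifying map that cannot be deformed into the $(n-1)$-skeleton of $B\pi$; since $N \simeq B\pi$, such an $f$ can be chosen to be a homotopy equivalence, and in particular $f$ pulls back the orientation sheaf $\O_\pi$ to $\O_N$ and has degree $\pm 1$, so $f_*([N]) = \pm[B\pi] \in H_n(B\pi;\Z_w)$. The primary obstruction $\kappa_n \in H^n(B\pi;\F)$ to a section of $\B_n(p_\pi):P(B\pi)_n \to B\pi$ is nonzero, because by Theorem~\ref{eg} we have $\dcat(B\pi) = \cd(\pi) = n$, and Proposition~\ref{characterization} then rules out a global section of this fibration (whose only obstruction above the $(n-1)$-skeleton is $\kappa_n$, since the fiber is $(n-2)$-connected). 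Poincaré duality in $B\pi$ with $\O_\pi \otimes \F$ coefficients then yields
\[
f_*([N]) \frown \kappa_n \;=\; \pm\bigl([B\pi] \frown \kappa_n\bigr) \;\ne\; 0,
\]
confirming the cap property. Applying Theorem~\ref{2} finishes the proof. There is no real obstacle here beyond verifying definitions; the ``hard part'' is wholly absorbed by Theorem~\ref{2}, and the case of aspherical $N$ is the most transparent instance of its hypotheses because $N$ and $B\pi_1(N)$ coincide.
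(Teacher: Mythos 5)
Your proposal is correct and takes essentially the same approach as the paper: the paper's own proof of Corollary~\ref{realmain2} simply sets $g = \mathrm{id}$ in the proof of Theorem~\ref{2} and invokes Poincar\'e duality in $N$, which is precisely the verification of the cap property you spell out via Corollary~\ref{main1}'s argument. The paper even flags this route in the sentence preceding the corollary, so your bookkeeping matches the intended reduction exactly.
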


\begin{proof}
    Since $N$ is aspherical, we can take $g$ as the identity map in the proof of Theorem~\ref{2}. Using Poincaré duality in $N$ in $\O_N \otimes \F$ coefficients, we then get $n \le \dcat(\phi) \le \dcat(M\# N) \le n$ by proceeding in the same way as above.
\end{proof}

We note that just like Corollary~\ref{main1}, Corollary~\ref{realmain2} also recovers Corollary~\ref{kwobv}: if $N$ is a closed aspherical $n$-manifold, then $\dcat(N) = \dcat(S^n\#N) = n$. In fact, Theorem~\ref{1} is obtained as a special case of Theorem~\ref{2}.

\begin{cor}\label{result2}
If $M$ is a closed rationally essential $3$-manifold, then $\dcat(M)=3$.
\end{cor}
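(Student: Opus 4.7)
The plan is to reduce Corollary~\ref{result2} to Corollary~\ref{realmain2} by exhibiting $M$ as a connected sum with an aspherical summand. A preliminary observation is that the rationally essential hypothesis --- i.e., that $f^*:H^3(B\pi_1(M);\Q)\to H^3(M;\Q)$ is non-trivial for some classifying map $f$, in the sense of Theorem~\ref{iff}(\ref{333}) with $\A=\Q$ --- forces $M$ to be orientable, since $H^3(M;\Q)=0$ otherwise. So we may assume $M$ is closed orientable.

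First, I would apply the Kneser--Milnor prime decomposition to write $M=M_1\#\cdots\# M_k$ with each $M_i$ a prime closed orientable $3$-manifold. By the sphere theorem together with Perelman's elliptization theorem, every prime summand falls into one of three classes: (a) the $S^2$-bundle $S^2\times S^1$, with $\pi_1=\Z$; (b) a closed orientable $3$-manifold with finite fundamental group (a spherical space form); or (c) a closed orientable aspherical $3$-manifold. Since $\pi_1(M)\cong\pi_1(M_1)\ast\cdots\ast\pi_1(M_k)$, we have the wedge decomposition $B\pi_1(M)\simeq B\pi_1(M_1)\vee\cdots\vee B\pi_1(M_k)$, and the classifying map $f:M\to B\pi_1(M)$ factors through the pinch map $M\to M_1\vee\cdots\vee M_k$ that collapses the connecting $2$-spheres.

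Next, the splitting $H^3(B\pi_1(M);\Q)=\bigoplus_{i=1}^k H^3(B\pi_1(M_i);\Q)$, combined with the facts that summands of type (a) contribute $H^3(S^1;\Q)=0$ and those of type (b) contribute $0$ because their fundamental groups are finite, shows that only summands of type (c) can contribute non-trivially in rational degree $3$. Hence the rationally essential hypothesis forces at least one prime summand, say $N$, to be a closed orientable aspherical $3$-manifold. Finally, writing $M=M'\# N$ with $M'$ the connected sum of the remaining prime summands (or $M'=S^3$ if $k=1$), Corollary~\ref{realmain2} directly yields $\dcat(M)=\cat(M)=3$. The main obstacle is locating the aspherical summand --- i.e., tracing how rational essentialness propagates through the wedge decomposition of the classifying space of a free product --- after which the application of Corollary~\ref{realmain2} is immediate.
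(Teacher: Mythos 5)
Your proof is correct, and the overall skeleton is the same as the paper's: exhibit $M$ as a connected sum with a closed orientable aspherical $3$-manifold summand, then invoke Corollary~\ref{realmain2}. The difference is the provenance of the aspherical summand. The paper simply cites Kotschick--Neofytidis~\cite[Theorem~3]{KN} for the fact that a closed rationally essential $3$-manifold has an aspherical prime summand; you instead re-derive this from scratch. Your derivation is sound: rational essentiality forces orientability (since $H^3(M;\Q)=0$ for a closed non-orientable $3$-manifold), the Kneser--Milnor prime decomposition applies, and a prime orientable piece is either $S^2\times S^1$, irreducible with finite $\pi_1$, or irreducible with infinite $\pi_1$ (hence aspherical by the sphere theorem); only the third type has $H^3(B\pi_1;\Q)\ne 0$, so one such summand must occur. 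What your route buys is self-containedness --- the key structural fact is proved rather than imported --- at the cost of invoking standard $3$-manifold machinery. One small simplification: you don't actually need Perelman's elliptization theorem. You only need that type~(b) summands have \emph{finite} fundamental group (which is the defining trichotomy, not a consequence of geometrization) so that $H^3(B\pi_1(M_i);\Q)=0$ on the nose; whether they are spherical space forms is irrelevant to the cohomological computation.
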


\begin{proof}
If $M$ is a closed rationally essential $3$-manifold, then we can write $M=P\# N$ for a closed $3$-manifold $P$ and a closed aspherical $3$-manifold $N$, see~\cite[Theorem 3]{KN}. Then, Corollary~\ref{realmain2} implies $\dcat(M)=3$.
\end{proof}

We note that Corollary~\ref{result2} is not true for general closed essential $3$-manifolds in view of the fact that $\dcat(\R P^3)=1$.

\begin{remark}
    For each $n\ge 1$, let $T^n$ denote the $n$-torus. Due to Corollary~\ref{realmain2}, we have $\dcat(T^n\# T^n) = n$.
    However, $B\pi_1(T^n\# T^n)=T^n\vee T^n$ is not a closed $n$-manifold for any $n$. Thus, the converse of Corollary~\ref{main1} is not true.
\end{remark}

For non-aspherical spaces, their rational (or real) cup-lengths were typically used to determine their $\dcat$ values in~\cite[Section 6.1]{DJ}. Given a connected sum, the cup-lengths of its summands can be used to estimate its $\dcat$. Indeed, for $\mathbb{F}\in\{\Q,\R\}$, we have that
\[
\max\{c\ell_{\mathbb{F}}(M),c\ell_{\mathbb{F}}(N)\} \le \dcat(M\#N) \le \cat(M\# N),
\]
see~\cite[Remark 8.2]{J2}. 
For example, $n\le \dcat(T^n\#T^n) \le \cat(T^n\#T^n)=n$ implies $\dcat(T^n\#T^n)=n$.
But from this technique alone, we cannot prove that $\dcat(M\# N)$ is maximum unless
$c\ell_{\mathbb{F}}(M)=\cat(M\# N)$ or $c\ell_{\mathbb{F}}(N)=\cat(M\# N)$. So, for an aspherical $n$-manifold $N$, Corollary~\ref{realmain2} gives new computations $\dcat(N\# N)$ in the case when $c\ell_{\mathbb{F}}(N) < \cat(N)$ for $\mathbb{F}\in\{\Q,\R\}$. We now discuss several such examples.

\begin{ex}[\protect{From non-orientable surfaces}]
    Let $N_{g_i}$ be a closed non-orientable surface of genus $g_i \ge 2$. Then for any $m \ge 1$, the product $Q_m=\prod_{i=1}^mN_{g_i}$ is a closed apsherical $2m$-manifold such that $c\ell_{\mathbb{F}}(Q_m) < 2m$. By Corollary~\ref{realmain2}, $\dcat(M^{2m}\#Q_m) = 2m$ for any closed $2m$-manifold $M^{2m}$. 
    More generally, for any $n,k \ge 1$, Corollary~\ref{realmain2} gives $\dcat(M^{2m+2n}\#(Q_m\times\P_n)) = 2m+2n$ and $\dcat(M^{2m+k}\#(Q_m\times T^k)) = 2m+k$, where $T^k$ is the $k$-torus and $\P_n$ is as in Example~\ref{asph}, even though $c\ell_{\mathbb{F}}(Q_m \times \P_n) < 2m+2n$ and $c\ell_{\mathbb{F}}(Q_m \times T^k) < 2m+k$.
\end{ex}

\begin{ex}[\protect{Subgroups of Lie groups}]
    Let $G$ be a Lie group and $H$ be its maximal compact subgroup. If $\Gamma$ is a torsion-free lattice in $H$, then $N = \Gamma\backslash G/H$ is a closed aspherical manifold of dimension $n$ for some $n \ge 1$ depending on the choice of $H$ and $\Gamma$. So, $\dcat(M^n \# N) = n$ by Corollary~\ref{realmain2}.
\end{ex}

\begin{ex}[\protect{Nilmanifolds}]
Let $G$ be a simply connected nilpotent Lie group of dimension $n$ and $\Gamma$ be a non-abelian co-compact lattice in $G$. Then $N=G/\Gamma$ is a closed nilmanifold and $c\ell_{\mathbb{F}}(N) <  n = \dim(N)$, see~\cite[Proposition 4.3]{RT}.
Hence, $\dcat(M^n\# N) = n$ by Corollary~\ref{realmain2}. In particular, we get $\dcat(N\# N) = n$ as new computations. An example of this kind is the Heisenberg manifold $\H_3$ described in Example~\ref{asph}. We note that for the Kodaira--Thurston manifold $\mathcal{KT} = \H_3 \times S^1$, we have $c\ell_{\mathbb{F}}(\mathcal{KT}) = 3 < 4$~\cite{RT}. Thus, $\dcat(M^4\#\mathcal{KT})=4$, and $\dcat(\mathcal{KT}\# \mathcal{KT}) = 4$ is another new computation due to Corollary~\ref{realmain2}.
\end{ex}

\begin{ex}[\protect{Hantzsche--Wendt manifolds}]\label{hwmani}
For odd $n \ge 3$, a flat orientable $n$-manifold $\mathcal{N}$ is called a \emph{generalized Hantzsche--Wendt $n$-manifold} if its holonomy group is $\Z_2^{n-1}$. When $n=3$, this is the well-known Hantzsche--Wendt manifold (see, for example,~\cite{hw}), which is the union along the boundaries of twisted orientable interval-bundles over a Klein bottle (described explicitly in Section~\ref{vsimple} as $\mathcal{M}_6$). In each odd dimension $\ge 3$, $\mathcal{N}$ is a rational homology sphere due to~\cite{hw}. Therefore, $c\ell_{\Q}(\mathcal{N})=1 < n = \cat(\mathcal{N})$. By Corollary~\ref{realmain2}, $\dcat(M^n\#\mathcal{N}) = n$, and so, $\dcat(\mathcal{N}\#\mathcal{N}) = n$ are new computations. In fact, for any $m \ge 1$, if $T^m$ is the $m$-torus, then $c\ell_{\Q}(\mathcal{N} \times T^m) < n+m$. Hence, we get
\[
\dcat\left(\left(\mathcal{N} \times T^m\right)\#\left(\mathcal{N} \times T^m\right)\right) = n+m
\]
as a new computation in each dimension $\ge 4$ by Corollary~\ref{realmain2}.
\end{ex}

\section{For low-dimensional manifolds}\label{lowdim}
In this section, we focus on closed manifolds of dimensions $n\le 4$. In dimensions $n \le 2$, the $\dcat$ values are completely known,~\cite[Section 6.1]{DJ} (see also Section~\ref{Introduction}). So, the first interesting problem in determining $\dcat$ of low-dimensional manifolds comes in dimension $3$. 

\subsection{$3$-manifolds}\label{threemani}
The classical LS-category of closed $3$-manifolds was computed first in~\cite{GLGA} and then in~\cite{OR} using category weight. The complete description based on the fundamental groups is as follows. Let $M$ be a closed $3$-manifold. Then,
\[
\cat(M) =\begin{cases}
    1 & \textup{if } \pi_1(M) = 0 \\
    2 & \textup{if } \pi_1(M)\ne 0 \text{ is free} \\
    3 & \textup{if } \pi_1(M) \text{ is not free}
\end{cases}
\]

Using some ideas from~\cite{OR} and Theorem~\ref{1}, we can compute the distributional category of any closed $3$-manifold $M$ such that $\pi_1(M)$ is torsion-free. 

\begin{proof}[Proof of Theorem~\ref{bb}]
    First, let $M$ be simply connected. In this case, we have that $\dcat(M) \le \cat(M) = 1 \implies \dcat(M) = 1$.\\

    Now, let us assume that $\pi_1(M) \ne 0$ is free. Let $M = M_1\#\cdots\#M_k$ be the unique (up to permutation) decomposition of $M$ into prime $3$-manifolds $M_i$, see~\cite[Theorems 3.15 and 3.21]{He}. Since $\pi_1(M) \ne 0$ is free, $\pi_1(M_i)$ is free for each $1 \le i \le k$ and there exists $j$ such that $\pi_1(M_j) \ne 0$. Due to~\cite[Corollary 4.6]{OR}, $M_j$ cannot be irreducible. Therefore, $M_j$ must be an $S^2$-bundle over $S^1$ by~\cite[Lemma 3.13]{He}. Let us write $M = N \# M_j$, where $N$ is the connected sum of the other prime components (if $k=1$, we take $N = S^3$). If $M_j$ is orientable, then $M_j = S^2 \times S^1$. Therefore, $2 = c\ell_{\Q}(S^2\times S^1) \le \dcat(N\# M_j)$. If $M_j$ is non-orientable, then $S^2 \times S^1$ is a double cover of $M_j$. Hence, $N\#N\#(S^2\times S^1)$ is a cover of $N \# M_j$. Using the rational cup-length of $S^2 \times S^1$ in the same way as before and Theorem~\ref{covv}, we get
    \[
    2 = c\ell_{\Q}(S^2\times S^1) \le \dcat(N\# N\#(S^2\times S^1)) \le \dcat(N\# M_j).
    \]
    In any case, we have $2 \le \dcat(M) \le \cat(M) = 2$. Thus, $\dcat(M) = 2$. \\

    Finally, let $\pi_1(M)$ be not free but torsion-free. Let $M = M_1\#\cdots\#M_k$ be the (essentially unique) decomposition of $M$ into prime $3$-manifolds $M_i$. Since $\cat(M) = 3$, due to the connected sum formula~\cite{DS}, there exists $j$ such that $\cat(M_j) = 3$. Since the LS-category of $S^2$-bundles over $S^1$ is $2$, $M_j$ must be irreducible by~\cite[Lemma 3.13]{He}. Since $\pi_1(M)$ is torsion-free, so is $\pi_1(M_j)$. In particular, $\pi_1(M_j)$ is infinite. Thus, $M_j$ is not a finite quotient of $S^3$. This and the irreducibility of $M_j$ implies that $M_j$ is aspherical. 
    Hence, we have $M = N\#M_j$, where $N$ is the sum of the other summands and $M_j$ is aspherical. So, we apply Corollary~\ref{realmain2} to get $\dcat(M) =3$.
\end{proof}

\begin{remark}\label{rp3}
In Theorem~\ref{bb}, we cannot drop the torsion-free hypothesis on $\pi_1(M)$ when $\pi_1(M)$ is not free: $\pi_1(\R P^3) = \Z_2$ is not free but $\dcat(\R P^3) = 1$.
\end{remark}

The only remaining case is when $\pi_1(M)$ has torsion (this implies that $\pi_1(M)$ is not free). We say that $M \in \mathscr{D}$ if $\pi_1(M)$ has torsion. To explore the possibility of determining $\dcat$ of $3$-manifolds in the class $\mathscr{D}$, we consider the following examples.

\begin{ex}\label{rp3sum}
    Clearly, $\R P^3 \#\R P^3\in\mathscr{D}$. We can view $\R P^3 \#\R P^3$ as the quotient space $(S^2 \times [0,1])/\sim$, where $(x,0)\sim (-x,0)$ and $(x,1)\sim (-x,1)$ for all $x\in S^2$. Then, $S^2 \times S^1$ is a double cover of $\R P^3 \#\R P^3$. By Theorem~\ref{covv}, we get that $2 = \dcat(S^2\times S^1) \le \dcat(\R P^3 \#\R P^3)$. 
\end{ex}

\begin{remark}\label{notinorient}
    Due to Example~\ref{rp3sum}, $\dcat(\R P^3) = 1 < 2 \le \dcat(\R P^3 \#\R P^3)$. Hence, the connected sum formula from~\cite[Theorem 1]{DS} does not hold for $\dcat$ even when all the summands are orientable manifolds.
\end{remark}

\begin{ex}\label{lens}
If $M$ and $N$ are two non-trivial $3$-dimensional lens spaces, then $M\#N\in\mathscr{D}$. Depending on the lens spaces chosen, there exists some $n\ge 2$ such that $\#_{i=1}^n(S^2\times S^1)$ is a cover of $M\# N$. Therefore, $\dcat(M\#N)\ge 2$. 
\end{ex}

\begin{ex}\label{sumwithrp3}
    Let $M$ be a $3$-manifold admitting the spherical geometry and let $N$ be aspherical. Since $\pi_1(M)$ is finite, $M\#N\in \mathscr{D}$. However, Corollary~\ref{realmain2} gives $\dcat(M\# N) = 3$. So, the converse of Theorem~\ref{bb} is not true.
\end{ex}

Because of $\R P^3$ and the above examples, we see that manifolds in class $\mathscr{D}$ cannot have a constant $\dcat$ value. If $M \in \mathscr{D}$, then while $\cat(M) = 3$, one cannot estimate $\dcat(M)$ in general. So, Theorem~\ref{bb} is optimal in some sense.

Due to Thurston, $\mathbb{E}^3$, $\N il^3$, $\mathbb{H}^2\times\mathbb{E}$, $\wt{\mathbb{SL}}$, $\mathbb{H}^3$, $\mathbb{S}ol^3$, $\mathbb{S}^3$, and $\mathbb{S}^2\times\mathbb{E}^1$ are the eight maximal $3$-dimensional geometries. If $M$ has one of the first six geometries, then $M$ is aspherical, and thus, $\dcat(M) = 3$. By Corollary~\ref{realmain2}, if $M$ appears in the prime decomposition of a closed $3$-manifold $K$, then $\dcat(K)=3$. If $M$ has the spherical geometry $\mathbb{S}^3$, then $M\in\mathscr D$ and we do not know $\dcat(M)$ in general. However, if $M$ has $\mathbb{S}^2\times\mathbb{E}^1$ geometry, then $\dcat(M) \ge 2$ because there are only the following four possibilities for $M$.
\begin{itemize}
            \itemsep-0.3em 
\item $M = S^2 \times S^1$. Then $\dcat(M) = 2$ by~\cite[Proposition 6.7]{DJ}.
    \item $M$ is the non-orientable $S^2$-bundle over $S^1$. Then $\dcat(M)=2$ by the proof of Theorem~\ref{bb}.
    \item $M = \R P^3\#\R P^3$. Then $\dcat(M) \ge 2$ by Example~\ref{rp3sum}.
    \item $M = \R P^2\times S^1$. Then since $S^2\times S^1$ is a cover of $M$, Theorem~\ref{covv} directly gives $2=c\ell_{\Q}(S^2\times S^1)\le \dcat(M)$.
\end{itemize}
 So, if such a manifold $M$ appears in the prime decomposition of a $3$-manifold $K$, then $\dcat(K)\ge 2$ by the cup-length technique used in the proof of Theorem~\ref{bb}. 

\begin{remark}\label{justref}
    Determining the exact value $\dcat(\R P^2\times S^1)$ will be very interesting. If $\dcat(\R P^2\times S^1) = 2$, then since $\cat(\R P^2\times S^1) = 3$, we will have another example of a closed manifold besides the real projective spaces whose $\dcat$ value will be less than the $\cat$ value. However, if $\dcat(\R P^2\times S^1) = 3$, then this will eliminate the possibility of a product inequality for $\dcat$ (see~\cite[Question 3.4]{DJ}) and disprove an analog of the famous Ganea conjecture (see~\cite[Conjecture 1.40]{CLOT}) for $\dcat$. 
\end{remark}

\subsection{$4$-manifolds with free fundamental group}\label{fourmani}
Let $M$ be a closed $4$-manifold such that $\pi_1(M)$ is free. First, let $\pi_1(M) = 0$. Then $\cat(M)\le 2$ by~\cite[Theorem 1.50]{CLOT}. If $M\simeq S^4$, then $\dcat(M) = \cat(M)=1$. If $M \not\simeq S^4$ is smooth, then $\dcat(M) = \cat(M)=2$ by arguments similar to that of~\cite[Example 8.5]{J2}. 

\begin{prop}
    If $M$ is a closed orientable $4$-manifold such that $\pi_1(M) \ne 0$ is free, then $\dcat(M) = \cat(M)=2$.
\end{prop}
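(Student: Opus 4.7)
The plan is to establish the sandwich $2 \le \dcat(M) \le \cat(M) = 2$. For the claim $\cat(M) = 2$, the lower bound is immediate: since $\pi_1(M) \ne 0$, the manifold $M$ is not homotopy equivalent to $S^4$, so $\cat(M) \ge 2$. The upper bound $\cat(M) \le 2$ is a known fact for closed $4$-manifolds with free fundamental group (it can be cited from the literature on LS-category of low-dimensional manifolds, or derived directly from a handle decomposition in which one may arrange the $0$- and $1$-handles, the $2$-handles, and the $3$- and $4$-handles into three categorical open sets by standard expansions).

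For the remaining content of the proposition, I would invoke the fact recorded in Section~\ref{pre1} that the rational cup-length $c\ell_{\Q}(M)$ is a lower bound for $\dcat(M)$. It therefore suffices to show $c\ell_{\Q}(M) \ge 2$. Since $\pi_1(M)$ is a non-trivial free group, its abelianization $H_1(M;\Z)$ is a non-zero free abelian group, so $H_1(M;\Q) \ne 0$ and hence $H^1(M;\Q) \ne 0$ by the universal coefficient theorem. Because $M$ is closed, orientable, and of dimension $4$, Poincaré duality gives a non-degenerate cup product pairing
\[
H^1(M;\Q) \otimes H^3(M;\Q) \longrightarrow H^4(M;\Q) \cong \Q.
\]
Choosing a non-zero $\alpha \in H^1(M;\Q)$, the non-degeneracy produces $\beta \in H^3(M;\Q)$ with $\alpha \smile \beta \ne 0$, whence $c\ell_{\Q}(M) \ge 2$ and therefore $\dcat(M) \ge 2$.

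Combining the two bounds, we conclude $\dcat(M) = \cat(M) = 2$. The arithmetic of the proof is essentially trivial; the main potential obstacle is the upper bound $\cat(M) \le 2$, which genuinely uses the structure of $4$-manifolds with free fundamental group and is not formal. The lower bound, by contrast, reduces cleanly to Poincaré duality via the cup-length estimate, and no use of the obstruction-theoretic machinery developed in Section~\ref{essential} is required since dimensional/cohomological information alone is sharp in this situation.
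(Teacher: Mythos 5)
Your proposal is correct and follows essentially the same route as the paper: cite the known bound $\cat(M)\le 2$ for closed $4$-manifolds with free fundamental group, then show $c\ell_{\Q}(M)\ge 2$ using Poincaré duality and the non-degenerate cup product pairing $H^1(M;\Q)\otimes H^3(M;\Q)\to H^4(M;\Q)$ to squeeze $\dcat(M)$ between $2$ and $\cat(M)$. The explicit remark that $\cat(M)\ge 2$ is redundant once the cup-length bound is in place, but harmless.
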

\begin{proof}
As observed in~\cite[Page 2]{DKR} and~\cite[Page 5]{OS}, if $\pi_1(M) \ne 0$ is free, then $\cat(M) \le 2$. Since $\pi_1(M)$ is free of rank $k \ne 0$, $H_1(M;\Z)$ is a free abelian group of rank $k\ge 1$. Because $M$ is orientable, $H^1(M;\Q) \ne 0$. Due to Poincaré duality, $H^3(M;\Q) \ne 0$ and the cup product pairing
\[
H^1(M;\Q)\otimes H^3(M;\Q) \to H^4(M;\Q)
\]
is non-singular,~\cite[Proposition 3.38]{Ha}. So, there exists non-trivial  cohomology classes $u\in H^1(M;\Q)$ and $v\in H^3(M;\Q)$ such that $u\smile v \ne 0$. Therefore, $2\le c\ell_{\Q}(M) \le \dcat(M) \le \cat(M) \le 2$. Hence, we have $\dcat(M)=2$.
\end{proof}

We note that $\cat(M) = 2$ holds for any closed $4$-manifold $M \not\simeq S^4$ with $\pi_1(M)$ free, without any assumption on orientability. When $\pi_1(M)\ne 0$, the above proof works with $\Z_2$ coefficients since $c\ell_{\Z_2}(M) \le \cat(M)$. Otherwise, if $\pi_1(M) = 0$, then the conclusion follows from~\cite[Corollary 8.2]{KR}.

We will see more computations for 
$\dcat$ of closed $4$-manifolds in Section~\ref{newsubsect}.

\subsection{An analog of Rudyak's conjecture}
The following conjecture is well-known in the theory of the classical LS-category of manifolds,~\cite{R1},~\cite{CLOT},~\cite{R2}.
\begin{conjec}[\protect{Rudyak}]
    Let $M$ and $N$ be two closed orientable $n$-manifolds and let $f:M \to N$ be a map of degree $\pm 1$. Then $\cat(M) \ge \cat(N)$.
\end{conjec}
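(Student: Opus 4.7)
First, I would extract the two basic consequences of the degree hypothesis. The remark at the end of Section~\ref{basics} shows that $f_\bullet: \pi_1(M) \to \pi_1(N)$ is surjective, and Poincaré duality in $M$ and $N$ (taken with the orientation-twisted coefficients $\Z_w \otimes \A$ of Remark~\ref{formal}) identifies $f^*$ on top cohomology with $f_*$ on zeroth homology, which by the degree hypothesis is multiplication by $\pm 1$. Thus $f^*: H^n(N;\A) \to H^n(M; f^*\A)$ is an isomorphism for any coefficient system $\A$, and more generally $f^*$ is injective and cup-preserving in every degree, yielding $\cuplength_R(N) \le \cuplength_R(M)$ for every ring $R$.

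Second, I would dispatch the essential case directly. Suppose $\cat(N) = n$, so by Theorem~\ref{iff}(3) there is a classifying map $g: N \to B\pi_1(N)$ and a coefficient system $\A$ with $g^*: H^n(B\pi_1(N);\A) \to H^n(N; g^*\A)$ non-trivial. Pick a classifying map $h: M \to B\pi_1(M)$ for $M$; since $f_\bullet$ is surjective, one may arrange $Bf_\bullet \circ h \simeq g \circ f$. Using the first paragraph, $f^* \circ g^*$ is non-trivial on $H^n(B\pi_1(N);\A)$, hence $h^* \circ (Bf_\bullet)^*$ is, so $h^*$ itself is non-trivial on $H^n(B\pi_1(M);(Bf_\bullet)^*\A)$. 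Thus $M$ is essential and $\cat(M) = n = \cat(N)$. This covers, in particular, every aspherical $N$ and more generally every rationally essential $N$.

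For the general case $\cat(N) < n$, my plan is to invoke Strom category weight $\wgt$, for which $\cat(X) \ge \wgt(u)$ whenever $u$ is a non-zero cohomology class of $X$. Picking $u \in H^k(N;\A)$ with $\wgt(u) = \cat(N)$ (such $u$ exists by the work of Rudyak and Strom for closed manifolds), one has $\cat(M) \ge \wgt(f^*(u)) \ge \wgt(u) = \cat(N)$ whenever $f^*(u) \neq 0$, because weight is preserved by pullback along any map. So the entire conjecture would reduce to selecting a weight-realizing class whose pullback is non-zero.

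The main obstacle is precisely this non-vanishing. While $f^*$ is an isomorphism in top degree by degree $\pm 1$, a weight-realizing class of degree $k < n$ need not survive pullback, and low-dimensional cohomology of $M$ may carry additional classes that $f$ fails to see. Closing this gap would likely require either upgrading $f$ (using surjectivity of $f_\bullet$) to force every weight-relevant class of $N$ to pull back non-trivially, or producing weight-realizing classes on $M$ \emph{directly} from the primary obstructions $\kappa_n$ of Section~\ref{basics}, bypassing pullback altogether. This is exactly why the classical conjecture remains open; the cleanest handle one currently has is the essential case above together with its $\dcat$-analog in low dimensions, where Theorem~\ref{bb} together with the $\pi_1$-classification of $\cat$ for $3$-manifolds makes the statement accessible without confronting the weight-transfer problem.
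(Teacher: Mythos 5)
The statement you were asked to prove is Rudyak's \emph{conjecture}, which is open; the paper does not prove it, but only records it, cites the partial result of Rudyak (Theorem~\ref{cth}, valid for $n\le 4$ and proved in~\cite{R1},~\cite{R2} via category weight and detecting elements), and later combines that with $\dcat=\cat$ identifications to get a $\dcat$-analog in low dimensions. You correctly recognize this: your first two paragraphs constitute a genuine and correct partial proof of the conjecture (the case $\cat(N)=n$), and your last two paragraphs accurately explain both the category-weight strategy that underlies the known cases and the precise gap --- that weight-realizing classes of $N$ in degree $k<n$ need not pull back non-trivially under $f$ --- that keeps the general conjecture open.

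Two small remarks on your partial argument. In the first paragraph, the injectivity of $f^*$ in all degrees (not just the top) for a degree-$\pm1$ map is standard, but it is worth spelling out the identity $f_*\left(f^*(\alpha)\frown[M]\right)=\alpha\frown f_*([M])=\pm\,\alpha\frown[N]$, which exhibits a left inverse for $f^*$ via Poincar\'e duality; as stated, ``identifies $f^*$ on top cohomology with $f_*$ on zeroth homology'' only directly gives the top-degree statement. In the second paragraph, the homotopy $Bf_\bullet\circ h\simeq g\circ f$ is justified because both maps into the aspherical space $B\pi_1(N)$ induce $f_\bullet$ on $\pi_1$; you invoke this implicitly and it would be worth stating. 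With these fleshed out, your essential-case argument is complete and matches what is implicit in the paper's framework (Theorem~\ref{iff}); since the paper offers no proof of the conjecture itself, there is nothing further to compare against, and your honest assessment of the remaining difficulty is the appropriate conclusion.
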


This conjecture is quite natural and is known to be true in several cases. In particular, it holds for low-dimensional manifolds, see~\cite{R1} and~\cite[Section 6]{OR}.

\begin{theorem}[\protect{\cite{R2}}]\label{cth}
     Let $M$ and $N$ be closed orientable $n$-manifolds for $n \le 4$ and let $f:M \to N$ be a map of degree $\pm 1$. Then $\cat(M) \ge \cat(N)$.
\end{theorem}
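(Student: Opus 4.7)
The plan is to do case analysis on $\cat(N)$ in each dimension $n\in\{1,2,3,4\}$, leveraging the two consequences of $\deg(f)=\pm 1$ already emphasized in the excerpt: the fundamental class equality $f_*([M])=\pm[N]$, and the epimorphism $f_\bullet:\pi_1(M)\to\pi_1(N)$. The common thread in the nontrivial cases is to identify a top-dimensional cohomology class $\alpha$ on $N$ whose nonvanishing detects $\cat(N)$, and then to pull it back along $f$, using Poincar\'e duality in both manifolds (which are orientable) together with the projection formula
\[
f_*([M]\frown f^*\alpha)=f_*([M])\frown\alpha=\pm[N]\frown\alpha
\]
to conclude that $f^*\alpha$ remains nonzero in $H^*(M;f^*\A)$. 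Because the category-weight of such an $\alpha$ is preserved under pullback, this will give the desired bound $\cat(M)\ge\cat(N)$.

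In dimensions $n\le 2$ the statement follows from the classification of orientable surfaces combined with surjectivity of $f_\bullet$: when $\cat(N)=2$, $\pi_1(N)\ne 0$ forces $\pi_1(M)\ne 0$, hence $M\not\simeq S^2$ and $\cat(M)=2$. In dimension $n=3$ I would use the complete classification recalled in Section~\ref{threemani}. The cases $\cat(N)\le 2$ reduce to surjectivity of $f_\bullet$ together with that classification applied to $M$. The essential case $\cat(N)=3$ (equivalently $\pi_1(N)$ not free) is handled by applying the category-weight/projection-formula mechanism above to the third power of the Berstein--Schwarz class $\beta_{\pi_1(N)}$ pulled back along a classifying map $u:N\to B\pi_1(N)$; by Theorem~\ref{bs} and essentiality of $N$ we have $u^*(\beta_{\pi_1(N)}^3)\ne 0$, and the formula above then shows $(u\circ f)^*(\beta_{\pi_1(N)}^3)\ne 0$, whence $\cat(M)\ge 3$.

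Dimension $n=4$ is where the main difficulty sits, because the $\cat$-classification of closed orientable $4$-manifolds is not as clean as in dimension $3$. When $\cat(N)\le 2$, a rational cup-length argument in the spirit of Section~\ref{fourmani} combined with surjectivity of $f_\bullet$ suffices (the pulled-back nontrivial $1$-cocycle survives and pairs against a Poincar\'e dual $3$-class on $M$). When $\cat(N)\in\{3,4\}$, $N$ is essential, and one again runs the projection-formula argument on an appropriate power of $\beta_{\pi_1(N)}$ to force $M$ to be essential (when $\cat(N)=4$) or at least to support a nonzero cohomology class of category weight $3$ (when $\cat(N)=3$).

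The most delicate point I expect is the $\cat(N)=3$ subcase in dimension $4$: one must rule out $\cat(M)=2$ despite $M$ carrying a nonzero pulled-back essentiality class of degree $3$. This is exactly the situation for which Rudyak's notion of category weight was invented, since category weight behaves well under arbitrary pullbacks and directly bounds $\cat$ from below; the hard part is verifying that the essentiality class on $N$ witnessing $\cat(N)=3$ can be chosen of category weight $3$ (rather than merely nonzero), so that its nonvanishing pullback $f^*\alpha$ automatically forces $\cat(M)\ge 3$.
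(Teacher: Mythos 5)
This theorem is cited from Rudyak's paper \cite{R2}; the present paper does not give its own proof, only noting afterward that the proof for $n\in\{3,4\}$ relies on category weight and detecting elements. Your proposal correctly identifies that machinery as the engine of the argument, and the easy cases ($n\le 2$; $\cat(N)\le 2$ in dimension $3$; $\cat(N)=\dim N$ via essentiality and the Berstein--Schwarz class) are handled along the lines Rudyak actually uses. However, the proposal does not constitute a proof, and it contains a genuine error at exactly the point you flag as delicate.

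First, the claim that ``When $\cat(N)\in\{3,4\}$, $N$ is essential'' is false for $\cat(N)=3$ in dimension $4$: by Theorem~\ref{iff}, $N$ is essential if and only if $\cat(N)=\dim(N)=4$. Consequently, there is no reason that $u^*(\beta_{\pi_1(N)}^3)\ne 0$ for a classifying map $u:N\to B\pi_1(N)$, and the projection-formula argument you outline has no class to feed on. What you would need is a \emph{detecting element}: a nonzero class on $N$ of category weight exactly $3$. Establishing that closed orientable $4$-manifolds with $\cat=3$ always admit such a detecting element is the actual content of \cite{R2} (this is where it genuinely differs from the $n=3$ case of \cite{OR}), and your proposal leaves it entirely open. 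Note also that surjectivity of $f_\bullet$ cannot rescue this case: a non-free $\pi_1(N)$ can be the image of a free $\pi_1(M)$, so one cannot even rule out $\cat(M)=2$ by fundamental-group considerations alone.

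A secondary gap: in the $n=4$, $\cat(N)=2$ subcase your argument that ``the pulled-back nontrivial $1$-cocycle survives'' presumes $H^1(N;\Q)\ne 0$, which fails when $\pi_1(N)$ is nontrivial but finite. That subcase can be patched (a closed manifold with $\cat=1$ is a homotopy sphere, hence simply connected, so $\pi_1(M)\ne 0$ forces $\cat(M)\ge 2$), but as written the cup-length step does not cover it.
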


We aim to present an analog of this theorem for $\dcat$. First, we ask the following.

\begin{question}\label{conj}
    Let $M$ and $N$ be closed orientable $n$-manifolds and let $f:M \to N$ be a map of degree $\pm 1$. Is it true that $\dcat(M) \ge \dcat(N)$?
\end{question}

\begin{ex}
In the above setting, if $N = \R P^{n}$ for odd $n$, then Question~\ref{conj} is answered since $M$ is not contractible and thus, $\dcat(M) \ge 1 = \dcat(\R P^{n})$.    
\end{ex}

For $n\le 2$, $\dcat$ and $\cat$ of closed orientable $n$-manifolds coincide,~\cite[Section 6.1]{DJ}. So, Question~\ref{conj} is answered affirmatively for $n\le 2$ in view of Theorem~\ref{cth}.

\begin{remark}
The proof of Theorem~\ref{cth} for $n \in\{3,4\}$ uses the notions of category weight and detecting elements (see~\cite[Section 2]{R1}). Unfortunately, we do not have useful analogous notions in the distributional theory yet, so we attempt to answer Question~\ref{conj} affirmatively in some cases using our results from this paper.
\end{remark}

\begin{prop}
    Let $M$ and $N$ be two closed orientable $n$-manifolds and let $f:M\to N$ be a map of degree $\pm 1$. Then $\dcat(M)\ge \dcat(N)$ holds in each of the following four cases:
 \vspace{-2.5mm}
    \begin{enumerate}
            \itemsep-0.3em        
        \item $n = 3$ and $\pi_1(M)$ is torsion-free.\label{one1}
        \item $n=4$ and $M$ is smooth and simply connected.\label{two22}
        \item $n=4$ and $\pi_1(M)\ne 0$ is free.\label{two2}
        \item $n\in\{3,4\}$ and $M=K_1\#K_2$, where $K_2$ is essential, $\pi_1(K_2)$ is torsion-free, and $K_2$ satisfies the cap property (from Definition~\ref{capp}).\label{three3}
    \end{enumerate} 
\end{prop}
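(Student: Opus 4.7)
My plan is to reduce each of the four cases to Theorem~\ref{cth} --- the classical Rudyak inequality in dimensions $\le 4$ --- by first establishing the equality $\dcat(M) = \cat(M)$ using results already developed in this paper. Once this equality is known, the chain
\[
\dcat(M) \;=\; \cat(M) \;\ge\; \cat(N) \;\ge\; \dcat(N)
\]
finishes the argument, with the middle inequality supplied by Theorem~\ref{cth} (which is why orientability of both manifolds is built into the hypotheses) and the last by Proposition~\ref{obvious}(\ref{t2}). The degree $\pm 1$ assumption on $f$ enters only through Theorem~\ref{cth}.

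For case (\ref{one1}), the identity $\dcat(M) = \cat(M)$ is precisely the content of Theorem~\ref{bb}. For case (\ref{two22}), the opening paragraph of Section~\ref{fourmani} already records that a smooth closed simply connected $4$-manifold satisfies $\dcat(M) = \cat(M)$, equal to $1$ when $M \simeq S^4$ and $2$ otherwise. For case (\ref{two2}), the proposition just established in Section~\ref{fourmani} yields $\dcat(M) = \cat(M) = 2$. So in these three cases the proof reduces to a direct synthesis of earlier results with Theorem~\ref{cth}.

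Case (\ref{three3}) is even shorter: the hypotheses on $K_2$ are exactly those of Theorem~\ref{2}, which yields $\dcat(M) = \cat(K_1 \# K_2) = n$. Since $N$ is any closed $n$-manifold, the bound $\dcat(N) \le \cat(N) \le n = \dcat(M)$ is automatic, and the conclusion follows without even invoking the degree hypothesis on $f$.

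I do not anticipate any genuine obstacle, since all the nontrivial work sits inside the theorems being cited and the role of the present proposition is to package them together. The real conceptual limitation --- worth flagging in a brief remark after the proof --- is that this strategy cannot address Question~\ref{conj} whenever $\dcat(M) < \cat(M)$ strictly, for instance when $M = \R P^3$, or conjecturally $M = \R P^2 \times S^1$ in view of Remark~\ref{justref}. That gap is precisely why the full analog of Rudyak's conjecture for $\dcat$ remains open.
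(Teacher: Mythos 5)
Your argument is correct and follows the paper's own proof almost verbatim: in all four cases one first establishes $\dcat(M)=\cat(M)$ (via Theorem~\ref{bb}, the discussion in Section~\ref{fourmani}, and Theorem~\ref{2}, respectively) and then chains $\cat(M)\ge\cat(N)\ge\dcat(N)$ using Theorem~\ref{cth}. Your added observation that case~(\ref{three3}) is actually independent of the degree hypothesis — since $\dcat(M)=n$ already dominates $\dcat(N)\le\dim(N)=n$ — is a small sharpening the paper does not make explicit, but it does not change the strategy.
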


\begin{proof}
In case~(\ref{one1}), we get $\dcat(M) = \cat(M)$ due to Theorem~\ref{bb}. Similarly, in cases~(\ref{two22}) and~(\ref{two2}), $\dcat(M) = \cat(M)$ by our discussions in Section~\ref{fourmani}. In case~(\ref{three3}), we apply Theorem~\ref{2} in dimensions $3$ and $4$ to get $\dcat(M) = \cat(M)$ again. Thus, in all these cases, Theorem~\ref{cth} gives $\dcat(M) = \cat(M) \ge \cat(N) \ge \dcat(N)$.
\end{proof}

\section{For generalized connected sums}\label{gensum}

In this section, we will find sufficient conditions for $\dcat$ of some generalized connected sums (or fiber sums) of orientable manifolds to be maximum.

\subsection{Sums along submanifolds}\label{6.1}

Let $M$ and $N$ be two closed smooth orientable $n$-manifolds for $n \ge 3$. Let $S$ be a closed orientable manifold of dimension at most $n$. Suppose there are embeddings $i:S\to M$ and $j:S \to N$ with respective normal bundles $\mu$ and $\nu$ and an orientation-reversing bundle isomorphism $g:\mu\to\nu$. Such a map $g$ exists, in particular, when $\dim(S)=n-2$ and $e(\mu)=-e(\nu)$ for the Euler characteristics. Using smoothness, we can choose tubular neighborhoods $U\subset M$ and $V\subset N$ of $i(S)$ and $j(S)$, respectively, such that $U$ and $V$ are identified with the total spaces of $\mu$ and $\nu$, respectively. Then $g$ yields a diffeomorphism $h:U\to V$ that maps $i(S)$ to $j(S)$. This produces an orientation-reversing diffeomorphism $\phi:U - i(S) \to V -j(S)$, see~\cite[Section 3]{IKRT}. Let us denote this space (up to homeomorphism) by $L$. We now take the disjoint union 
\[
(M-i(S))\sqcup (N-j(S))
\]
and identify $U-i(S)$ with $V-j(S)$ using $\phi$ to obtain a closed orientable $n$-manifold $M\#_L\hspace{0.2mm}N$, which we call the \emph{generalized connected sum} of $M$ and $N$ along $L$.

Since $N$ is a closed manifold, $N/L$ is a compact pseudomanifold (with a CW structure) of dimension $n$ in the sense of~\cite[Section 2.4]{Dr1}. The orientation sheaf $\O_{N/L}$ on $N/L$ gives a fundamental class $[N/L]$ that defines the Poincaré duality homomorphism $\text{PD}_n$ from Section~\ref{basics} in dimension $n$. When $N/L$ is orientable, $\text{PD}_n$ is an isomorphism for each coefficient system, see~\cite[Theorem 2.4.1]{Dr1}.

\begin{proof}[Proof of Theorem~\ref{3}]
    First, we note that since $N$ and $N/L$ are closed and orientable of dimension $n$, $N/L$ has non-trivial cohomology in degree $n$. Since $N/L$ is aspherical as well, we have $\cd(\pi_1(N/L))\ge n$. Hence, $\dcat(N/L)\ge n$ by Theorem~\ref{eg}. Due to Proposition~\ref{characterization}, the fibration $\B_n(p):P(N/L)_n \to N/L$ does not admit a section. Hence, the primary obstruction $\kappa_n \in H^n(N/L;\F)$ to a section of $\B_n(p)$ is non-zero. Let us define the collapsing map 
    \[
    \phi:M\#_L\hspace{0.2mm}N\to (M\#_L\hspace{0.2mm}N)/M=N/L.
    \]
    By definition, $\phi^*(\kappa_n)\in H^n(M\#_L\hspace{0.2mm}N,\phi^*(\F))$ is the primary obstruction to a lift of $\phi$ with respect to $\B_n(p)$. Because $\dim(M\#_L\hspace{0.4mm}N)=n$, it is the only obstruction to such a lift of $\phi$. Due to orientability, $\phi$ sends $\O_{N/L}$ to the orientation sheaf $\O$ on $M\#_L\hspace{0.2mm}N$. Furthermore, $\text{deg}(\phi)=1$. Hence, in $\O_{N/L} \otimes \F$ coefficients, we get
\[
\phi_*\left([M\#_L\hspace{0.2mm}N] \frown \phi^*(\kappa_n)\right) = \phi_*([M\#_L\hspace{0.2mm}N]) \frown \kappa_n = [N/L] \frown \kappa_n \ne 0
\]
due to Poincaré duality in $N/L$ in the top dimension. This means $\phi^*(\kappa_n) \ne 0$ and so, a lift of $\phi$ does not exist with respect to $\B_n(p)$. Hence, using Propositions~\ref{obvious}~(\ref{t2}) and~\ref{lift}, we get the inequality
\[
n\le \dcat(\phi)\le \dcat(M\#_L\hspace{0.2mm}N) \le \cat(M\#_L\hspace{0.2mm}N)\le \dim(M\#_L\hspace{0.2mm}N)=n.
\]
\end{proof}

Such orientable aspherical pseudomanifolds can be obtained, for example, using the (relative) strict hyperbolization technique, see~\cite{DaJ},~\cite{CD},~\cite{Bel}. Let $X$ be a compact orientable $n$-manifold with boundary $\pa X\ne \emptyset$. Let $(R,\pa R)$ be the relative strict hyperbolization of $(X,\pa X)$. We note that $R$ is a compact orientable $n$-manifold,~\cite[1f]{DaJ}. Here, the $n$-pseudomanifold $R/\pa R$ can be seen as the strict hyperbolization of the space obtained from $X$ by attaching a cone over $\pa X$, see \cite[Page 4]{Bel}. Hence, $R/\pa R$ is closed and aspherical.

\begin{remark}
It is not known if the connected sum formula~\cite{DS} for classical LS-category ($\cat$) works for certain generalized connected sums as well, such as those studied in this section. Hence, Theorem~\ref{3} potentially provides new computations of $\cat$ for some special fiber sums.
\end{remark}

\subsection{Doubles of punctured manifolds}\label{gensum2}

We now consider a particular case of the generalized connected sum discussed above for compact manifolds. Let $M$ be a compact oriented $n$-manifold with boundary $\pa M\ne \emptyset$. We take a copy $\ov{M}$ of $M$ with reverse orientation and glue $M$ and $\ov{M}$ along $\pa M$ to form 
\[
\mathcal{D}M = M\#_{\pa M}\hspace{0.2mm}\ov{M}.
\]
This is a closed oriented $n$-manifold which we call the \emph{double of} $M$. 

Recall that any $k\ge 1$, a \emph{$k$-puncturing} of a closed orientable $n$-manifold $M$ refers to the process of the removal of $k$ number of small, disjoint open $n$-discs (or the interiors of closed $n$-discs) from $M$.

\begin{proof}[Proof of Theorem~\ref{4}]
Let $Z = \prod_{i=1}^n M_i$ and $m = \sum_{i=1}^n r_i$. For each $i\le n$, we can obtain $M_i$ back from $X_i$ by filling each of the $k_i$ punctures in $X_i$ by a suitable $r_i$-disc. Certainly, the inclusion $\mathcal{I}_i:X_i \hookrightarrow M_i$ is an embedding. So, there is a canonical map $\tau':M\to Z$. Let $q_1:M \to M/\pa M$ and $q_2:Z \to Z/\pa Z$ be the quotient maps that collapse the respective boundaries. Since $Z$ is closed, $q_2$ is the identity map on $Z$. Thus, $\tau'$ produces a natural map $\tau: M/\pa M \to Z$. Let 
\[
\phi:\mathcal{D}M \to \mathcal{D}M/M = M/\pa M
\]
be the collapsing map with $\text{deg}(\phi)=1$. Then, it follows that $(\tau \circ \phi)_*([\mathcal{D}M]) = [Z]$. Since $Z$ is aspherical of dimension $m$, the primary obstruction $\kappa_{m}\in H^{m}(Z;\F)$ to a section of the fibration $\B_{m}(p_Z):P(Z)_{m}\to Z$ is non-zero (see Section~\ref{torsionfree}). Clearly, 
\[
(\tau \circ \phi)^*(\kappa_{m})\in H^{m}(\mathcal{D}M;(\tau \circ \phi)^*(\F))
\]
is the primary obstruction to a lift of the map $\tau \circ \phi$ with respect to $\B_{m}(p_Z)$, and since $\dim(\mathcal{D}M)=m$, it is the only obstruction to such a lift of $\tau \circ \phi$. We note that $\tau \circ \phi$ induces an isomorphism 
\[
    (\tau \circ \phi)_*:H_0(\mathcal{D}M;(\tau \circ \phi)^*(\A)) \to H_0(Z;\A)
\]
for each coefficient system $\A$. Since $M$, $\mathcal{D}M$, and $Z$ are all orientable manifolds, $\tau \circ \phi$ sends the orientation sheaf $\O_Z$ on $Z$ to the orientation sheaf $\O$ on $\mathcal{D}M$. Thus, in $\O_Z\otimes\F$ coefficients, we have that
\[
(\tau \circ \phi)_*\left([\mathcal{D}M] \frown (\tau \circ \phi)^*(\kappa_{m})\right) = (\tau \circ \phi)_*([\mathcal{D}M]) \frown \kappa_{m} = [Z] \frown \kappa_{m} \ne 0
\]
due to Poincaré duality in $Z$. Thus, $(\tau \circ \phi)^*(\kappa_{m}) \ne 0$. Hence, we conclude that a lift of $\tau \circ \phi$ does not exist with respect to $\B_{m}(p_Z)$. Due to Propositions~\ref{obvious}~(\ref{t2}) and~\ref{lift}, we have that
\[
m\le \dcat(\tau \circ \phi)\le \dcat(\mathcal{D}M) \le m.
\]
Therefore, $\dcat(\mathcal{D}M) = \dim(\mathcal{D}M)=m$. 
\end{proof}

\begin{ex}
    If $X_i$ is a punctured closed nilmanifold of dimension $r_i$ for $1 \le i \le n$ and $M=\prod_{i=1}^n X_i$, then $\dcat(\mathcal{D}M) = \sum_{i=1}^n r_i\hspace{0.5mm}$ by Theorem~\ref{4}.
\end{ex}

\begin{remark}
    In the notations of Theorem~\ref{4}, more generally, we can take each $M_i$ to be a closed orientable essential $r_i$-manifold such that $B\pi_1(M_i)$ is a closed orientable $r_i$-manifold for all $1 \le i \le n$. Then we can proceed along the lines of Theorem~\ref{4} using a classifying map $g:Z\to \prod_{i=1}^nB\pi_1(M_i)$ and some ideas from Theorem~\ref{2} to obtain $\dcat(\mathcal{D} M) = \sum_{i=1}^n r_i$. So, in particular, we can take the non-aspherical symplectically aspherical branched covers $X^n$ from class $\mathscr{C}$ in Example~\ref{cover}, puncture them by deleting finitely many small, disjoint open $2n$-discs, take their product after puncturing, and conclude that the distributional category of the double of that punctured product is equal to its dimension.
\end{remark}

We mention that we have not been able to find results in the existing literature showing that the above \emph{double manifolds} are essential. So, we believe, Theorem~\ref{4} gives new computation of the classical LS-category of these manifolds using purely obstruction-theoretic techniques.

\subsection{Geometrically decomposable $4$-manifolds}\label{newsubsect}
We follow Hillman~\cite{Hi} and say that a $4$-manifold $M$ is \emph{geometrically decomposable} if it has a finite family $\mathcal{S}$ of disjoint $2$-sided hypersurfaces $S$ such that each component (called a \emph{piece}) of $M\setminus\cup S$ is geometric of finite volume. 
\begin{theorem}[\protect{\cite[Theorem 7.2]{Hi}}]\label{hillman}
    If a closed $4$-manifold $M$ admits a geometric decomposition, then exactly one of the following is true.
    \vspace{-2mm}
    \begin{enumerate}
                \itemsep-0.3em 
        \item $M$ is geometric.\label{tt1}
        \item $M$ is the total space of an $S^2$-orbifold bundle over a hyperbolic $2$-orbifold.\label{tt2}
        \item All components of $M\setminus \cup S$ have geometry $\mathbb{H}^2\times\mathbb{H}^2$.\label{t3}
        \item The components of $M\setminus \cup S$ have geometry $\hspace{0.2mm}\mathbb{H}^4$, $\mathbb{H}^3\times \mathbb{E}^1$, $\mathbb{H}^2\times\mathbb{E}^2$, or $\hspace{0.5mm}\wt{\mathbb{SL}}\times \mathbb{E}^1$.\label{t4}
        \item The components of $M\setminus \cup S$ have geometry $\mathbb{H}^2(\C)$ or $\hspace{0.5mm}\mathbb{F}^4$.\label{t5}
    \end{enumerate}
\end{theorem}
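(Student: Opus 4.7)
My plan is to treat this as a classification result driven by Filipkiewicz's list of the eighteen $4$-dimensional Thurston geometries, combined with a case analysis of which geometries admit noncompact finite-volume representatives and what the cross-sections of their ends look like. The overall strategy is: separate off the case where $M$ itself is geometric (case (\ref{tt1})), and then, under the assumption that the decomposition is nontrivial, restrict the geometries allowed on the pieces by matching the hypersurfaces in $\mathcal{S}$ with the ends of finite-volume noncompact geometric $4$-orbifolds.

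First I would enumerate the $4$-dimensional geometries and identify, for each one, whether it supports a finite-volume noncompact quotient. Several geometries (e.g.\ $\mathbb{S}^4$, $\mathbb{CP}^2$, $\mathbb{S}^2\times\mathbb{S}^2$, $\mathbb{S}^3\times\mathbb{E}^1$, $\mathbb{S}^2\times\mathbb{E}^2$, $\mathbb{E}^4$, $\mathrm{Nil}^4$, the solvable geometries, etc.) force compactness of any finite-volume piece and hence, if they occur, must account for the whole of $M$, putting us in case (\ref{tt1}). For the geometries that do admit cusps, I would compute the cross-sectional $3$-manifold structure at each cusp: for the real hyperbolic types $\mathbb{H}^4$, $\mathbb{H}^3\times\mathbb{E}^1$, $\mathbb{H}^2\times\mathbb{E}^2$, and $\widetilde{\mathbb{SL}}\times\mathbb{E}^1$ the cusp links are flat or $\mathrm{Nil}$/$\mathbb{E}^3$-geometric $3$-manifolds with compatible holonomy, while for $\mathbb{H}^2(\mathbb{C})$ and $\mathbb{F}^4$ one gets $\mathrm{Nil}^3$-type cusp cross-sections, and for $\mathbb{H}^2\times\mathbb{H}^2$ the cusps are Sol-like and will not match anything from cases (\ref{t4})--(\ref{t5}). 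Case (\ref{tt2}) would be isolated separately by observing that if any piece carries an $\mathbb{S}^2$ factor geometry such as $\mathbb{S}^2\times\mathbb{H}^2$, the global $S^2$-fibration structure forces $M$ to be an $S^2$-orbifold bundle over a hyperbolic base.

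The matching step is the heart of the argument: across any hypersurface $S\in\mathcal{S}$ the two adjacent pieces must have isometric neighborhoods of their corresponding ends, and in particular their cusp cross-sections must agree as $3$-manifolds with their induced affine/geometric structures. Running this check would show that the cusp types in (\ref{t4}) are mutually compatible but do not mix with those in (\ref{t5}), that the $\mathbb{H}^2\times\mathbb{H}^2$ cusps only glue to other $\mathbb{H}^2\times\mathbb{H}^2$ cusps (giving case (\ref{t3})), and that $\mathbb{H}^2(\mathbb{C})$ and $\mathbb{F}^4$ share a common $\mathrm{Nil}$-type cusp structure (giving case (\ref{t5})). Together with the reduction for compact-only geometries, this exhausts the possibilities, with mutual exclusivity coming from the fact that the cusp-matching groupings are pairwise disjoint.

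The principal obstacle, and the step that absorbs most of the technical work, is the fine classification of finite-volume cusp cross-sections for each of the mixed geometries $\mathbb{H}^3\times\mathbb{E}^1$, $\mathbb{H}^2\times\mathbb{E}^2$, $\widetilde{\mathbb{SL}}\times\mathbb{E}^1$, and $\mathbb{F}^4$, together with showing that an abstract gluing of two matching cusps can in fact be realized by a $2$-sided hypersurface embedded in a closed $4$-manifold. This requires careful bookkeeping of the discrete groups acting on each model geometry (in particular the structure of their parabolic subgroups) and rules out spurious mixings that look plausible at the level of underlying topology but fail at the level of geometric holonomy. Once this matching lemma is in place, the five-case dichotomy and its exclusivity follow formally from the case analysis above.
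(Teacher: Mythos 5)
This statement is not proved in the paper; it is quoted verbatim as an external result, namely \cite[Theorem 7.2]{Hi} from Hillman's monograph on four-manifolds. There is therefore no ``paper's own proof'' to compare your outline against --- the paper simply invokes Hillman's theorem and then works with its conclusion in Section~\ref{newsubsect}. Attempting to reprove it is not what the paper does, and would substantially exceed the scope of the paper.

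That said, your sketch is broadly in the spirit of Hillman's argument (Filipkiewicz's classification of the maximal $4$-dimensional geometries together with an analysis of finite-volume cusps and how they can be glued along $2$-sided hypersurfaces), but it has concrete inaccuracies and it leaves the decisive technical content entirely deferred. Two specific problems: (i) you state there are eighteen Thurston geometries in dimension four, whereas the count is nineteen (the paper itself records this, noting one of them is not realizable by a closed manifold); and (ii) you describe the cusp cross-sections of $\mathbb{H}^2\times\mathbb{H}^2$-pieces as ``Sol-like,'' but in the reducible case (products of punctured hyperbolic surfaces, which are the relevant finite-volume noncompact quotients here) the boundary hypersurfaces are nontrivial graph $3$-manifolds, not Sol manifolds --- the paper makes exactly this point when discussing $\partial M$ for $M = X_1\times X_2$. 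Beyond these, the heart of the proof --- identifying precisely which geometries admit cusped finite-volume quotients, classifying the resulting cusp cross-sections with their affine structures, proving the cusp-type groupings are pairwise incompatible across cases (\ref{t3})--(\ref{t5}), and isolating case (\ref{tt2}) via the $S^2$-fibration structure --- is acknowledged but not carried out in your proposal. As written this is a plausible plan of attack rather than a proof, and for the purposes of this paper it is moot, since the paper takes the theorem as given.
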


From here onwards, we use the term \emph{surface} to refer to a closed orientable $2$-manifold of genus $\ge 1$. For the facts used in the following discussion, we refer the reader to~\cite[Sections 7.1 and 7.5]{Hi}.

Due to Filipkiewicz, there are nineteen maximal $4$-dimensional geometries, one of which is not realizable by any closed manifold. If a closed manifold $M$ belongs to class~(\ref{tt1}) and
has one of the twelve aspherical geometries, then $\dcat(M) = 4$ due to Corollary~\ref{kwobv}. If $M$ belongs to class~(\ref{t4}) or~(\ref{t5}), then $M$ is aspherical and thus, $\dcat(M) = 4$. If $M$ belongs to class~(\ref{t3}) such that a geometric piece of $M$ is finitely covered by the product of a surface and a punctured surface, then $M$ is aspherical again and thus, we get $\dcat(M) = 4$. However, if a geometric piece of $M$ is finitely covered by the product of two punctured surfaces, then $M$ has only two geometric pieces and it is not aspherical. So, for new computations of $\dcat$, we are interested in these manifolds belonging to class~(\ref{t3}). 

Up to finite covers, the $4$-manifolds having $\mathbb{H}^2\times\mathbb{H}^2$ geometry are the products of two punctured surfaces with negative Euler characteristics. For $i=1,2$, let us fix some integers $g_i\ge 1$ and $k_i \ge 1$. Let $X_i$ be obtained from the surface $\Sigma_{g_i}$ after a $k_i$-puncturing of $\Sigma_{g_i}$.
If $M = X_1\times X_2$, then its double 
\[
\mathcal{D}M = M\#_{\pa M}\hspace{0.2mm}\ov{M}
\]
is a closed orientable $4$-manifold belonging to class~(\ref{t3}) described above and it is not aspherical. The simplest such example is the double of $T_o \times T_o$, where $T_o = T^2\setminus \text{int}(D^2)$ is the once-punctured $2$-torus and $D^2$ denotes the closed $2$-disc.

\begin{remark}
    The boundary $\pa M$ of $M$ is a non-trivial graph $3$-manifold. When $k_1 = k_2 = 1$, i.e., $X_1$ and $X_2$ are once-punctured surfaces, it is easy to describe $\pa M$ explicitly. Each $X_i$ has an infinite pipe, so let us visualize capping $X_i$ by a closed $2$-disc $D_i$ whose boundary $\pa D_i$ is the boundary of $X_i$ represented by a loop $t_i$.
    The boundary of $X_i\times \pa D_i$ is a $2$-torus $T_i$. Let $y_i,z_i \in H_1(T_i;\Z)$ be the canonical generators for $i = 1,2$. We choose a diffeomorphism $\phi:T_1\to T_2$ such that for the induced map $\phi_*$ in the first homology, $\phi_*(y_1)=\pm z_2$ and $\phi_*(z_1)=\pm y_2$. 
Then, 
\[
\pa M = ((X_1\times \pa D_1)\sqcup(X_2\times \pa D_2))/\sim,
\]
where $x \sim \phi(x)$ for all $x \in T_1$. Basically, $\pa M$ is formed by identifying the loops $t_1$ and $t_2$ with the commutators of $X_2$ and $X_1$, respectively. If $a_1,b_1,\ldots,a_{g_1},b_{g_1}$ are loops in $X_1$ and $c_1,d_1,\ldots,c_{g_2},d_{g_2}$ are loops in $X_2$, then $\pi_1(\pa M)$ can be described as follows.
\begin{multline*}
\pi_1(\pa M) = \bigl\{ a_1,b_1,\ldots,a_{g_1},b_{g_1},c_1,d_1,\ldots,c_{g_2},d_{g_2},t_1,t_2 \hspace{1mm}|\hspace{1mm} t_2 = \left[a_1,b_1\right]\cdots\left[a_{g_1},b_{g_1}\right], 
\\
t_1 = \left[c_1,d_1\right]\cdots\left[c_{g_2},d_{g_2}\right],\left[a_i,c_j\right]=\left[a_i,d_j\right]=\left[b_i,c_j\right]=\left[b_i,d_j\right]=1 \text{ for all } i,j \bigr\}.
\end{multline*}
From this description, it is also easy to see that $\mathcal DM$ is not aspherical. Clearly, there exists $0\ne\gamma\in\pi_1(\pa M)$ such that its image $\gamma^* \in \pi_1(M)$ under the map induced by $\pa M \hookrightarrow M$ is non-zero. Suppose $\pi_2(\mathcal{D}M)=0$. Since $\gamma^*$ is a loop in $M$ coming from $\pa M$, inside $M$, we can then attach a $2$-disc $B_1$ to $\gamma^*$ from the left side of $\mathcal{D}M$. Similarly, inside $\ov{M}$, another $2$-disc $B_2$ can be attached to $\gamma^*$ from the right side of $\mathcal{D}M$. This produces a sphere $S^2$ in $\mathcal{D}M$ inside which the loop $\gamma^*$ is contractible. This contradicts the non-triviality of $\gamma^*$. Hence, $\pi_2(\mathcal{D}M) \ne 0$.
\end{remark}

As an application of Theorem~\ref{4}, we obtain the following new computation for $\dcat$ (and hence $\cat$) of some closed non-aspherical geometrically decomposable $4$-manifolds belonging to class~(\ref{t3}) of Theorem~\ref{hillman}.

\begin{cor}\label{gap}
    If $M$ is the product of two finitely punctured closed orientable surfaces of genera $\ge 1$, then $\dcat(\mathcal{D}M) = 4$ for the double $\mathcal{D}M$ of $M$.
\end{cor}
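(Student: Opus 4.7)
The statement is a direct specialization of Theorem~\ref{4}, so the plan is simply to verify that the hypotheses of that theorem are met and to read off the conclusion. I would set $n=2$ in Theorem~\ref{4}, let $M_1 = \Sigma_{g_1}$ and $M_2 = \Sigma_{g_2}$ be the two closed orientable surfaces of genera $g_1, g_2 \ge 1$ from which $M$ arises, and let $k_1, k_2 \ge 1$ be the respective numbers of punctures so that $M = X_1 \times X_2$ with $X_i$ a $k_i$-punctured $\Sigma_{g_i}$.

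Next I would check asphericity of each $M_i$: since $g_i \ge 1$, the surface $\Sigma_{g_i}$ is a $K(\Gamma_{g_i},1)$ (its universal cover is either $\R^2$ when $g_i = 1$ or the hyperbolic plane $\mathbb{H}^2$ when $g_i \ge 2$), so $\Sigma_{g_i}$ is a closed orientable aspherical $r_i$-manifold with $r_i = 2$. Thus all the hypotheses of Theorem~\ref{4} are satisfied.

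Invoking Theorem~\ref{4} then yields
\[
\dcat(\mathcal{D}M) \;=\; \sum_{i=1}^{2} r_i \;=\; 2 + 2 \;=\; 4,
\]
which is exactly the claim. There is no real obstacle here: all the work has already been absorbed into Theorem~\ref{4}, whose proof uses the orientability of each $\Sigma_{g_i}$, Poincaré duality in the aspherical product $Z = \Sigma_{g_1} \times \Sigma_{g_2}$, and the non-vanishing of the primary obstruction $\kappa_4 \in H^4(Z;\F)$ pulled back along $\tau \circ \phi : \mathcal{D}M \to Z$. The only thing worth flagging in the write-up is that the construction of $M$ as a product of finitely punctured surfaces matches verbatim the setup preceding Theorem~\ref{4}, so the application is immediate.
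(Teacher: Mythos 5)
Your proposal is correct and matches the paper's own proof: the paper likewise applies Theorem~\ref{4} with $M_i = \Sigma_{g_i}$, $r_i = 2$, and $Z = \Sigma_{g_1}\times\Sigma_{g_2}$, relying on the asphericity and orientability of genus $\ge 1$ surfaces. Nothing is missing.
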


\begin{proof}
    For $i=1,2$ and given $g_i,k_i\ge 1$, we take $M_i = \Sigma_{g_i}$, $M=X_1\times X_2$, and $Z = \Sigma_{g_1}\times\Sigma_{g_2}$ in the proof of Theorem~\ref{4} and get $\dcat(\mathcal{D}M) = 4$. 
\end{proof}

It seems that a proof of the essentiality of the closed non-aspherical geometrically decomposable $4$-manifolds belonging to class~(\ref{t3}) of Theorem~\ref{hillman} is missing from the literature. Our Corollary~\ref{gap} fills this apparent gap.

\section{For Alexandrov spaces}\label{alexalex1}

We now study A. D. Alexandrov spaces with curvature bounded below. In this section, we will extend some of our results for closed manifolds from the previous sections to a more general class of closed Alexandrov spaces, thereby initiating a study of the LS-category of Alexandrov spaces.

\subsection{Some definitions}\label{definitions}
An Alexandrov space is a finite-dimensional, complete, locally compact length space with an inner length metric that locally satisfies a lower curvature bound in the sense of distance comparison. Alexandrov spaces are locally contractible metric ANR,~\cite{Wu}. We refer to~\cite{BGP},~\cite{BBI},~\cite[Section 2]{survey}, and~\cite[Section 6]{Mit} for their precise definitions. Examples of Alexandrov spaces include orbit spaces of isometric compact Lie group actions on complete Riemannian manifolds whose sectional curvatures are uniformly bounded below. A non-manifold and non-orbifold example is the suspension of $\C P^2$, denoted $\s(\C P^2)$. For more examples and constructions, we refer the reader to~\cite[Section 2.3]{survey}.

Let $X$ be an $n$-dimensional Alexandrov space without boundary. At each point $p \in X$, one can define tangent directions and an angle distance between them, see~\cite{BGP} and~\cite{survey} for details. The metric completion of the tangent directions gives the \emph{space of directions at} $p$, denoted $\varSigma_p$, which is an Alexandrov $(n-1)$-space. If $n = 3$, then for each $p \in X$, either $\varSigma_p = S^2$ or $\varSigma_p = \R P^2$. If $n =4$, then $\varSigma_p = \s(\R P^2)$ or $\varSigma_p$ is a $3$-manifold having the spherical geometry,~\cite[Corollary 2.3]{classification}. The Euclidean cone over $\varSigma_p$ yields the \emph{tangent cone at} $p$, denoted $K_p$, which is an Alexandrov $n$-space. By Perelman's conical neighborhood theorem, every $p \in X$ has a neighborhood homeomorphic to $K_p$. A point $p \in X$ is called \emph{regular} if $\varSigma_p = S^{n-1}$; otherwise, it is called \emph{singular}. For $n \ge 3$, the codimension of the set of singular points of $X$ is at least $3$ by a theorem of Perelman. So, when $n = 3$ and $X$ is compact, the set of its singular points is finite. If all points of $X$ are regular, then $X$ is a manifold, also called an Alexandrov $n$-manifold. The converse is true when $n \le 4$ and not true otherwise~\cite[Section 2.2]{classification}.

When $n \le 2$, an Alexandrov $n$-space is a manifold, see~\cite[Corollary 10.10.3]{BBI}. So, the non-manifold cases arise when $n \ge 3$. In these cases, we first want to define a notion of the ``connected sum" of two closed Alexandrov $n$-spaces, which is a direct generalization of the notion introduced for $n = 3$ in~\cite[Section 3.1]{decomposition}. 

    Let $M$ and $N$ be two closed Alexandrov $n$-spaces. Let $p\in M$ and $q \in N$ such that $\varSigma_p = \varSigma_q$. Using Perelman's theorem, we choose neighborhoods $U \subset M$ of $p$ and $V \subset N$ of $q$ such that $U = K_p$ and $V = K_q$. Since $\pa(M\setminus U) = \varSigma_p$ and $\pa(N\setminus V) = \varSigma_q$, there exists a homeomorphism $\psi:\pa(M\setminus U) \to \pa(N\setminus V)$. 

\begin{defn}\label{alexsum}
    The \emph{connected sum of $M$ and $N$ along $(p,q)$}, denoted $M\#^{(p,q)}N$, is defined as the quotient space 
    \[
    ((M\setminus U) \sqcup (N\setminus V))/\sim,
    \]
    where $x\sim \psi(x)$ for all $x \in \pa(M\setminus U)$.
\end{defn}

Indeed, $M\#^{(p,q)}N$ is a closed Alexandrov $n$-space with curvature bounded below --- see, for example,~\cite[Section 3]{decomposition} and~\cite[Page 9]{survey}). Unlike the case of the connected sum of closed manifolds, in general, the sum $M\#^{(p_1,q_1)} N$ need not be homeomorphic to $M\#^{(p_2,q_2)} N$, even if $M = N$, $p_1=q_1$, $p_2 = q_2$, and $\varSigma_{p_1} = \varSigma_{p_2}$, as shown in~\cite[Theorem A]{decomposition} for $n = 3$. However, when $\varSigma_p = \varSigma_q = S^{n-1}$ and there is no risk of confusion regarding $p$ and $q$, we will write $M\# N$ to denote the usual connected sum of the Alexandrov spaces $M$ and $N$ along closed $n$-discs.

\subsection{Extension of some results}
To the best of the author's knowledge, the classical LS-category of closed Alexandrov spaces has not been well studied. In particular, it is not known whether the connected sum formula for closed manifolds \cite[Theorem 1]{DS} holds for the connected sum of closed Alexandrov spaces as well! Using our techniques from Section~\ref{connsums}, we provide in this section a partial such formula for $\dcat$ in the special case when one of the Alexandrov spaces is aspherical. This extends the classical connected sum formula for $\cat$ to closed non-manifold Alexandrov spaces in some cases.

The unified notion of orientability of Alexandrov spaces without boundary was studied by Mitsuishi in~\cite{Mit} (see also~\cite[Section 2]{HS}). Let $R$ be any principal ideal domain. If $X$ is a closed $R$-orientable Alexandrov $n$-space, then $X$ has a $R$-fundamental class $[X]_R\in H_n(X;R)$ by~\cite[Theorem 1.8 and Remark 2.11]{Mit}. We note that every closed Alexandrov space is $\Z_2$-orientable and $\s(\R P^2)$ is not $\Z$-orientable, see~\cite[Section 5.2]{Mit}.

\begin{theorem}[\protect{\cite[Theorem 5.1]{Mit}}]\label{mit5.1}
    Let $R$ be a principal ideal domain and $X$ be a closed $R$-orientable Alexandrov $n$-space. Then for any $R$-module $G$, the Poincaré duality homomorphism $\textup{PD}_{n}: H^n(X;G) \to H_{0}(X;G)$ is an isomorphism.
\end{theorem}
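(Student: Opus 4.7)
The plan is to prove this top-dimensional Poincar\'e duality statement by combining the pseudomanifold structure of closed Alexandrov spaces with the existence of the $R$-fundamental class $[X]_R$ guaranteed by the $R$-orientability hypothesis. Under the paper's standing path-connectedness convention, the augmentation identifies $H_0(X;G)$ canonically with $G$, so $\textup{PD}_n$ becomes the Kronecker evaluation $\alpha \mapsto \langle \alpha, [X]_R\rangle \in G$, and the task reduces to showing this pairing is perfect in top degree.

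The argument will proceed in three steps. First, I would invoke Perelman's theorem (recalled in Section~\ref{definitions}) that the singular set $S \subset X$ has codimension at least $3$, so $X$ is a closed $R$-orientable $n$-pseudomanifold in the sense of \cite[Section 2.4]{Dr1}. Second, surjectivity of $\textup{PD}_n$ is essentially immediate: on a compatible triangulation, $[X]_R$ is represented by a signed sum of oriented top-dimensional simplices with unit coefficients, and since $X$ has no $(n+1)$-cells, every $n$-cochain is automatically a cocycle; given $g \in G$, one constructs a cocycle evaluating to $\pm g$ on $[X]_R$ by setting it equal to $g$ on a single simplex appearing in the fundamental cycle and zero elsewhere. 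Third, for injectivity I would factor $\textup{PD}_n$ through the universal coefficient exact sequence
\[
0 \to \mathrm{Ext}^1_R(H_{n-1}(X;R), G) \to H^n(X;G) \to \mathrm{Hom}_R(H_n(X;R), G) \to 0,
\]
which is valid because $R$ is a PID. Since $X$ is a connected $R$-orientable pseudomanifold, $H_n(X;R) \cong R$ is generated by $[X]_R$, so the right-hand term is isomorphic to $G$ via evaluation at $[X]_R$, and this composition with $\textup{PD}_n$ is the identity. Hence injectivity of $\textup{PD}_n$ reduces to the vanishing of $\mathrm{Ext}^1_R(H_{n-1}(X;R), G)$ for every $R$-module $G$, equivalently to the $R$-freeness of $H_{n-1}(X;R)$.

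The main obstacle is precisely this Ext-vanishing: for general pseudomanifolds $H_{n-1}$ can carry torsion, so one must use the codimension-$\geq 3$ bound on $S$ in an essential way (the weaker codimension-$\geq 2$ bound available for arbitrary pseudomanifolds does not suffice). The natural route is via excision: because $\dim S \le n-3$, the pair $(X, X\setminus S)$ satisfies $H_{n-1}(X, X\setminus S; R)=0$, and so $H_{n-1}(X;R) \cong H_{n-1}(X\setminus S;R)$. Since $X\setminus S$ is an open $R$-orientable $n$-manifold, manifold Poincar\'e--Lefschetz duality identifies this group with a compactly supported cohomology group of $X\setminus S$, which is $R$-free because the singular part of a compactly supported cochain complex of a manifold is torsion-free in the top-codimension-minus-one degree. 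This yields the desired Ext-vanishing and completes the proof.
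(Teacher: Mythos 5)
The statement is quoted in the paper directly from Mitsuishi's work (\cite[Theorem 5.1]{Mit}); the paper supplies no proof of its own, so there is no internal argument to compare against, but your proposal contains a genuine gap in the injectivity step.

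Reducing, via the universal coefficient sequence, to the $R$-freeness of $H_{n-1}(X;R)$ is a sensible move, and since $X$ is a compact ANR its homology is finitely generated, so torsion-freeness over the PID $R$ would suffice. However, the claim that $\dim S \le n-3$ forces $H_{n-1}(X, X\setminus S;R)=0$ is false: those relative groups are controlled not by the dimension of $S$ alone but by the local cone structure of $X$ near $S$. Excising to conical neighborhoods and using contractibility of tangent cones gives, for $S$ discrete, $H_k(X, X\setminus S;R)\cong\bigoplus_{p\in S}\tilde H_{k-1}(\varSigma_p;R)$, and $\tilde H_{n-2}(\varSigma_p;R)$ has no reason to vanish at a singular point. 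The paper's own example $X=\s(\R P^2)$ already defeats your claim: with $n=3$, $R=\Z_2$, and $S$ the two cone points (so $\dim S = 0 = n-3$), one gets $H_2(X,X\setminus S;\Z_2)\cong\tilde H_1(\R P^2;\Z_2)\oplus\tilde H_1(\R P^2;\Z_2)\ne 0$, and the long exact sequence of the pair then shows that $H_2(X\setminus S;\Z_2)\to H_2(X;\Z_2)$ is actually the zero map, not an isomorphism. Here the theorem's conclusion happens to hold because $\Z_2$ is a field, but your route to it collapses, and over $R=\Z$ nothing in your argument excludes torsion in $H_{n-1}(X;\Z)$ arising from the groups $\tilde H_{n-2}(\varSigma_p;\Z)$. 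So the codimension-$\ge 3$ bound is not the decisive input in the way you use it; establishing freeness of $H_{n-1}$ is exactly where the substantive work lies, and it requires an argument sensitive to the topology of the spaces of directions along the singular strata, not a codimension count combined with open-manifold duality. A secondary issue: the surjectivity step invokes a triangulation of $X$, but triangulability of Alexandrov spaces is open in general dimension; this is easily repaired, since surjectivity already follows from the right-hand surjection in the universal coefficient sequence once $H_n(X;R)\cong R\cdot[X]_R$ is granted (which the paper takes from \cite[Theorem 1.8]{Mit}).
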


Here, the homomorphism $\text{PD}_{n}: H^n(X;G) \to H_{0}(X;G)$ sends each cohomology class $\alpha \in H^n(X;G)$ to the cap product $[X]_R\frown \alpha \in H_{0}(X;G)$.

Unlike the case of closed manifolds, the existence of fundamental class and the satisfaction of Poincaré duality is very restrictive in the case of closed Alexandrov spaces, so we will be careful while extending our results.

Let $M$ and $N$ be two closed Alexandrov $n$-spaces such that $N$ is a manifold. We can choose regular points $p\in M$ and $q \in N$ (both $M$ and $N$ have infinitely many regular points) to form $M\#^{(p,q)} N$, which we will write as $M\# N$ from here onwards. 
Let $\O_N$ be the orientation sheaf on $N$. We follow the conventions from Section~\ref{connsums}. The fundamental class $[M\#N]_R$ always exists for either $R = \Z$ or $R = \Z_2$ (a $\Z$-fundamental class exists when both $M$ and $N$ are $\Z$-orientable) and gets mapped to $[N]$ under the homomorphism induced in homology by the map $\phi:M \#N \to N$ that collapses $M$ to a point. So, Theorem~\ref{5}, which produces some new computations for the classical LS-category ($\cat$), can now be proved.

\begin{proof}[Proof sketch of Theorem~\ref{5}]
    The proof for $\dcat(M\#N) \ge n$ is the same as that of Corollary~\ref{realmain2} and uses Poincaré duality in the manifold $N$ in $\O_N \otimes \F$ coefficients, where $\F$ is a $\pi_1(N)$-module such that $\kappa_n\in H^n(N;\F)$ is the non-zero primary obstruction to a section of $\B_n(p_N):P(N)_n \to N$ due to the asphericity of $N$. Since any closed Alexandrov $n$-space $X$ is an ANR, in particular, locally contractible~\cite{Wu}, we have $\cat(X)\le n$ by~\cite[Theorem 1.7]{CLOT}. Therefore, $\dcat(M\#N) = \cat(M\#N)=n$ is obtained.
\end{proof}

\begin{ex}[\protect{Flat manifolds}]\label{extraex}
    Any flat $n$-manifold $N$ is an aspherical Alexandrov $n$-manifold. So, $\dcat(M\#N) = n$ by Theorem~\ref{5} for any Alexandrov $n$-space $M$. This gives more examples. For $n \ge 4$, let $P^{n-3}$ be an Alexandrov $(n-3)$-space. For any $t\ge 1$, the product $\#_{i=1}^t\s(\R P^2) \times P^{n-3}$ is a non-manifold Alexandrov $n$-space. So, for any flat $n$-manifold $N$, we get 
    \[
    \dcat\left(\left(\#_{i=1}^t\s(\R P^2) \times P^{n-3}\right)\#N\right) = n
    \]
    by Theorem~\ref{5}. In particular, we can take $N = \mathcal{N}^k \times T^m$ for $n=k+m$, where $T^m$ is the $m$-torus and $\mathcal{N}^k$ is the generalized Hantzsche--Wendt $k$-manifold discussed in Example~\ref{hwmani}. Furthermore, Theorem~\ref{5} also recovers the new computation $\dcat((\mathcal{N}^k\times T^m)\#(\mathcal{N}^k\times T^m)) = k+m$ obtained in Example~\ref{hwmani}.
\end{ex}

In the orientable (or $\Z$-orientable) case, a proof simpler than that of Theorem~\ref{2} and Corollary~\ref{realmain2} can be given for (possibly non-manifold) Alexandrov spaces.

\begin{proof}[Proof of Theorem~\ref{6}]
   Since $N$ is a closed orientable aspherical Alexandrov $n$-space, we can use Theorems~\ref{eg} and~\ref{mit5.1} and~\cite[Theorem 1.8]{Mit} to get the equality
   $\dcat(N) = \cd(\pi_1(N)) = n$. So, in view of Proposition~\ref{characterization}, the primary obstruction $\kappa_n \in H^n(N;\F)$ to a section of the fibration $\B_n(p_N):P(N)_n \to N$ is non-zero.
   Due to orientability, $[M\#N]$ and $[N]$ exist such that $\phi_*([M\#N]) = [N]$ for the collapsing map $\phi:M\#N\to N$. Clearly, $\phi$ induces an isomorphism of the zeroth homology groups in each coefficient system. Therefore, we have 
   \[
   \phi_*([M\#N]\frown \phi^*(\kappa_n)) = \phi_*([M\#N])\frown \kappa_n=[N] \frown \kappa_n,
   \]
   where $\phi^*(\kappa_n)\in H^n(M\#N;\phi^*(\F))$ is the primary obstruction to a lift of $\phi$ with respect to $\B_n(p_N)$.  Since $\dim(M\# N)=n$, it is the only obstruction to such a lift of $\phi$. Here, $\F$ is a $\pi_1(N)$-module, hence an abelian group. By Poincaré duality in $N$ in $\F$ coefficients (see Theorem~\ref{mit5.1}), we have $[N]\frown\kappa_n \ne 0$. This means $\phi^*(\kappa_n) \ne 0$. So, $\phi$ does not admit a lift with respect to $\B_n(p_N)$. Thus, we get
   \[
   n \le \dcat(\phi)\le \dcat(M\#N) \le \cat(M\#N) \le n
   \]
   in view of Propositions~\ref{obvious}~(\ref{t2}) and~\ref{lift}.
\end{proof}

\begin{remark}
We note that the orientability assumption is important to prove Theorem~\ref{6}. If $N$ is not orientable in the sense of~\cite{HS}, then we will be dealing with $\O_N \otimes \F$ coefficients in $N$ for the cap product $[N]\frown\kappa_n$. Here, $\O_N$ is the ``orientation bundle" on $N$, see~\cite[Section 5.3]{Mit}. So, in this case, while we can still proceed along the lines of Theorem~\ref{2}, we cannot apply Theorem~\ref{mit5.1} to get $[N]\frown \kappa_n\ne 0$ because in general, $\O_N \otimes \F \ne \F$, even if $\O_N = \Z_2$.
\end{remark}

\begin{ex}
    Following the notations of Example~\ref{extraex}, we note that Theorem~\ref{6} also recovers the new computation $\dcat((\mathcal{N}^k\times T^m)\#(\mathcal{N}^k\times T^m)) = k+m$ obtained in Example~\ref{hwmani} since $\mathcal{N}^k$ is orientable for each odd $k \ge 3$. 
\end{ex}

Theorems~\ref{5} and~\ref{6} both partially extend Corollary~\ref{realmain2} to some non-manifold cases since they include the possibility of $M$ not being a manifold. In Theorem~\ref{5}, we do not require the orientability of $N$ or $M\#N$ but need $N$ to be a manifold. In Theorem~\ref{6}, we need orientability but do not require any space to be a manifold.

Formally, Theorem~\ref{6} gives new computations of $\cat$ and $\dcat$ in orientable non-manifold cases. Unfortunately, we do not have an example of a closed aspherical Alexandrov $n$-space that is not a manifold. For $n=3$, it has been conjectured in~\cite[Conjecture 5.1]{survey} that such examples do not exist.

\section{For non-manifold Alexandrov $3$-spaces}\label{alexalex2}

We now focus on three-dimensional Alexandrov spaces that have recently been studied extensively in~\cite{classification},~\cite{GGS},~\cite{NZ}, and~\cite{decomposition}, for example.
In this section, using some well-known results from Alexandrov geometry (in particular, from the above papers), we shall make some observations regarding the classical and the distributional categories of these spaces.

\subsection{Initial observations}\label{simpobs}
Let $X$ be a closed Alexandrov $3$-space. Then we have $\dcat(X)\le \cat(X) \le 3$. If $X$ is a simply connected CW complex, then by~\cite[Theorem 1.50]{CLOT}, $\dcat(X) = \cat(X) = 1$. Hence, if $X$ is a closed positively-curved Alexandrov $3$-space that is not a manifold, then we have $\dcat(X) = \cat(X)=1$ because $X = \s(\R P^2)$ due to~\cite[Theorem 1.1]{classification} and $\s(\R P^2)$ is a simply connected CW complex.

Given a closed Alexandrov $3$-space $X$, let $M_X$ be the compact non-orientable $3$-manifold with boundary obtained by removing disjoint regular neighborhoods of the finitely many singular points of $X$. So, the boundary of $M_X$ is a finite collection of $\R P^2$. Let 
\[
M_X = M_1\#\cdots\#M_n\#l(S^2\hspace{-0.5mm}\propto\hspace{-0.5mm} S^1)
\]
be the unique prime decomposition~\cite{He} of $M_X$, where $l\ge 0$ and $S^2\hspace{-0.5mm}\propto\hspace{-0.5mm} S^1$ denotes the non-orientable $S^2$-bundle over $S^1$. For each $1\le i \le n$, let $\wh{M_i}$ be the Alexandrov $3$-space obtained from $M_i$ by capping off its $\R P^2$ boundary components. Then,
\[
X = \wh{M_1} \hspace{0.5mm}\#\cdots\#\hspace{0.5mm}\wh{M_n}\hspace{0.5mm}\#\hspace{0.5mm}l(S^2\hspace{-0.5mm}\propto\hspace{-0.5mm} S^1)
\]
is the unique (up to permutation) \emph{normal prime decomposition} of $X$ that exists due to~\cite[Theorems B and C]{decomposition}. Here, each $\wh{M_i}$ is a connected sum of prime Alexandrov $3$-spaces. The notions of prime and irreducible Alexandrov $3$-spaces and normal prime decompositions are defined in~\cite[Section 3.2]{decomposition}.

    For some $j \le n$, if $\wh{M_j}$ is orientable and aspherical, or if it is an aspherical manifold, then $\dcat(X)= 3$ in view of Theorems~\ref{5} and~\ref{6}, respectively. The latter situation is possible when $M_j$ is aspherical with no $\R P^2$ boundary components.

\begin{lemma}\label{stupid}
    Let $X$ be a closed Alexandrov $3$-space with a CW structure. If $X$ has an $S^2\hspace{-0.5mm}\propto\hspace{-0.5mm} S^1$ summand in its normal prime decomposition, then $\dcat(X)>1$.
\end{lemma}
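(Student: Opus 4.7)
The plan is to combine the covering inequality \theoref{covv} with a rational cup-length bound on the orientation double cover of $X$. By hypothesis, the normal prime decomposition of $X$ contains an $S^2\hspace{-0.5mm}\propto\hspace{-0.5mm} S^1$ summand, which is a closed non-orientable $3$-manifold with fundamental group $\Z$. Because connected sums in the decomposition are taken at regular points and the collars $\simeq S^2$ of the gluing spheres are simply connected, van Kampen gives a free $\Z$ factor in $\pi_1(X)$, and hence a $\Q$-summand in $H_1(X;\Q)$, so $H^1(X;\Q)\ne 0$. Moreover, the orientation class $w_1\in H^1(X;\Z_2)$ in the sense of~\cite{Mit} is non-trivial on this free factor, so $X$ fails to be $\Z$-orientable and the induced homomorphism $w_1:\pi_1(X)\to\Z_2$ is surjective.

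Next, I would form the orientation double cover $\pi:\tilde X\to X$ corresponding to $\ker w_1$. Since the Alexandrov curvature bound is local, $\tilde X$ inherits the structure of a closed Alexandrov $3$-space with a CW structure, and it is $\Z$-orientable by construction. The standard transfer argument gives that $\pi^{*}:H^{*}(X;\Q)\hookrightarrow H^{*}(\tilde X;\Q)$ is injective, so $H^{1}(\tilde X;\Q)\ne 0$.

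To finish, I would invoke rational Poincaré duality on $\tilde X$. Every closed Alexandrov $3$-space is a $\Q$-homology manifold: at a singular point $p$ the only possibility is $\varSigma_p=\R P^2$, and an excision on a conical neighborhood yields $H_{*}(X,X\setminus\{p\};\Q)\cong\tilde H_{*-1}(\R P^2;\Q)=0$, matching the local $\Q$-homology at a manifold point. Combined with $\Q$-orientability of $\tilde X$, this upgrades the top-dimensional statement \theoref{mit5.1} to the non-singular pairing
\[
H^{1}(\tilde X;\Q)\otimes H^{2}(\tilde X;\Q)\longrightarrow H^{3}(\tilde X;\Q)\cong\Q.
\]
Any non-zero $a\in H^{1}(\tilde X;\Q)$ then admits $b\in H^{2}(\tilde X;\Q)$ with $a\smile b\ne 0$, so $c\ell_{\Q}(\tilde X)\ge 2$ and hence $\dcat(\tilde X)\ge 2$. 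Applying \theoref{covv} to $\pi$ yields $\dcat(X)\ge\dcat(\tilde X)\ge 2$, as required.

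The main obstacle will be writing up the step upgrading \theoref{mit5.1} to full $\Q$-Poincaré duality on $\tilde X$: the cited result only gives the top-dimensional isomorphism, so one must argue using the $\Q$-homology-manifold property, which itself rests on the classification of tangent cones at singular points in dimension $3$. A secondary technical point is justifying that the orientation double cover of an Alexandrov space exists and inherits the Alexandrov and CW structures; this is standard but worth confirming, since covers of Alexandrov spaces are not set up explicitly earlier in the paper.
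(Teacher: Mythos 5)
Your overall strategy (cover $X$ by something with $\Q$-cup-length $\ge 2$, then apply Theorem~\ref{covv}) is the same as the paper's, but the step where you extract cup-length $\ge 2$ has a genuine gap. You assert that every closed Alexandrov $3$-space is a $\Q$-homology manifold because at a singular point $p$ one has $H_{*}(X,X\setminus\{p\};\Q)\cong\tilde H_{*-1}(\R P^2;\Q)=0$, and you say this ``matches the local $\Q$-homology at a manifold point.'' It does not: at a manifold point of a $3$-manifold the local $\Q$-homology is $\Q$ concentrated in degree $3$, whereas at a singular point with link $\R P^2$ it vanishes in every degree. So a closed Alexandrov $3$-space with singular points fails to be a $\Q$-homology manifold, and the nonsingular pairing $H^1\otimes H^2\to H^3$ you invoke is exactly what can break. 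Indeed, \remref{difficult} of this paper records that ``Poincar\'e duality in middle degrees may not hold'' for closed non-manifold Alexandrov spaces (citing Mitsuishi's Theorem 1.2 and Remark 5.2). Theorem~\ref{mit5.1} only gives the top-degree isomorphism and cannot be upgraded in the way you propose. Note also that passing to the orientation double cover $\tilde X$ does not repair this: a small conical neighborhood of a singular point is contractible, so it lifts to two disjoint copies in $\tilde X$, and $\tilde X$ retains singular points and the same duality defect.

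The paper sidesteps Poincar\'e duality on the Alexandrov space entirely. Writing $X=Y\#(S^2\hspace{-0.5mm}\propto\hspace{-0.5mm}S^1)$, it constructs the explicit double cover $Z=Y\#Y\#(S^2\times S^1)$ of $X$ (unwinding only the $S^1$-direction of the twisted $S^2$-bundle), and then applies the Mayer--Vietoris cup-length estimate from~\cite[Remark 8.2]{J2}, which shows that a connected summand's $\Q$-cup-length bounds the cup-length of the whole sum from below. Since $c\ell_{\Q}(S^2\times S^1)=2$, this gives $c\ell_{\Q}(Z)\ge 2$ directly and hence $\dcat(Z)\ge 2$, without any claim about Poincar\'e duality or $\Q$-homology-manifold structure for $Z$. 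If you want to salvage your write-up, replace the duality step with this summand cup-length argument on a suitable double cover that exhibits $S^2\times S^1$ as a visible connected summand.
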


\begin{proof}
    We use the above notations. By our assumption, $l \ge 1$. Thus, we can write $X = Y \# (S^2\hspace{-0.5mm}\propto\hspace{-0.5mm} S^1)$, where 
    \[
    Y =\wh{M_1} \#\cdots\#\wh{M_n}\#(l-1)(S^2\hspace{-0.5mm}\propto\hspace{-0.5mm} S^1).
    \]
    Since $S^2\times S^1$ is the connected orientable double cover of $S^2\hspace{-0.5mm}\propto\hspace{-0.5mm} S^1$, the space $Z = Y\#Y\#(S^2\times S^1)$ is a cover of $X$. Since $X$ is a CW complex, so is its cover $Z$. Thus, $(Z,S^2)$ is a good pair in the sense of~\cite{Ha}. So, we can proceed using the Mayer--Vietoris sequence for cohomology of the pair $(Z,S^2)$ along the lines of~\cite[Remark 8.2]{J2} to get 
    \[
    2=c\ell_{\Q}(S^2\times S^1) \le c\ell_{\Q}(Z) \le \dcat(Z).
    \]
    Finally, we use Theorem~\ref{covv} to conclude that $2 \le \dcat(Z)\le \dcat(X)$.
\end{proof}

\begin{remark}\label{difficult}
    While the classical LS-category of closed $3$-manifolds is completely known ~\cite{GLGA},~\cite{OR}, it is difficult to describe the $\cat$ of closed Alexandrov $3$-spaces completely. We list some reasons why the techniques used for determining $\cat$ of closed $3$-manifolds do not work for closed Alexandrov $3$-spaces in general.
    \vspace{-2mm}
\begin{itemize}
                    \itemsep-0.3em
    \item In the case of closed non-manifold Alexandrov $n$-spaces, Poincaré duality in middle degrees may not hold, see~\cite[Theorem 1.2 and Remark 5.2]{Mit}. If $M$ is a closed $n$-manifold for $n\ge 3$ such that $\pi_1(M)$ is not free, then $\cat(M) \ge 3$ due to~\cite[Theorem 4.1]{DKR}. Also, if $M$ is a closed $3$-manifold such that $\pi_1(M)$ is infinite and $\pi_2(M) = 0$, then $M$ is aspherical (see~\cite[Proposition 2.6]{OR}). Since the proofs of these results use Poincaré duality in middle degrees, they do not work if $M$ is a non-manifold Alexandrov space.

    \item A suitable analog of the ``projective plane theorem" for closed $3$-manifolds (see~\cite[Theorem 4.12]{He}) is not known for closed Alexandrov $3$-spaces in general. Thus, the proofs of~\cite[Lemma 3.1]{GLGA} and~\cite[Proposition 4.5]{OR} do not work for the non-manifold cases.

    \item In contrast to the $3$-manifold case, there are infinitely many closed non-irreducible prime Alexandrov $3$-spaces due to~\cite[Theorem D]{decomposition}. So, the proofs  of~\cite[Theorem 4.1]{GLGA} and~\cite[Proposition 5.1]{OR} do not work for general Alexandrov $3$-spaces $M$ to give $\cat(M) \le 2$ when $\pi_1(M)$ is free.
\end{itemize}
    \vspace{-1.5mm}
Due to these reasons, the subsequent results in~\cite{GLGA} and~\cite{OR} are not obtained, and thus, the LS-category of closed non-manifold Alexandrov $3$-spaces cannot be determined completely using just their fundamental groups.
\end{remark}

\subsection{Using curvature and cohomogeneity}\label{vsimple}

In view of Remark~\ref{difficult}, computing $\dcat$ of closed non-manifold Alexandrov $3$-spaces is even more difficult. In fact, $\dcat$ has not been determined completely even in the manifold case (see Section~\ref{threemani}). So, in this section, using some well-known results in dimension $3$, we describe up to homeomorphism closed non-manifold Alexandrov $3$-spaces $X$ for which $\cat(X)>1$ and $\dcat(X) > 1$ when $X$ is non-negatively curved or when a non-trivial compact Lie group acts nicely on $X$.

\begin{theorem}[\protect{\cite[Theorem 1.3]{classification}}]\label{curv}
    Let $X$ be a closed non-negatively curved Alexandrov $3$-space. If $X$ is not a manifold, then either
    \vspace{-2mm}
    \begin{enumerate}
                        \itemsep-0.35em
        \item $X$ is homeomorphic to $\s(\R P^2)$ or $\hspace{0.3mm}\s(\R P^2)\#\s(\R P^2)$, or
        \item $X$ is isometric to a quotient of a closed orientable flat $3$-manifold by an orientation-reversing isometric involution with only isolated fixed points.
    \end{enumerate}
\end{theorem}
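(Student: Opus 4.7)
The plan is to reduce the classification to the well-known classification of closed non-negatively curved Riemannian $3$-manifolds, via a ramified orientable double cover. First, I would analyze the singular set of $X$. By Perelman's codimension-$3$ theorem, the singular locus in a closed Alexandrov $3$-space is a finite set of points $\{p_1,\ldots,p_k\}$, and in dimension $3$ the only possible non-spherical space of directions is $\Sigma_{p_i}=\R P^2$. Hence the tangent cone $K_{p_i}$ is isometric to the Euclidean cone over $\R P^2$, which is doubly covered by the Euclidean cone over $S^2$, i.e.\ by flat $\R^3$.

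Next, I would build a ramified double cover $\pi:\widetilde X\to X$ branched exactly over $\{p_1,\ldots,p_k\}$. Since a punctured neighborhood of each $p_i$ is homeomorphic to $\R P^2\times(0,\eps)$, the open manifold $X\smallsetminus\{p_1,\ldots,p_k\}$ is a non-orientable Alexandrov $3$-manifold and so admits a connected orientation double cover; filling in one branch point above each $p_i$ produces $\widetilde X$. Since the local model at each added point is flat $\R^3$, the space $\widetilde X$ is a closed Alexandrov $3$-manifold of non-negative curvature, and the deck transformation is an isometric involution $\tau$ whose fixed set $\Fix(\tau)=\pi^{-1}(\{p_1,\ldots,p_k\})$ is finite and at which $\tau$ acts by the antipodal map on the local $\R^3$-slice, hence reverses orientation.

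Now I would apply the classification of closed non-negatively curved Riemannian $3$-manifolds (Hamilton's theorem in the strictly positive Ricci case, combined with the Cheeger--Gromoll splitting theorem and Bieberbach's classification of closed flat $3$-manifolds): $\widetilde X$ is, up to a possible finite cover, either a spherical space form $S^3/\Gamma$, a quotient of $S^2\times\R$ (so an $S^2$-bundle over $S^1$ or its orbifold relatives), or a closed flat $3$-manifold. For each model I would enumerate those orientation-reversing isometric involutions whose fixed set is zero-dimensional and pass to the quotient. In the case $\widetilde X=S^3$, a suspension-type involution recovers $X=\s(\R P^2)$; a suitable antipodal-plus-half-turn involution on $S^2\times S^1$ recovers $\s(\R P^2)\#\s(\R P^2)$; the flat cases give exactly item (2) of the theorem.

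The main obstacle will be the final enumeration step. One has to show that no non-trivial spherical space form $S^3/\Gamma$ with $\Gamma\ne1$ admits an orientation-reversing isometric involution whose fixed set is finite and non-empty, and that among $S^2\times S^1$-type manifolds the only such quotient is $\s(\R P^2)\#\s(\R P^2)$. This will require a careful case-by-case analysis of the isometry groups of each model geometry (using the explicit description of $\mathrm{Isom}(S^3)$, $\mathrm{Isom}(S^2\times\R)$, and the holonomy of flat $3$-manifolds), together with local computations at each candidate fixed point to rule out positive-dimensional fixed components, which would lead to a manifold quotient rather than an Alexandrov space with genuinely singular points.
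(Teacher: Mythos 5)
This statement is not proved in the paper at hand: it is quoted verbatim from Galaz-Garc\'ia and Guijarro, \cite[Theorem 1.3]{classification}, and the present paper simply cites it. So there is no ``paper's own proof'' to compare against; the right benchmark is the proof in the cited source. With that understanding, your proposal is essentially the correct strategy and, in outline, matches the argument of Galaz-Garc\'ia and Guijarro: the finitely many topologically singular points all have $\varSigma_p=\R P^2$, one passes to the ramified orientable double cover $\widetilde X\to X$ branched at the singular set, observes that the deck involution is isometric, orientation-reversing, and has isolated fixed points, classifies $\widetilde X$, and then classifies the admissible involutions on each model. So this is not a ``genuinely different route.''

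That said, two steps in your sketch carry more weight than you give them credit for, and a reader would not regard them as routine. First, the claim that the ramified double cover $\widetilde X$ again carries an Alexandrov metric with the same lower curvature bound is not a soft topological fact: the double cover is only defined a priori as a length space, and one must verify the comparison inequalities near the added branch points (where the cover is a two-to-one map $\R^3\to\R^3/\{\pm\mathrm{id}\}$ ramified at the origin). This is a genuine lemma in the cited paper and deserves to be stated and justified rather than folded into ``filling in one branch point.'' Second, your appeal to ``the classification of closed non-negatively curved Riemannian $3$-manifolds (Hamilton $+$ Cheeger--Gromoll $+$ Bieberbach)'' does not directly apply, because $\widetilde X$ is a non-negatively curved \emph{Alexandrov} $3$-manifold, which need not be given by a smooth Riemannian metric. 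Reducing the Alexandrov case to the smooth classification is itself a theorem (and in the cited paper it is handled separately, ultimately resting on geometrization); you should either quote such a result or explain how to smooth the metric. The third gap, the case-by-case enumeration of orientation-reversing isometric involutions with finite fixed set on each model (ruling out nontrivial spherical space forms $S^3/\Gamma$ and the non-$S^2\times S^1$ members of the $S^2\times\mathbb{E}$ family), you already acknowledge; it is exactly where the concrete content of items (1) and (2) comes from and cannot be omitted.
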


Before we make our observation, we describe some flat $3$-manifolds\hspace{0.5mm}\footnote{\hspace{0.5mm}As in~\cite{flat}, we use the notations $\M_i$ and $\iota_j$ for manifolds and their involutions, respectively.}.

Let $S^1 =\{z\in\C\mid |z|=1\}$ be the unit circle and $T^2 = S^1\times S^1$ be the $2$-torus. Define $\M_2 = T^2 \times [-1,1]/\sim$, where $(z_1,z_2,1)\sim (\ov{z_1},\ov{z_2},-1)$. Then $\M_2$ is a $T^2$-bundle over $S^1$. It is a closed orientable flat $3$-manifold with holonomy group $\Z_2$. Let us define $\iota_4:\M_2\to\M_2$ as 
\[
\iota_4([z_1,z_2,t]) = [-\ov{z_1},\ov{z_2},-t].
\]
It is an orientation-reversing isometric involution having four isolated fixed points.

Let $W = S^1\times [0,1] \times [-1,1]/\sim$, where $(z,0,t)\sim(\ov{z},1,-t)$. So, $W$ is an orientable twisted interval-bundle over a Klein bottle. 
Let $W_1$ and $W_2$ be two copies of $W$, and let $g:\pa W_1 \to \pa W_2$ be the homeomorphism $g([z_1,z_2,1])=[z_2,z_1,1]$. Define $\M_6 = (W_1 \sqcup W_2)/\approx$, where $x\approx g(x)$ for all $x \in\pa W_1$. Then $\M_6$ is the closed orientable Hantzsche--Wendt $3$-manifold discussed in Example~\ref{hwmani}. Define a map $\iota_2:\M_6\to\M_6$ such that 
\[
    \iota_2([z_1,z_2,t])=\begin{cases}
    [-\ov{z_1},-z_2,t] & \text{ if } [z_1,z_2,t]\in W_1 
    \\
    [-z_1,-\ov{z_2},t] & \text{ if } [z_1,z_2,t]\in W_2. 
\end{cases}
\]
Then, $\iota_2$ is an orientation-reversing isometric involution having two isolated fixed points. 

\begin{prop}\label{simple1}
    Let $X$ be a closed non-negatively curved Alexandrov $3$-space that is not a manifold. If $\cat(X)>1$, then either $X=\M_2/\iota_4$ or $X=\M_6/\iota_2$.
\end{prop}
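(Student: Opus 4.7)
The plan is to systematically eliminate cases from the classification provided by Theorem~\ref{curv}. The space $\s(\R P^2)$ is simply connected, being the unreduced suspension of a connected space. For $\s(\R P^2)\#\s(\R P^2)$, removing a conical neighborhood of a suspension vertex from each copy of $\s(\R P^2)$ leaves a contractible piece (namely, the cone on $\R P^2$) with boundary $\R P^2$; gluing two such pieces along $\R P^2$ and applying Seifert--van Kampen gives that $\pi_1(\s(\R P^2)\#\s(\R P^2))$ is trivial. Since both are $3$-dimensional simply connected CW complexes, the observation recorded at the beginning of Section~\ref{simpobs} gives $\cat \le 1$, contradicting the assumption $\cat(X) > 1$. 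Hence $X$ must come from the second alternative of Theorem~\ref{curv}, so $X = \mathcal{M}/\iota$ for some closed orientable flat $3$-manifold $\mathcal{M}$ and some orientation-reversing isometric involution $\iota$ with only isolated fixed points.

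From here, I would appeal to the explicit classification of such pairs $(\mathcal{M},\iota)$, up to equivariant isometry, on each of the six closed orientable flat $3$-manifolds (this is the classification referenced in the footnote). The strategy is to go through this finite list and, for every pair $(\mathcal{M},\iota)$ other than $(\M_2,\iota_4)$ and $(\M_6,\iota_2)$, show that the quotient $\mathcal{M}/\iota$ is simply connected; the guiding model is the classical homeomorphism $T^3/(x\mapsto -x)\cong S^3$, and the expectation is that the remaining involutions likewise produce quotients homeomorphic to $S^3$ (or other simply connected spaces). Such quotients then have $\cat \le 1$ by the same observation from Section~\ref{simpobs} and are therefore excluded. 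Only $\M_2/\iota_4$ and $\M_6/\iota_2$ survive with nontrivial fundamental group, which can be verified from the explicit descriptions of $\iota_4$ and $\iota_2$ given earlier in Section~\ref{vsimple}.

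The main obstacle is the case-by-case identification of the topological type of each quotient $\mathcal{M}/\iota$. Because the local model near each isolated fixed point is the cone on $\R P^2$, these quotients sit on the boundary between manifold and non-manifold behavior, and distinguishing the simply connected ones from the two exceptional quotients requires either an explicit fundamental-domain computation or a clean invocation of the classification cited in the footnote. I would prefer the latter to keep the proof short, directly extracting from that classification that the non-excluded quotients are exactly $\M_2/\iota_4$ and $\M_6/\iota_2$.
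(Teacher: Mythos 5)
Your proposal is correct and follows essentially the same route as the paper: eliminate the two simply connected cases from Theorem~\ref{curv}, reduce to a flat orientable $3$-manifold modulo an orientation-reversing isometric involution with isolated fixed points, invoke the Luft--Sjerve classification (which in fact yields exactly the three pairs $(T^3,\iota_8)$, $(\M_2,\iota_4)$, $(\M_6,\iota_2)$), and discard $T^3/\iota_8$ as simply connected.
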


\begin{proof}
Since $\s(\R P^2)$ and $\s(\R P^2)\#\s(\R P^2)$ are simply connected CW complexes, their LS-category is $1$. So, by Theorem~\ref{curv}, there must exist a closed orientable flat $3$-manifold $M$ and an orientation-reversing isometric involution $\iota_M$ with isolated fixed points such that $X =M/\iota_M$. Due to Luft and Sjerve~\cite{flat}, the only possibilities are $M\in\{\M_2,\M_6,T^3\}$ and $\iota_{\M_2} = \iota_4$, $\iota_{\M_6} = \iota_2$, or $\iota_{T^3} = \iota_8$, where $\iota_8:T^3\to T^3$ defined as the involution
\[
\iota_8(z_1,z_2,z_3) =(\ov{z_1},\ov{z_2},\ov{z_3})
\]
which has eight fixed points. By~\cite[Page 9]{classification}, $T^3/\iota_8$ is a simply connected CW complex. Hence, $\cat(T^3/\iota_8)=1$ and thus, $X \ne T^3/\iota_8$. Therefore, we conclude that either $X=\M_2/\iota_4$ or $X=\M_6/\iota_2$.
\end{proof}

\begin{remark}
    We note that by~\cite[Section 2.2.1 and Lemma 5.6]{decomposition}, both $\M_2/\iota_4$ and $\M_6/\iota_2$ are not simply connected. So, our result in Proposition~\ref{simple1} is optimal in some sense. However, we do not know whether the converse statement of Proposition~\ref{simple1} is true!
\end{remark}

For a closed Alexandrov $3$-space $X$, let $\text{Iso}(X)$ denote the compact Lie group of isometries of $X$. Then the dimension of the orbit space $X/\text{Iso}(X)$ is called the \emph{cohomogeneity of the action}. We denote it by $c(X)$.  Clearly, $c(X) \in \{0,1,2,3\}$. Let $G$ be a compact Lie group acting isometrically on $X$. If $G\ne 0$ acts effectively on $X$, then $c(X)\ne 3$.

\begin{theorem}[\protect{\cite[Theorem 1.2]{NZ}}]\label{cohomo}
    Let $X$ be a closed Alexandrov $3$-space with an effective and isometric action of the circle group $S^1$. Then $X$ has $2r$ number of singular points for some $r\ge 0$ and $X$ is weakly equivariantly homeomorphic to 
    \[
    \#_{i=1}^r \s(\R P^2)\#\mathscr M,
    \]
    where $\mathscr M$ is the closed manifold of dimension $3$ described by the set of invariants $(b;(\epsilon,g,f+s,t);\{(\alpha_i,\beta_i)\}_{i=1}^n)$.
\end{theorem}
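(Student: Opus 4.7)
The plan is to reduce the theorem to the classical equivariant classification of $S^1$-actions on closed $3$-manifolds, due to Raymond and Orlik--Raymond, by equivariantly excising neighborhoods of the singular points of $X$. The first step is to locate the singular points with respect to the action. Since $X$ is $3$-dimensional, every singular point $p$ has space of directions $\varSigma_p = \R P^2$. By Perelman's theorem, there are only finitely many singular points, so the $S^1$-orbit through any singular point is $0$-dimensional; hence every singular point is a fixed point of $S^1$. The induced isometric $S^1$-action on $\varSigma_p = \R P^2$ lifts to a rotation on the double cover $S^2$, so its fixed-point structure is completely understood, and in particular the tangent cone $K_p$ comes with a canonical $S^1$-action.

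The second step is the equivariant excision. Using Perelman's conical neighborhood theorem in an equivariant form, I would choose for each singular point a small $S^1$-invariant conical neighborhood $U_p \cong K_p$ bounded by $\varSigma_p = \R P^2$. The key structural claim to establish is that singular points are naturally paired: any arc of fixed points of $S^1$ meeting a singular point must either terminate at a second singular point or form a loop, but the orientation-type analysis of the $S^1$-action on $\R P^2$ forces pairing. Each such pair, together with the $S^1$-invariant tubular neighborhood containing them, splits off a copy of $\s(\R P^2)$ via a standard connected-sum decomposition, contributing one $\s(\R P^2)$ summand per pair. This yields the $r$ summands and forces the total number of singular points to be $2r$.

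The third step is to apply the known classification. After removing these $r$ invariant bubbles, we obtain a closed Alexandrov $3$-space $\mathscr{M}$ with no singular points, i.e. a closed $3$-manifold, on which $S^1$ still acts effectively and isometrically. By the classical theorem of Raymond (and its refinement by Orlik--Raymond in the non-orientable case), $\mathscr{M}$ is equivariantly classified up to weak equivariant homeomorphism by a system of invariants of the orbit space, which can be packaged as $(b;(\epsilon,g,f+s,t);\{(\alpha_i,\beta_i)\}_{i=1}^n)$: here $b$ records the Euler number of the principal orbit fibration, $\epsilon$ its orientability type, $g$ the genus, $f$, $s$, $t$ the numbers of fixed-point components of various types (isolated, arc, and special orbit types on the boundary of the orbit surface), and the pairs $(\alpha_i,\beta_i)$ the Seifert invariants of the exceptional orbits. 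Combining this with the excision above yields the desired decomposition.

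The main obstacle is the pairing of singular points in the second step, together with showing that the pair determines a well-defined $\s(\R P^2)$ summand equivariantly. Concretely, one must verify that the $S^1$-action on the annular region $\s(\R P^2) \setminus \{\text{two cones}\} \cong \R P^2 \times \R$ extends consistently across both singular cone points, and that the gluing sphere separating $\s(\R P^2)$ from $\mathscr{M}$ can be chosen $S^1$-invariant and bounding an invariant $3$-disc in $\mathscr{M}$. This requires a careful local-to-global argument using the cone structure and the finiteness of singular points, and is where genuinely Alexandrov-geometric (rather than purely manifold-theoretic) input is needed.
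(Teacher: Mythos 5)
This theorem is quoted verbatim from \cite[Theorem~1.2]{NZ}, so the paper under review contains no proof of it to compare against. Your sketch does capture the broad strategy of the source: show that the topologically singular points (those with $\varSigma_p = \R P^2$) are isolated $S^1$-fixed points, deduce that they come in pairs, split off one $\s(\R P^2)$ summand per pair, and apply the Orlik--Raymond equivariant classification of effective $S^1$-actions on closed $3$-manifolds to the residual manifold $\mathscr M$, packaging its action data as $(b;(\epsilon,g,f+s,t);\{(\alpha_i,\beta_i)\})$.

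Two of your steps, however, are asserted rather than established. First, your pairing mechanism (``the orientation-type analysis of the $S^1$-action on $\R P^2$ forces pairing'') is vague and not quite the point; an arc of fixed points emanating from a singular point cannot ``form a loop'' since a loop has no endpoints. The clean argument is this: an effective isometric $S^1$-action on $\R P^2$ has exactly one fixed point (it lifts to a rotation of $S^2$ commuting with the antipodal map, whose two antipodal fixed points identify to one), so on the tangent cone $K_p$ at a singular point $p$ the $S^1$-fixed set is a single ray from the vertex, whereas at a regular fixed point $\varSigma_p=S^2$ and the fixed set is locally a full line through $p$. Hence the global $S^1$-fixed set in $X$ is a compact $1$-manifold with boundary whose boundary is precisely the set of topologically singular points; decomposing it into circles and arcs gives both the even count $2r$ and the pairing by arcs. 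Second, and more seriously, you explicitly flag as ``the main obstacle'' the claim that an invariant neighborhood of each such arc is equivariantly a punctured $\s(\R P^2)$, cut off along an invariant $2$-sphere bounding an invariant $3$-disc in the remainder, and you leave it open; without it the connected-sum decomposition is simply not proved. In \cite{NZ} this is resolved by working with the $2$-dimensional orbit space $X/S^1$ and its boundary and isotropy strata directly, rather than by ad hoc equivariant excision inside $X$, so if you wish to complete the argument you should either supply that excision lemma or switch to the orbit-space framework.
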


For the definitions of these invariants in terms of the action, see~\cite[Page 2]{NZ}. 

\begin{prop}\label{simple2}
Let $X$ be a closed non-manifold Alexandrov $3$-space with an effective and isometric action of a compact Lie group $G \ne 0$. If $\cat(X) > 1$, then $X = \#_{i=1}^r \s(\R P^2)\#\mathscr M$ for some $r\ge 1$ and the closed $3$-manifold $\mathscr M$ above.
\end{prop}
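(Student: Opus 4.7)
The plan is to extract an effective isometric $S^1$-action on $X$ from the given action and then invoke Theorem~\ref{cohomo}. First, I would argue that $G$ must be positive-dimensional: a finite effective $G$-action has zero-dimensional orbits and hence yields $c(X) = \dim X = 3$, contradicting the fact recalled just before Theorem~\ref{cohomo} that $G \ne 0$ effective implies $c(X) \ne 3$. Moreover, the identity component $G_0$ must act non-trivially on $X$, for otherwise the effective $G$-action would factor through the finite quotient $G/G_0$, forcing $c(X) = 3$ once again. Thus $H := G_0/\ker(G_0\text{-action})$ is a positive-dimensional, compact, connected Lie group acting effectively and isometrically on $X$; any circle subgroup of $H$ (obtained from a maximal torus, which is positive-dimensional) then inherits an effective isometric action on $X$, since any subgroup of an effectively acting group still acts effectively.

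Second, I would apply Theorem~\ref{cohomo} to this $S^1$-action: it produces a weak equivariant homeomorphism $X \cong \#_{i=1}^r \s(\R P^2) \# \mathscr M$ for some $r \ge 0$ and a closed $3$-manifold $\mathscr M$ described by the invariants $(b;(\epsilon,g,f+s,t);\{(\alpha_i,\beta_i)\}_{i=1}^n)$ listed there. Since $X$ is by hypothesis not a manifold while any connected sum of closed $3$-manifolds is again a $3$-manifold, at least one $\s(\R P^2)$ summand must be present, forcing $r \ge 1$ and delivering the desired decomposition.

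The main obstacle is the clean reduction from the given $G$-action to an effective isometric $S^1$-action, for which the cohomogeneity dichotomy preceding Theorem~\ref{cohomo} is the key input. The hypothesis $\cat(X) > 1$ does not appear to enter the structural decomposition itself; it likely serves the same exclusionary role as in Proposition~\ref{simple1}, ruling out the degenerate simply connected cases such as $\s(\R P^2)$ whose LS-category equals $1$.
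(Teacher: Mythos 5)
Your proof is correct, but it takes a genuinely different route from the paper's. The paper runs the argument through the cohomogeneity dichotomy: from $X$ non-manifold it gets $c(X) \ne 0$, from the effective action of $G \ne 0$ it gets $c(X) \ne 3$, so $c(X) \in \{1,2\}$; it then uses the hypothesis $\cat(X) > 1$ to rule out $c(X) = 1$ (for then $X = \s(\R P^2)$ by \cite[Theorem B]{GGS}, which is simply connected and hence has $\cat = 1$), forcing $c(X) = 2$, whence an effective isometric $S^1$-action and an appeal to Theorem~\ref{cohomo}. You instead bypass the cohomogeneity analysis entirely: a circle subgroup of $G_0$ already gives the required effective isometric $S^1$-action, Theorem~\ref{cohomo} applies directly, and $r \ge 1$ follows from $X$ not being a manifold. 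Your approach is leaner (it needs only Theorem~\ref{cohomo}, not the Galaz-Garc\'ia--Searle classification), and you are right that it shows the hypothesis $\cat(X) > 1$ is not logically necessary for the stated conclusion: $\s(\R P^2) = \s(\R P^2) \# S^3$ is itself of the form $\#_{i=1}^r \s(\R P^2) \# \mathscr M$ with $r = 1$. The author evidently keeps $\cat(X) > 1$ to mirror Proposition~\ref{simple1} and to exclude the simply connected degenerate case.

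One small inaccuracy in your write-up: the justification that $G$ is positive-dimensional is off. You claim that a finite effective $G$-action ``yields $c(X) = \dim X = 3$,'' but $c(X)$ is defined via $\textup{Iso}(X)$, not via $G$; a finite $G$ is compatible with $\textup{Iso}(X)$ being positive-dimensional and $c(X) < 3$, so no contradiction would arise. The cleaner reading is simply that ``$G \ne 0$'' in this section already means $G$ has positive dimension (this is also needed for the paper's own quoted fact ``$G \ne 0$ effective implies $c(X) \ne 3$'' to be true), at which point your maximal-torus extraction goes through and the rest of your argument is fine.
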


\begin{proof}
    Since $X$ is not a manifold, $c(X)\ne 0$. Since $G \ne 0$, the cohomogeneity cannot be $3$, i.e., $c(X)\ne 3$. So, $c(X) \in \{1,2\}$. Suppose $c(X) = 1$. Then $X = \s(\R P^2)$ due to~\cite[Theorem B]{GGS} and hence, $\cat(X) = 1$ because $\s(\R P^2)$ is simply connected. 
    This contradicts our hypothesis. Therefore, we must have $c(X)= 2$. This means the orbits are $1$-dimensional. Hence, $G = S^1$ by definition. So, we can use Theorem~\ref{cohomo} to get $X = \#_{i=1}^r \s(\R P^2)\#\mathscr M$ for some $r\ge 1$.
\end{proof}

\begin{remark}
    The statements of Propositions~\ref{simple1} and~\ref{simple2} hold true if the condition $\cat(X)>1$ is replaced by the condition $\dcat(X)>1$ since $\cat(X)\ge \dcat(X)$.
\end{remark}

We end this paper by giving two instances where the converse of Proposition~\ref{simple2} holds true.

\begin{ex}
    Let $X = \#_{i=1}^r \s(\R P^2)\#\mathscr M$ for some $r\ge 1$ as in Theorem~\ref{cohomo}. If $\mathscr M$ is aspherical, then we have $\dcat(X) = \cat(X) = 3$ by Theorem~\ref{5}. Next, let 
    \[
    X = \s(\R P^2)\#\s(\R P^2)\#(S^2\times S^1)
    \]
    with any one of the two effective and isometric $S^1$ actions described explicitly in~\cite[Example 5.3]{NZ}. It follows from the technique of the proof of Lemma~\ref{stupid} that $2 = c\ell_{\Q}(S^2\times S^1) \le \dcat(X) \le \cat(X)$.
\end{ex}

\section*{Acknowledgement}
The author would like to thank Alexander Dranishnikov for various insightful discussions related to this project. The author also thanks Luca Di Cerbo for helpful discussions about four-dimensional manifolds and symplectically aspherical manifolds, and John Oprea for discussions related to the formalism of the cap property. The author is grateful to the anonymous referee for several comments and suggestions that helped improve the exposition of this paper.

\end{document}